\documentclass[12pt,a4paper]{amsart}
\usepackage{geometry}
  \geometry{paperwidth=176mm, paperheight=250mm, textheight=189mm,
            tmargin=35mm, lmargin=30mm, rmargin=20mm, headsep=8mm,
            headheight=11.0pt, footskip=30pt, twoside=true}
\usepackage[colorlinks, linkcolor=red, anchorcolor=blue, citecolor=blue]{hyperref}
\usepackage{color}             
\usepackage{amsthm,bm} 
\usepackage{enumerate}
\usepackage{amsfonts}
\usepackage{amsmath}
\usepackage{mathrsfs}
\usepackage{amssymb}
\usepackage{amsbsy}
\usepackage{ifthen}
\usepackage[all]{xy}
\usepackage{extarrows}
\usepackage{indentfirst}        
\usepackage{latexsym}        

\usepackage{graphicx}
\usepackage{cases}
\usepackage{pifont}
\usepackage{txfonts}
\usepackage{xcolor}
\usepackage{multirow}
\usepackage{caption, subcaption}

\newcounter{maint}

\numberwithin{equation}{section}


\allowdisplaybreaks[4] 

%
%
%

\begin{document}

\newtheorem{theorem}{Theorem}[section]

\newtheorem{lemma}[theorem]{Lemma}

\newtheorem{corollary}[theorem]{Corollary}
\newtheorem{proposition}[theorem]{Proposition}

\theoremstyle{remark}
\newtheorem{remark}[theorem]{Remark}

\theoremstyle{definition}
\newtheorem{definition}[theorem]{Definition}

\theoremstyle{definition}
\newtheorem{conjecture}[theorem]{Conjecture}

\newtheorem{example}[theorem]{Example}
\newtheorem{problem}[theorem]{Problem}


\def\k{\Bbbk}
\def\I{\mathbb{I}}
\def\ufo{\mathfrak{ufo}}
\newcommand{\Dchaintwo}[4]{
\rule[-3\unitlength]{0pt}{8\unitlength}
\begin{picture}(14,5)(0,3)
\put(2,4){\ifthenelse{\equal{#1}{l}}{\circle*{4}}{\circle{4}}}
\put(4,4){\line(2,0){20}}
\put(26,4){\ifthenelse{\equal{#1}{r}}{\circle*{4}}{\circle{4}}}
\put(2,10){\makebox[0pt]{\scriptsize #2}}
\put(14,8){\makebox[0pt]{\scriptsize #3}}
\put(26,10){\makebox[0pt]{\scriptsize #4}}
\end{picture}}
\title[ Nichols algebras over  the Suzuki algebra 
\uppercase\expandafter{\romannumeral1}]{ Finite dimensional Nichols algebras over the Suzuki algebras  
\uppercase\expandafter{\romannumeral1}:  simple Yetter-Drinfeld modules of $A_{N\,2n}^{\mu\lambda}$}

\author[Shi]{Yuxing Shi }
\address{School of Mathematics and Statistics, Jiangxi Normal University,  Nanchang 330022, P. R. China}\email{yxshi@jxnu.edu.cn}

\subjclass[2010]{16T05, 16T25, 17B22}
\thanks{
\textit{Keywords:} Nichols algebra; Hopf algebra; Suzuki algebra; Yetter-Drinfeld module.
\\
This work was partially supported by
 Foundation of Jiangxi Educational Committee (No.12020447)
 and Natural Science Foundation of Jiangxi Normal University (No. 12018937).
}

\begin{abstract}
The Suzuki algebra $A_{Nn}^{\mu \lambda}$ was introduced by Suzuki Satoshi in 1998, which is 
a class of cosemisimple Hopf algebras. It is not categorically Morita-equivalent to a group algebra in general. In this paper, the author gives a complete set of simple Yetter-Drinfeld modules over the Suzuki algebra $A_{N\,2n}^{\mu\lambda}$  and investigates the Nichols algebras over those simple Yetter-Drinfeld modules. The involved finite dimensional Nichols algebras of 
diagonal type are of Cartan type $A_1$, $A_1\times A_1$, $A_2$, $A_2\times A_2$,
Super type 
${\bf A}_{2}(q;\I_2)$ and the Nichols algebra $\ufo(8)$. 
There are $64$, $4m$ and $m^2$-dimensional Nichols algebras of non-diagonal type
over $A_{N\,2n}^{\mu \lambda}$. 
The $64$-dimensional Nichols algebras are of dihedral rack type $\Bbb{D}_4$. 
The $4m$ and $m^2$-dimensional Nichols algebras $\mathfrak{B}(V_{abe})$  discovered first by Andruskiewitsch and Giraldi can 
be realized 
in the category of  Yetter-Drinfeld modules over $A_{Nn}^{\mu \lambda}$. By using a result of Masuoka, 
we prove that $\dim\mathfrak{B}(V_{abe})=\infty$ under the condition 
$b^2=(ae)^{-1}$,  $b\in\Bbb{G}_{m}$ for  $m\geq 5$.
\end{abstract}
\maketitle

\section{Introduction}

Let $\k$ be  an algebraicaly  closed field of characteristic $0$. The motivation of the paper 
is to make some contributions to the following project. 
\begin{problem}
How to classify all finite dimensional Hopf algebras over the Suzuki algebra 
$A_{Nn}^{\mu\lambda}$?
\end{problem}  
There are only a few  works to deal  with the problem. In 2004, Menini  and coauthors studied the quantum lines over $\mathcal{A}_{4m}$ and $\mathcal{B}_{4m}$, which are isomorphic to 
$A_{1m}^{++}$ and $A_{1m}^{+-}$ respectively \cite{MR2037722}. In 2019,  the author  \cite{Shi2019} classified finite dimensional Hopf algebras over the Kac-Paljutkin algebra $A_{12}^{+-} $
 and Fantino et al. \cite{Fantino2019} classified finite dimensional Hopf algebras over the dual of dihedral group $\k^{D_{2m}}$ of order $2m$, with  $m=4a\geq 12$, where $\k^{D_{2m}}$ is a $2$-cocycle deformation of 
$A_{1\,2a}^{++}$ \cite{MR1800713}. 

Why are we  interested in the Suzuki algebras? 
Firstly, our classification project is different with the classification of pointed Hopf algebras, 
since the Suzuki algebras are not categorically Morita-equivalent to  group algebras in general. 
The Suzuki algebras are non-trivial semisimple unless $(n,\lambda)=(2,+1)$. 
Two semisimple Hopf algebras $K$ and $H$ are categorically Morita-equivalent iff ${}_K^K\mathcal{YD}$ 
and ${}_H^H\mathcal{YD}$ are equivalent as braided tensor categories. 
In \cite{MR3677868}, Andruskiewitsch and coauthors constructed new Hopf algebras with semisimple Hopf algebras as coradicals 
 which are categorically Morita-equivalent to group algebras, including group-theoretical Hopf algebras, 
 in particular those from abelian extensions.  The Suzuki algebras can be obtained by 
 abelian extensions  \cite[Page 18]{Suzuki1998}, so they are group-theoretical \cite[Theorem 1.3]{Natale2003}.
But they are not categorically Morita-equivalent to group algebras in general, 
for example  $A_{12}^{+-}$ \cite[Section 5.2]{MR2386730}.  

Secondly, the study of Nichols algebras with non-group type braidings is rare and
it is an interesting problem to find new finite-dimensional Nichols algebras 
 over non-trivial semisimple Hopf algebras.
 In the past decades, the study of Nichols algebras are mainly focus on the Yetter-Drinfeld categories of group algebras.  In \cite[section 3.7]{Andruskiewitsch2018}, Andruskiewitsch
 and Giraldi found two classes of $4m$ and $m^2$-dimensional Nichols algebras (we call those Nichols algebras are of type $V_{abe}$) which generally 
 cannot be realized in the  Yetter-Drinfeld categories  of  group algebras. Together with the results of \cite{Shi2020odd}, we will see that those Nichols algebras  
 can be realized in the    Yetter-Drinfeld category of  $A_{Nn}^{\mu\lambda}$.
 
 Thirdly,  it is meaningful to provide many examples of Yetter-Drinfeld modules with 
 non-group type braidings. The classification of finite dimensional Nichols algebras of group type  has archived great success. For examples, Nichols algebras of diagonal type 
with finite dimension were classified completely  by Heckenberger \cite{heckenberger2009classification}
based on the theory of reflections  \cite{Heckenberger[2020]copyright2020}  and  Weyl groupoid \cite{MR2207786}; the classification of Nichols 
algebras of non-simple semisimple Yetter-Drinfeld modules over non-abelian groups were almost 
finished by  Heckenberger and  Vendramin \cite{Heckenberger2017} \cite{MR3605018}. 
To generalize the method of group type to non-group type, one obstacle is to realize  braidings 
of non-group type
in Yetter-Drinfeld categories.  In case of group type, 
Andruskiewitsch and Gra\~{n}a \cite{Andruskiewitsch2003MR1994219} built connections between  Nichols algebras of group-type and racks. It is convenient to realize braidings of rack type in 
Yetter-Drinfeld categories of finite groups. For this reason, the theory of reflections was also 
used to prove that the Nichols algebras of rack type $D$ is infinite dimensional \cite{Andruskiewitsch2011}.
Let $(V,c)$ be a rigid braided vector space.
It was shown by Schauenburg that $(V, c)$ can be realized on a coquasitriangular Hopf algebra 
$(H, \sigma)$ as a right $H$-comodule and $c$ arising from $\sigma$ \cite{Schauenburg1992} \cite{Takeuchi2000}. 
The realization is complicated 
in general. For example, the Suzuki algebras give a realization of $(V_{abe},c)$ \cite{Suzuki1998}.
As for the realization of non-group type braidings is not easy,  it is meaningful to provide examples 
for observation. 

Our classification project over the Suzuki algebras is based on the \textit{lifting method} 
introduced by Andruskiewitsch and Schneider \cite{MR1659895}. The lifting method is  a general framework  to classify finite dimensional non-semisimple Hopf algebras with a 
fixed sub-Hopf algebra  $H$ as coradical.  One crucial step of the lifting method 
is to find out all  Yetter-Drinfield module $V$ over $H$ such that the Nichols algebra $\mathfrak{B}(V)$ has finite dimension. 
In this paper, we 
deal with the Nichols algebras over simple Yetter-Drinfeld modules of $A_{Nn}^{\mu\lambda}$ with 
$n$ even. 
And in the sequel \cite{Shi2020odd}, we will study the case with $n$ odd. 

To investigate Nichols algebras, first we should know how to construct all 
Yetter-Drinfeld modules over a finite dimensional  Hopf algebra. Majid \cite{Majid1991}
identified the Yetter-Drinfeld modules 
with the modules of the Drinfeld double via the category equivalence 
${}_H^H\mathcal{YD}\simeq {}_{H^{cop}}\mathcal{YD}^{H^{cop}}
\simeq {}_{D(H^{cop})}\mathcal{M}$. 
And many mathematicians have contributed to the construction of Yetter-Drinfeld modules, for example
\cite{gould1993quantum}
\cite{Burciu2017}
\cite{Liu2019} \cite{radford2003oriented} \cite{MR2352888} \cite{MR2336009}.
We take Radford's method. 
There are exactly 
$8N^2$ one-dimensional, $2N^2(4n^2-1)$ two-dimensional and $8N^2$ $2n$-dimensional 
non-isomorphic Yetter-Drinfeld modules over $A_{N\,2n}^{\mu\lambda}$, see the Theorem
\ref{MainYDM}. 


The involved  Nichols algebras in the paper are of diagonal type, rack type,  type $V_{abe}$, 
and the Nichols algebra $\mathfrak{B}\left(\mathscr{K}_{jk,p}^{s}\right)$. 
The finite dimensional Nichols algebras of diagonal type over simple Yetter-Drinfeld modules of 
$A_{N\,2n}^{\mu\lambda}$ are of  Cartan type $A_1$, $A_1\times A_1$, $A_2$, $A_2\times A_2$,
 Super type 
${\bf A}_{2}(q;\I_2)$ and the Nichols algebra $\ufo(8)$. As a summary, we have the following  theorem. 
\begin{theorem}\label{MainTheorem}
Let $M$ be a simple Yetter-Drinfeld module over $A_{N\,2n}^{\mu\lambda}$. If 
$\mathfrak{B}(M)$ is of diagonal type and $\dim \mathfrak{B}(M)<\infty$, then $\mathfrak{B}(M)$
can be summarized  as follows. 
\begin{enumerate}
\item Cartan type $A_1$, see Lemmas \ref{Nichols_A} and \ref{Nichols_barA};
\item Cartan type $A_1\times A_1$, see Lemmas \ref{Nichols_B}, \ref{Nichols_C}, \ref{Nichols_D},
          \ref{Nichols_E}, \ref{Nichols_P}, \ref{Nichols_I}, \ref{Nichols_K}, \ref{Nichols_G}
           and \ref{Nichols_H};
\item Cartan type $A_2$, see Lemmas \ref{Nichols_B}, \ref{Nichols_C}, \ref{Nichols_D},
          \ref{Nichols_E}, \ref{Nichols_P}, \ref{Nichols_I}, \ref{Nichols_K}, \ref{Nichols_G}
           and \ref{Nichols_H};
\item  Cartan type $A_2\times A_2$, see Lemmas \ref{Nichols_I} and \ref{Nichols_K};      
\item Super type ${\bf A}_{2}(q;\I_2)$, see Lemmas \ref{Nichols_D} and \ref{Nichols_E};
\item The Nichols algebra $\ufo(8)$,  see Lemmas \ref{Nichols_D} and \ref{Nichols_E}.
\end{enumerate}
\end{theorem}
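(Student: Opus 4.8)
The plan is to reduce the statement to a finite bookkeeping problem governed by two inputs: the complete classification of simple Yetter-Drinfeld modules recorded in Theorem \ref{MainYDM}, and Heckenberger's classification of finite dimensional Nichols algebras of diagonal type \cite{heckenberger2009classification}. By Theorem \ref{MainYDM} every simple object of $\ydh$ (with $H=A_{N\,2n}^{\mu\lambda}$) is one-, two-, or $2n$-dimensional, and comes equipped with an explicit basis together with its $H$-action and $H$-coaction. First I would, for a representative $M$ in each family, compute the braiding $c(x\otimes y)=x_{(-1)}\cdot y\otimes x_{(0)}$ on $M\otimes M$ directly from these data, recording the resulting matrix of structure constants.

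The second step is to decide, family by family, exactly when $(M,c)$ is of diagonal type, i.e.\ when $M$ admits a basis $\{x_i\}$ of eigenvectors with $c(x_i\otimes x_j)=q_{ij}\,x_j\otimes x_i$, and then to read off the braiding matrix $(q_{ij})$ and its generalized Dynkin diagram. For the one-dimensional modules this is automatic and the self-braiding is a single scalar $q$; finite dimensionality of $\mathfrak{B}(M)$ then forces $q$ to be a root of unity different from $1$, which is precisely Cartan type $A_1$ (Lemmas \ref{Nichols_A} and \ref{Nichols_barA}). For the remaining modules the braiding is genuinely of higher rank, so diagonalizability must be checked by hand and the free parameters $N$, $n$, $\mu$, $\lambda$ together with the chosen roots of unity tracked carefully.

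Having obtained each generalized Dynkin diagram, the third step is to impose finiteness by matching it against Heckenberger's tables. The arithmetic constraints on the entries $q_{ij}$ coming from the module structure are rigid enough that only finitely many of Heckenberger's diagrams can occur, and these turn out to be exactly Cartan type $A_1\times A_1$ and $A_2$ (rank two), Cartan type $A_2\times A_2$ (rank four, arising when the diagram splits into two disconnected $A_2$ components), Super type ${\bf A}_{2}(q;\I_2)$, and the Nichols algebra $\ufo(8)$. Collecting the cases and citing the corresponding Lemmas \ref{Nichols_B}, \ref{Nichols_C}, \ref{Nichols_D}, \ref{Nichols_E}, \ref{Nichols_P}, \ref{Nichols_I}, \ref{Nichols_K}, \ref{Nichols_G} and \ref{Nichols_H} then yields the enumeration in items (1)--(6).

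The main obstacle is the middle step: explicitly diagonalizing the braiding on the two- and $2n$-dimensional simple modules, and verifying exhaustiveness. One must show both that each diagonal type that does occur matches exactly one entry of Heckenberger's list under the admissible values of the parameters, and that no further finite diagonal type is overlooked; the bookkeeping is heaviest for the $2n$-dimensional modules, where diagonal type appears only for small $n$ and forces the diagram to decompose. Because the per-case computations are carried out in the cited lemmas, the proof of the theorem itself reduces to assembling those results into the stated list.
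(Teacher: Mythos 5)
Your proposal is correct and follows essentially the same route as the paper: the theorem is there proved precisely by computing the braiding of each simple module from the explicit structures in Theorem \ref{MainYDM}, matching the resulting generalized Dynkin diagrams against Heckenberger's classification in the individual Lemmas \ref{Nichols_A}--\ref{Nichols_K}, and then assembling those lemmas into the stated list. Your observations about the one-dimensional case giving type $A_1$ automatically and about diagonal type for the $2n$-dimensional modules occurring only for small $n$ with decomposing diagrams agree with the paper's treatment.
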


The Nichols algebra $\mathfrak{B}\left(\mathscr{I}_{pjk}^{s}\right)$ is of rack type.
If $n>2$, then it is of type $D$, see the Lemma \ref{Nichols_I_rack}.
If  $\mathfrak{B}\left(\mathscr{I}_{pjk}^{s}\right)$ is finite dimensional and $n=2$, then 
it is of dihedral rack type $\Bbb{D}_4$ or  Cartan type $A_2\times A_2$, see 
the Lemma \ref{Nichols_I}.

The Nichols algebras $\mathfrak{B}\left(\mathscr{G}_{jk,p}^{st}\right)$ and $\mathfrak{B}\left(\mathscr{H}_{jk,p}^{s}\right)$ are  of type $V_{abe}$, see the section \ref{typeVabe}. 
If $ae=b^2$, then $\mathfrak{B}(V_{abe})$ is of diagonal type. If $ae\neq b^2$, according to 
 \cite[section 3.7]{Andruskiewitsch2018} and the Corollary \ref{Vabe}, we have
\begin{align}\label{fomulaeVabe}
\dim\mathfrak{B}(V_{abe})
=\left\{\begin{array}{ll}
4m, &b=-1, ae\in\Bbb{G}_m,\\
m^2, &ae=1, b\in\Bbb{G}_m\,\,\text{for}\,\,m\geq 2,\\
\infty, & b^2=(ae)^{-1},  b\in\Bbb{G}_{m}\,\, \text{for}\,\, m\geq 5,\\
\infty,   & b\notin \Bbb{G}_m \,\,\text{for}\,\,m\geq 2,\\
\text{unknown}, & otherwise, 
\end{array}\right.
\end{align}
where $\Bbb{G}_m$ denotes the set of $m$-th primitive roots of unity. 

If  $\mathfrak{B}\left(\mathscr{K}_{jk,p}^{s}\right)$ is finite dimensional and $n=1$, 
then it is of Cartan type $A_1\times A_1$ or $A_2$. 
If  $\mathfrak{B}\left(\mathscr{K}_{jk,p}^{s}\right)$ is finite dimensional, $\lambda=1$,  and $n=2$, 
then it is of Cartan type $A_2\times A_2$. 
If $n>2$,  then the Nichols algebra $\mathfrak{B}\left(\mathscr{K}_{jk,p}^{s}\right)$
is complicated, see section \ref{NicholsKCasen3} for the case $n=3$.

The two unsolved cases in the paper  are difficult in general.  
\begin{problem}
Determine the dimensions of the following Nichols algebras. 
\begin{enumerate}
\item the unknown case in the formula \eqref{fomulaeVabe};
\item the unknown case for $\mathfrak{B}\left(\mathscr{K}_{jk,p}^{s}\right)$,  see the section \ref{section_Nichos_K}.
\end{enumerate}
\end{problem}

The paper is organized as follows. In the section 1, we introduce the motivation and background of the paper and summarize our main results. In the section 2, we make an introduction for the Suzuki algebra and
 construct all simple representations of $A_{N\,2n}^{\mu\lambda}$.
 In the section 3, we construct all simple Yetter-Drinfeld modules over $A_{N\,2n}^{\mu\lambda}$
 by using Radford's method and  put the construction of those Yetter-Drinfeld modules in the appendix.  In the section 4, we calculate Nichols algebras over simple Yetter-Drinfeld modules of $A_{N\,2n}^{\mu\lambda}$ in cases: diagonal type, type $V_{abe}$, the Nichols algebras 
$\mathfrak{B}\left(\mathscr{I}_{pjk}^{s}\right)$ and  $\mathfrak{B}\left(\mathscr{K}_{jk,p}^{s}\right)$. 
Finite dimensional Nichols algebras of diagonal type or non-diagonal type are obtained, and
there are two cases unsolved. 

\section{The Hopf algebra $A_{Nn}^{\mu\lambda}$
and the representations of $A_{N\,2n}^{\mu\lambda}$}
Suzuki introduced a family of cosemisimple Hopf algebras $A_{Nn}^{\mu\lambda}$ 
which is parametrized by integers $N\geq 1$, $n\geq 2$ and $\mu$, $\lambda=\pm 1$, 
and investigated various properties and structures of them \cite{Suzuki1998}. 
As explained in \cite[Example 13.3]{Takeuchi2002}, the suzuki algebras  give the braided vector space
$(V_{abe}, c)$ a realization in Yetter-Drinfeld category. 
Wakui studied the Suzuki algebra $A_{Nn}^{\mu\lambda}$ in perspectives of 
polynomial invariant \cite{Wakui2010a}, braided Morita invariant \cite{Wakui2019} and coribbon structures \cite{Wakui2003}. 
The Hopf algebra  $A_{Nn}^{\mu\lambda}$ is generated by $x_{11}$, $x_{12}$, $x_{21}$, 
$x_{22}$ subject to the relations:
\begin{align*}
&x_{11}^2=x_{22}^2,\quad x_{12}^2=x_{21}^2,\quad \chi _{21}^n=\lambda\chi _{12}^n,
\quad \chi _{11}^n=\chi _{22}^n,\\
&x_{11}^{2N}+\mu x_{12}^{2N}=1,\quad
x_{ij}x_{kl}=0\,\, \text{whenever $i+j+k+l$ is odd}, 
\end{align*}
where we use the following notation for $m\geq 1$, 
$$\chi _{11}^m:=\overbrace{x_{11}x_{22}x_{11}\ldots\ldots }^{\textrm{$m$ }},\quad \chi _{22}^m:=\overbrace{x_{22}x_{11}x_{22}\ldots\ldots }^{\textrm{$m$ }},$$
$$\chi _{12}^m:=\overbrace{x_{12}x_{21}x_{12}\ldots\ldots }^{\textrm{$m$ }},\quad \chi _{21}^m:=\overbrace{x_{21}x_{12}x_{21}\ldots\ldots }^{\textrm{$m$ }}.$$

The Hopf algebra structure of $A_{Nn}^{\mu\lambda}$ is given by
\begin{equation}\label{eq5.3}
\Delta (\chi_{ij}^k)=\chi_{i1}^k\otimes \chi_{1j}^k+\chi_{i2}^k\otimes \chi_{2j}^k,\quad 
\varepsilon(x_{ij})=\delta_{ij}, \quad S(x_{ij})=x_{ji}^{4N-1}, 
\end{equation}
for $k\geq 1$, $i,j=1,2$. 

Let $\overline{i,i+j}=\{i,i+1,i+2,\cdots,i+j\}$ be  an index set. 
Then the  basis of $A_{Nn}^{\mu\lambda}$ can be represented by 
\begin{equation}\label{eq5.2}
\left\{x_{11}^s\chi _{22}^t,\ x_{12}^s\chi _{21}^t \mid 
s\in\overline{1,2N}, t\in\overline{0,n-1} 
\right\}.
\end{equation}

Thus for $s,t\geq 0$ with $s+t\geq 1$, 
\begin{align*}
\Delta (x_{11}^s\chi _{22}^t)
&=x_{11}^s\chi _{22}^t\otimes x_{11}^s\chi _{22}^t
+x_{12}^s\chi _{21}^t\otimes x_{21}^s\chi _{12}^t,\\ 
\Delta (x_{12}^s\chi _{21}^t)
&=x_{11}^s\chi _{22}^t\otimes x_{12}^s\chi _{21}^t
+x_{12}^s\chi _{21}^t\otimes x_{22}^s\chi _{11}^t. 
\end{align*}

The cosemisimple Hopf algebra $A_{Nn}^{\mu\lambda}$ is decomposed to the direct sum of simple subcoalgebras such as $A_{Nn}^{\mu\lambda}
=\bigoplus_{g\in G}\k g\oplus\bigoplus_{\substack{0\leq s\leq N-1\\ 1\leq t\leq n-1}}C_{st}$
\cite[Theorem 3.1]{Suzuki1998}\cite[lemma 5.5]{Wakui2010a}, 
where 
\begin{align*}
G&=\left\{x_{11}^{2s}\pm x_{12}^{2s}, x_{11}^{2s+1}\chi_{22}^{n-1}\pm 
        \sqrt{\lambda}x_{12}^{2s+1}\chi_{21}^{n-1}\mid s\in\overline{1,N}\right\},\\
C_{st}&=\k x_{11}^{2s}\chi_{11}^t+\k x_{12}^{2s}\chi_{12}^t+
              \k x_{11}^{2s}\chi_{22}^t+\k x_{12}^{2s}\chi_{21}^t,\quad
              s\in\overline{1,N}, t\in\overline{1,n-1}. 
\end{align*}
The set $\left\{\k g\mid g\in G\right\}\cup \left\{\k x_{11}^{2s}\chi_{11}^t
+\k x_{12}^{2s}\chi_{21}^t \mid s\in\overline{1,N}, 
t\in\overline{1,n-1}
\right\}$ 
is a full set of non-isomorphic simple left $A_{Nn}^{\mu\lambda}$-comodules, where 
the coactions of the comodules listed above are given by the coproduct $\Delta$. Denote 
the comodule $\k x_{11}^{2s}\chi_{11}^t
+\k x_{12}^{2s}\chi_{21}^t $ by $\Lambda_{st}$. That is to say the comodule 
$\Lambda_{st}=\k w_1+\k w_2$
is defined as 
\begin{align*}
\rho\left(w_1\right)
=  x_{11}^{2s}\chi_{11}^t\otimes w_1
     +x_{12}^{2s}\chi_{12}^t\otimes w_2,\quad 
\rho\left(w_2\right)
=  x_{11}^{2s}\chi_{22}^t\otimes w_2
     +x_{12}^{2s}\chi_{21}^t\otimes w_1 .
\end{align*}

\begin{proposition}
Let  $\omega$ be a primitive $8nN$-th root of unity. 
Set 
\[
\tilde{\mu}=\left\{\begin{array}{rl}
                    1,&\mu=1\\
                    \omega^{2n}, &\mu=-1,
                    \end{array}\right.
\qquad
\bar{\mu}=\left\{\begin{array}{rl}
                   1, & \mu=1,\vspace{1mm}\\
                   \omega^{4n}, & \mu=-1.
                   \end{array}\right.
\]
Then a full set of non-isomorphic simple left $A_{N\,2n}^{\mu\lambda}$-modules is given by 
\begin{enumerate}
\item $V_{ijk}=\k v$, $i,j\in\Bbb{Z}_2$, $k\in\overline{0,N-1}$. The action of $A_{N\,2n}^{\mu\lambda}$ on $V_{ijk}$ is given by 
\[
x_{12}\mapsto 0,\quad x_{21}\mapsto 0,\quad 
 x_{11}\mapsto (-1)^i\omega^{4nk},\quad  x_{22}\mapsto (-1)^j\omega^{4nk};
 \]
\item $V_{ijk}^\prime=\k v$, $\lambda=1$, $i,j\in\Bbb{Z}_2$, $k\in\overline{0,N-1}$.  The action of $A_{N\,2n}^{\mu+}$ on $V_{ijk}^\prime$ is given by 
\[
x_{11}\mapsto 0,\quad x_{22}\mapsto 0,\quad 
 x_{12}\mapsto (-1)^i\omega^{4nk}\tilde{\mu},\quad  x_{21}\mapsto (-1)^j\omega^{4nk}\tilde{\mu};
 \]

\item  $V_{jk}=\k v_{1}\oplus \k v_{2}$, $k\in\overline{0,N-1}$, $\frac j2\in\overline{0,n-1}$. The action of $A_{N\,2n}^{\mu\lambda}$ on the row vector $(v_{1}, v_{2})$ 
  is given by 
 \[
 x_{11}\mapsto \begin{pmatrix}0&\omega^{2(4kn-jN)}\\
 \omega^{2jN}&0
 \end{pmatrix},\quad x_{12},x_{21}\mapsto 0,\quad 
  x_{22}\mapsto \begin{pmatrix}0&\omega^{8kn}\\
 1&0
 \end{pmatrix};
 \]
 
\item $V_{jk}^\prime=\k v_{1}^\prime\oplus \k v_{2}^\prime$, $k\in\overline{0,N-1}$, 
$\left\{\begin{array}{ll}\frac j2\in\overline{1,n-1}, &\lambda=1,\vspace{1mm}\\
\frac {j+1}2\in\overline{1,n}, &\lambda=-1.
\end{array}\right.$  The action of $A_{N\,2n}^{\mu\lambda}$ on the row vector $(v_{1}^\prime, v_{2}^\prime)$ 
  is given by 
 \[
 x_{21}\mapsto \begin{pmatrix}0& \bar{\mu}\omega^{2(4kn-jN)}\\
 \omega^{2jN}&0
 \end{pmatrix},\quad x_{11},x_{22}\mapsto 0,\quad 
  x_{12}\mapsto \begin{pmatrix}0&\bar{\mu}\omega^{8kn}\\
 1&0
 \end{pmatrix}.
 \]
\end{enumerate}
\end{proposition}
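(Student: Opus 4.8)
The plan is to exploit the orthogonality relations $x_{ij}x_{kl}=0$ for $i+j+k+l$ odd, which force every product mixing an \emph{even} generator ($x_{11}$ or $x_{22}$) with an \emph{odd} generator ($x_{12}$ or $x_{21}$) to vanish. First I would set $e=x_{11}^{2N}$ and $f=\mu x_{12}^{2N}$ and check that these are central orthogonal idempotents with $e+f=1$: idempotency and $e+f=1$ follow from $x_{11}^{2N}+\mu x_{12}^{2N}=1$ together with $ef=0$, while centrality uses $x_{11}^{2}=x_{22}^{2}$ (so that $e=x_{11}^{2N}=x_{22}^{2N}$ commutes with both $x_{11}$ and $x_{22}$) and $ex_{12}=x_{12}e=0$ coming from the vanishing relations. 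This yields an algebra decomposition $A_{N\,2n}^{\mu\lambda}\cong A_{e}\times A_{f}$, where on $A_{e}=eA$ the odd generators act as $0$ and $x_{11}^{2N}=1$, while on $A_{f}=fA$ the even generators act as $0$ and $x_{12}^{2N}=\mu$. Consequently every simple module is a simple module over exactly one of the two factors, and it suffices to classify the simples of $A_e$ and of $A_f$ separately.

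Next I would analyse $A_{e}$, generated by $x_{11},x_{22}$ subject to $x_{11}^2=x_{22}^2$, $\chi_{11}^{2n}=\chi_{22}^{2n}$ and $x_{11}^{2N}=1$. The element $z=x_{11}^2=x_{22}^2$ is central with $z^N=1$, so on a simple module it is a scalar, necessarily of the form $\omega^{8kn}$ with $k\in\overline{0,N-1}$. Fixing $z$ and putting $g=x_{11}x_{22}$, one computes $x_{22}x_{11}=z^2 g^{-1}$, so that $\chi_{11}^{2n}=\chi_{22}^{2n}$ becomes $g^{2n}=z^{2n}$ while $x_{11}g x_{11}^{-1}=z^2 g^{-1}$. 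This is a metacyclic relation: the one-dimensional modules are those on which $g$ acts by a scalar $\gamma$ with $\gamma^2=z^2$ (namely $\gamma=(-1)^{i+j}z$), giving the family $V_{ijk}$; the remaining simples are two-dimensional, induced from a $g$-eigenvalue $\gamma$ with $\gamma^{2n}=z^{2n}$ and $\gamma^2\neq z^2$, giving $V_{jk}$. I would index these by $j$ and read off the explicit matrices of $x_{11},x_{22}$.

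I would treat $A_{f}$ in the same way, with $x_{12},x_{21}$, the central element $z'=x_{12}^2=x_{21}^2$ satisfying $z'^N=\mu$, and the relation $\chi_{21}^{2n}=\lambda\chi_{12}^{2n}$, which turns into $g'^{2n}=\lambda z'^{2n}$ for $g'=x_{12}x_{21}$. Two features distinguish this factor. First, $z'^N=\mu$ forces the central scalar to be an $N$-th root of $\mu$, which is exactly what the correction constants $\bar\mu$ (an $N$-th root of $\mu$) and $\tilde\mu$ (a $2N$-th root of $\mu$) encode, producing $V_{ijk}^\prime$ and $V_{jk}^\prime$. Second, on a one-dimensional module $x_{12},x_{21}$ are nonzero scalars (since $x_{12}^{2N}=\mu\neq 0$), hence $\chi_{21}^{2n}=\chi_{12}^{2n}$ and the relation collapses to $\lambda=1$; this is precisely why the one-dimensional family $V_{ijk}^\prime$ exists only for $\lambda=1$. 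The sign $\lambda$ also shifts the admissible spectrum of $g'$ by a half-step, which is the source of the two different index ranges $\tfrac j2\in\overline{1,n-1}$ when $\lambda=1$ versus $\tfrac{j+1}2\in\overline{1,n}$ when $\lambda=-1$.

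Finally I would verify the three standard points: each listed assignment respects all defining relations (a direct matrix computation), the modules are pairwise non-isomorphic (they are separated by the central scalars $z,z'$ and by the $g$- or $g'$-eigenvalues), and the list is complete. Completeness follows from the structure theory above, since every simple module carries a central character splitting it into the $A_e$ or $A_f$ factor, and the metacyclic analysis exhausts the simples of each factor; in the semisimple cases one can cross-check via $\sum(\dim)^2=\dim A_{N\,2n}^{\mu\lambda}=8nN$. The step I expect to be most delicate is pinning down the exact index ranges so that each simple appears once and no reducible module is included: the two-dimensional induced module degenerates precisely when its eigenvalue satisfies $\gamma^2=z^2$, equivalently when $x_{11}$ and $x_{22}$ act by the \emph{same} matrix, in which case it splits as a direct sum of two of the one-dimensional modules. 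This boundary value, together with the $\lambda,\mu$-dependent shifts in the odd factor, is what must be tracked carefully to recover the stated parametrization.
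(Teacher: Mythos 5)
Your proposal is correct, and there is nothing in the paper to compare it with: the paper explicitly omits the argument (``We leave the proof to the reader since it's easy and tedious''), so your route stands on its own. The key steps all check out. The elements $e=x_{11}^{2N}=z^{N}$ and $f=\mu x_{12}^{2N}=\mu z'^{N}$, with $z=x_{11}^{2}=x_{22}^{2}$ and $z'=x_{12}^{2}=x_{21}^{2}$ central, are indeed orthogonal central idempotents summing to $1$, by the vanishing relations $x_{ij}x_{kl}=0$ for $i+j+k+l$ odd; each factor is the group algebra of a group of order $4nN$, and your relations $x_{11}gx_{11}^{-1}=z^{2}g^{-1}$, $g^{2n}=z^{2n}$ in $A_{e}$ are correct (from $g\,(x_{22}x_{11})=z^{2}$), as is the twisted version $g'^{\,2n}=\lambda z'^{\,2n}$ in $A_{f}$ (from $x_{21}x_{12}=z'^{2}g'^{-1}$ and $\chi_{21}^{2n}=\lambda\chi_{12}^{2n}$). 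Since $\omega^{8n}$ is a primitive $N$-th root of unity, the central scalars are exactly $\omega^{8kn}$ resp.\ $\bar\mu\,\omega^{8kn}$, which is where $\bar\mu$ and $\tilde\mu$ enter, and your two structural observations about $A_{f}$ are both right: one-dimensional modules force $\lambda=1$ (the generators commute as scalars), and on the two-dimensional modules the relation evaluates to $\gamma'^{\,2n}=\lambda\,\zeta'^{\,2n}$, i.e.\ $(-1)^{j}=\lambda$, which is precisely the source of the two index ranges in item (4). The Clifford-theoretic induction from the index-two abelian subalgebra exhausts the simples, and the cross-check $\sum(\dim)^{2}=\dim A_{N\,2n}^{\mu\lambda}=8nN$ is available since the algebra is semisimple.

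Your closing caveat about the degeneration boundary is not only well-placed, it actually detects a misprint in the statement you were asked to prove. For family (3) the degeneracy condition $\gamma^{2}=z^{2}$ reads $\omega^{4jN}=1$, i.e.\ $2n\mid j$, which within the stated range occurs exactly at $j=0$: there $x_{11}$ and $x_{22}$ act by the same matrix and $V_{0k}\cong V_{00k}\oplus V_{11k}$ is reducible. So the correct range in item (3) is $\frac j2\in\overline{1,n-1}$, not $\overline{0,n-1}$. This is confirmed both by the count (with $\frac j2\in\overline{1,n-1}$ one gets $4N+4N+4(n-1)N+4(n-1)N=8nN$ for $\lambda=1$ and $4N+4(n-1)N+4nN=8nN$ for $\lambda=-1$, whereas the stated range overshoots by $4N$) and by the paper's own later usage, where the modules $\mathscr{D}_{jk,p}^{st}$ and $\mathscr{E}_{jk,p}^{st}$ built on $V_{jk}$ carry the constraint $\frac j2\in\overline{1,n-1}$ in Table \ref{YDMod1} and Theorem \ref{MainYDM}, consistent with the total $2N^{2}(4n^{2}-1)$. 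In short: your proof is sound, and where it diverges from the literal statement it is the statement that needs correcting.
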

We leave the proof to the reader since it's easy and tedious.

\section{Yetter-Drinfeld modules over $A_{N\,2n}^{\mu\lambda}$}
Similarly according to Radford's method \cite[Proposition 2]{radford2003oriented}, any simple left Yetter-Drinfeld module over  a Hopf algebra $H$ could be constructed by the submodule of  tensor product of a left  module $V$ of $H$ and
$H$ itself, where the  module and comodule structures are given by :
\begin{align}
h\cdot (\ell\boxtimes g)
&=(h_{(2)}\cdot \ell)\boxtimes h_{(1)}gS(h_{(3)}),\label{eq:action}\\
\rho(\ell\boxtimes h)
&=h_{(1)}\otimes (\ell\boxtimes h_{(2)}) , \forall h, g\in H, \ell\in V.
\label{eq:coaction}
\end{align}
Here we use $\boxtimes$ instead of $\otimes$ to avoid confusion by using too many symbols of the tensor product.  we construct all simple left Yetter-Drinfeld modules over $A_{N\,2n}^{\mu\lambda}$ in this way and put them in the appendix. 

Let $V$ be a simple left $A_{N\,2n}^{\mu\lambda}$ module, we decompose 
$V\boxtimes A_{N\,2n}^{\mu\lambda}$ into small Yetter-Drinfeld modules.
The left $A_{N\,2n}^{\mu\lambda}$-module structure of 
$V_{ijk}\boxtimes A_{N\,2n}^{\mu\lambda}$ is decided  by formulas in 
the Figure \ref{figureFormulae}.

\begin{figure}
\begin{align*}
x_{pq}\cdot \left(v\boxtimes x_{11}^s\chi_{22}^t\right)
&=\left\{\begin{array}{ll}
(-1)^i\omega^{4nk}v\boxtimes x_{11}^s, &pq=11, t=0, \vspace{1mm}\\
(-1)^i\omega^{4nk}v\boxtimes x_{11}^{s+1}\chi_{22}^{t-1}, 
        &pq=11, t\,\,\text{even}, t>0,\vspace{1mm}\\
(-1)^i\omega^{4nk}v\boxtimes x_{11}^{s-1}\chi_{22}^{t+1}, &pq=11, t\,\,\text{odd}, \vspace{1mm}\\
(-1)^j\omega^{4nk}v\boxtimes x_{11}^{s-1}\chi_{22}^{t+1}, 
        &pq=22, s\,\,\text{odd}, t\,\,\text{odd}, \vspace{1mm}\\
(-1)^j\omega^{4nk}v\boxtimes x_{11}^{s-3}\chi_{22}^{t+3}, 
        &pq=22, s\,\,\text{odd}, t\,\,\text{even}, \vspace{1mm}\\
(-1)^j\omega^{4nk}v\boxtimes x_{11}^{s}, 
        &pq=22, s\,\,\text{even}, t=0, \vspace{1mm}\\
(-1)^j\omega^{4nk}v\boxtimes x_{11}^{s}\chi_{22},  
        &pq=22, s\,\,\text{even}, t=1, \vspace{1mm}\\
(-1)^j\omega^{4nk}v\boxtimes x_{11}^{s+1}\chi_{22}^{t-1}, 
        &pq=22, s\,\,\text{even}, 0<t\,\,\text{even}, \vspace{1mm}\\
(-1)^j\omega^{4nk}v\boxtimes x_{11}^{s+3}\chi_{22}^{t-3}, 
        &pq=22, s\,\,\text{even},  1<t\,\,\text{odd}, \vspace{1mm}\\
0, &otherwise,
\end{array}\right.\\
x_{pq}\cdot \left(v\boxtimes x_{12}^s\chi_{21}^t\right)
&=\left\{\begin{array}{ll}
(-1)^j\omega^{4nk}v\boxtimes x_{12}^s, &pq=11, t=0, \vspace{1mm}\\
(-1)^j\omega^{4nk}v\boxtimes x_{12}^{s+1}\chi_{21}^{t-1}, 
        &pq=11, t\,\,\text{even}, t>0,\vspace{1mm}\\
(-1)^j\omega^{4nk}v\boxtimes x_{12}^{s-1}\chi_{21}^{t+1}, &pq=11, t\,\,\text{odd}, \vspace{1mm}\\
(-1)^i\omega^{4nk}v\boxtimes x_{12}^{s-1}\chi_{21}^{t+1}, 
        &pq=22, s\,\,\text{odd}, t\,\,\text{odd}, \vspace{1mm}\\
(-1)^i\omega^{4nk}v\boxtimes x_{12}^{s-3}\chi_{21}^{t+3}, 
        &pq=22, s\,\,\text{odd}, t\,\,\text{even}, \vspace{1mm}\\
(-1)^i\omega^{4nk}v\boxtimes x_{12}^{s}, 
        &pq=22, s\,\,\text{even}, t=0, \vspace{1mm}\\
(-1)^i\omega^{4nk}v\boxtimes x_{12}^{s}\chi_{21},  
        &pq=22, s\,\,\text{even}, t=1, \vspace{1mm}\\
(-1)^i\omega^{4nk}v\boxtimes x_{12}^{s+1}\chi_{21}^{t-1}, 
        &pq=22, s\,\,\text{even}, 0<t\,\,\text{even},  \vspace{1mm}\\
(-1)^i\omega^{4nk}v\boxtimes x_{12}^{s+3}\chi_{21}^{t-3}, 
        &pq=22, s\,\,\text{even}, 1<t\,\,\text{odd}, \vspace{1mm}\\
0, &otherwise.
\end{array}\right.
\end{align*}
\caption{The action of generators on $V_{ijk}\boxtimes A_{N\,2n}^{\mu\lambda}$}
\label{figureFormulae}
\end{figure}

We can decompose $V_{ijk}\boxtimes A_{N\,2n}^{\mu\lambda}$ into small Yetter-Drinfeld modules as 
\begin{align*}
V_{ijk}\boxtimes A_{N\,2n}^{\mu\lambda}
\simeq \bigoplus_{s=1}^N
\left[M_{ijk}^s\oplus N_{ijk}^s\oplus \bigoplus_{t=-1}^{n-1}(V_{ijk}\boxtimes C_{s\, 2t+2})\right],
\end{align*}
where $C_{s\, 0}:=\k x_{11}^{2s}+\k x_{12}^{2s}$, 
$C_{s\, 2n}:=\k x_{11}^{2s}\chi_{11}^{2n}+\k x_{12}^{2s}\chi_{12}^{2n}$, and 
\[
V_{ijk}\boxtimes C_{s\, 2t+2}\simeq \left\{\begin{array}{ll}
\mathscr{C}_{ijk,0}^{st}\oplus \mathscr{C}_{ijk,1}^{st},& t\in\overline{0,n-2},\\
\mathscr{B}_{ijk}^{s},& t=-1, j=i+1,\\
\mathscr{A}_{iik,0}^{s}\oplus \mathscr{A}_{iik,1}^{s},& t=-1, j=i,\\
\mathscr{C}_{ijk,0}^{st}, & t=n-1, \text{see Table}\,\,\ref{YDMod1}, \vspace{1mm}\\
\bar{\mathscr{A}}_{ijk,0}^{s}\oplus \bar{\mathscr{A}}_{ijk,1}^{s},& t=n-1, 
i=j(\text{or}\,j+1), \lambda=1(\text{or}\,-1).
\end{array}\right.
\]

Similarly, we have 
\[
V_{jk}\boxtimes A_{N\,2n}^{\mu\lambda}\simeq 
\bigoplus_{s=1}^N\left[
\bigoplus_{p=0}^1\left(\mathscr{I}_{pjk}^s\oplus \mathscr{J}_{pjk}^s\right)\oplus
\bigoplus_{t=-1}^{n-1}(V_{jk}\boxtimes C_{s\, 2t+2})
\right], 
\]
where $V_{jk}\boxtimes C_{s\, 2t+2}
\simeq \left\{\begin{array}{ll}
\bigoplus_{p=0}^1\left(\mathscr{D}_{jk,p}^{st}\oplus\mathscr{E}_{jk,p}^{s\,t+1}\right), 
& 0\leq t\leq n-2,\vspace{1mm}\\
\mathscr{D}_{jk,0}^{st}\oplus \mathscr{D}_{jk,1}^{st},&t=n-1,\vspace{1mm}\\
\mathscr{E}_{jk,0}^{s0}\oplus \mathscr{E}_{jk,1}^{s0},&t=-1;
\end{array}\right.$
\[
V_{jk}^\prime\boxtimes A_{N\,2n}^{\mu\lambda}\simeq 
\bigoplus_{s=1}^N\left[
\bigoplus_{p=0}^1\left(\mathscr{K}_{jk,p}^s\oplus \mathscr{L}_{jk,p}^s\right)
\oplus\bigoplus_{t=0}^{n-1}(V_{jk}^\prime\boxtimes C_{s\, 2t+1})
\right], 
\]
where $V_{jk}^\prime\boxtimes C_{s\, 2t+1}\simeq \bigoplus_{p=0}^1
\left(\mathscr{G}_{jk,p}^{st}\oplus \mathscr{H}_{jk,p}^{st}\right)$;
\[
V_{ijk}^\prime\boxtimes A_{N\,2n}^{\mu\lambda}
\simeq \bigoplus_{s=1}^N\left[\mathscr{L}_{ijk,0}^s\oplus \mathscr{L}_{ijk,1}^s
\oplus\bigoplus_{t=0}^{n-1}(V_{ijk}^\prime\boxtimes C_{s\,2t+1})
\right],
\]
where $V_{ijk}^\prime\boxtimes C_{s\,2t+1}\simeq \bigoplus_{p=0}^1\mathscr{P}_{ijk,p}^{st}$.

Our strategy is to break $V\boxtimes A_{N\,2n}^{\mu\lambda}$ into 
small sub-Yetter-Drinfeld modules which can't  break any more, and single out a complete set 
of simple 
Yetter-Drinfeld modules over $A_{N\,2n}^{\mu\lambda}$ from those submodules.
According to the above decompositions, we see the dimension 
distribution of those small Yetter-Drinfeld 
modules is $1$, $2$ and $2n$. From the appendix and table \ref{YDMod1}, 
it is not difficult to see that there are $8N^2$ pairwise non-isomorphic Yetter-Drinfeld modules of  dimension one and  $2N^2(4n^2-1)$ pairwise non-isomorphic Yetter-Drinfeld modules of  
dimension two, see the Theorem \ref{MainYDM}.  There are  seven classes of Yetter-Drinfeld modules of  dimension $2n$
 in total and they have the following relations. 
\begin{enumerate}
\item $\mathscr{I}_{pj k}^s\simeq \mathscr{I}_{pj^\prime k}^s$
          in case of $j\equiv j^\prime \mod 4$.
\item If $n$ is even, then $\mathscr{M}_{ijk}^s\simeq \mathscr{I}_{pj^\prime k}^s$
          in case of $i=p$, $j^\prime \mid 4$.
\item  If $n$ is odd, then $\mathscr{M}_{ijk}^s\simeq \mathscr{I}_{pj^\prime k}^s$
          in case of $i=p$ and \\
         $
        j^\prime \equiv
        \left\{\begin{array}{ll}
        0 \mod 4, &\text{if}\, i+j\,\,\,\text{is even},\\
        2 \mod 4, &\text{if}\, i+j\,\,\,\text{is odd}.
        \end{array}\right.$
\item If $n$ is even, then $\mathscr{N}_{ijk}^s\simeq \mathscr{I}_{pj^\prime k}^s$
         in case of $j=p$ and \\
         $
        j^\prime \equiv
        \left\{\begin{array}{ll}
        0 \mod 4, &\text{if}\,\lambda=1,\\
        2 \mod 4, &\text{if}\,\lambda=-1.
        \end{array}\right.$
\item If $n$ is odd, then $\mathscr{N}_{ijk}^s\simeq \mathscr{I}_{pj^\prime k}^s$
         in case of $j=p$ and \\
         $
        j^\prime \equiv
        \left\{\begin{array}{ll}
        0 \mod 4, &\text{if}\,\lambda=1, i+j\,\,\,\text{is even},\\
        2 \mod 4, &\text{if}\,\lambda=1, i+j\,\,\,\text{is odd},\\
        2 \mod 4, &\text{if}\,\lambda=-1, i+j\,\,\,\text{is even},\\
        0 \mod 4, &\text{if}\,\lambda=-1, i+j\,\,\,\text{is odd}.
        \end{array}\right.$
\item $\mathscr{I}_{pjk}^s\simeq \mathscr{J}_{pj^\prime k}^s$ in case of 
         $j\equiv 
         \left\{\begin{array}{ll}
         j^\prime \mod 4, &\text{if}\, \lambda=1,\\
         j^\prime +2\mod 4, &\text{if}\, \lambda=-1.
         \end{array}\right.$
\item $\mathscr{K}_{jk,p}^s\simeq \mathscr{K}_{j^\prime k,p}^s$ in case of $j
          \equiv j^\prime\,\,\mathrm{mod}\,\, 4$.
\item $\mathscr{K}_{jk,p}^s\simeq \mathscr{L}_{j^\prime k,p}^s$ in case of $j+j^\prime
          \equiv 0\,\,\mathrm{mod}\,\, 4$.
\item $\mathscr{Q}_{ijk,p}^s\simeq \mathscr{L}_{j^\prime k,p}^s$ in case of
$$
j^\prime\equiv \left\{\begin{array}{ll}
0 \mod 4,&\text{if}\,i+j\,\,\text{is even and}\,\, n\,\text{is odd},\\
2 \mod 4,&\text{if}\,i+j\,\,\text{is odd and}\,\, n\,\text{is odd},\\
0\mod 4, &\text{if}\,n\,\text{is even}.
\end{array}\right.
$$
\end{enumerate}
Since $8N^2\cdot 1^2+2N^2(4n^2-1)\cdot 2^2+8N^2\cdot (2n)^2=(8Nn)^2$, all the simple Yetter-Drinfeld modules are given by  the following theorem. 

\begin{theorem}\label{MainYDM}
A full set of  non-isomorphic simple Yetter-Drinfeld modules over $A_{N\,2n}^{\mu\lambda}$ is given by the following list.
\begin{enumerate}
\item There are $8N^2$ non-isomorphic Yetter-Drinfeld modules of  dimention one:
\begin{enumerate}
\item $\mathscr{A}_{iik,p}^s$, $i, p\in \Bbb{Z}_2$,  $s\in\overline{1,N}$, $k\in\overline{0,N-1}$;
\item $\bar{\mathscr{A}}_{i\,i+1\,k,p}^s$, $i, p\in \Bbb{Z}_2$,  $s\in\overline{1,N}$, $k\in\overline{0,N-1}$.
\end{enumerate}
\item There are $2N^2(4n^2-1)$  non-isomorphic Yetter-Drinfeld modules of 
dimension two:
\begin{enumerate}
\item $\mathscr{B}_{01k}^s$, $s\in\overline{1,N}$, $k\in\overline{0,N-1}$;
\item $\mathscr{C}_{ijk,p}^{st}$, $ij=00$ or $01$, $k\in\overline{0,N-1}$, $p\in \Bbb{Z}_2$,  
          $s\in\overline{1,N}$, $t\in\overline{0,n-2}$;
\item $\mathscr{C}_{ijk,p}^{st}$, $i=0$, 
          $j=\left\{\begin{array}{ll} i+1,&\text{if}\,\lambda=1,\\i,&\text{if}\,\lambda=-1,\end{array}\right.$
          $k\in\overline{0,N-1}$, $s\in\overline{1,N}$, $p=0$,
          $t=n-1$;
\item $\mathscr{D}_{jk,p}^{st}$, $\frac j2\in\overline{1,n-1}$, 
          $k\in\overline{0,N-1}$, $p\in \Bbb{Z}_2$, $s\in\overline{1, N}$, $t\in\overline{0,n-1}$; 
\item $\mathscr{E}_{jk,p}^{st}$, $\frac j2\in\overline{1,n-1}$, 
          $k\in\overline{0,N-1}$, $p\in \Bbb{Z}_2$, $s\in\overline{1,N}$, $t\in\overline{0,n-1}$; 
\item $\mathscr{G}_{jk,p}^{st}$, 
          $\left\{\begin{array}{ll}\frac j2\in\overline{1,n-1}, &\text{if}\,\lambda=1,\vspace{1mm}\\
             \frac {j+1}2\in\overline{1,n}, &\text{if}\,\lambda=-1,
          \end{array}\right.$
          $k\in\overline{0,N-1}$, $p\in \Bbb{Z}_2$, $s\in\overline{1,N}$, 
          $t\in\overline{0,n-1}$; 
\item $\mathscr{H}_{jk,p}^{st}$, 
          $\left\{\begin{array}{ll}\frac j2\in\overline{1,n-1}, &\text{if}\,\lambda=1,\vspace{1mm}\\
             \frac {j+1}2\in\overline{1,n}, &\text{if}\,\lambda=-1,
          \end{array}\right.$
          $k\in\overline{0,N-1}$, $p\in \Bbb{Z}_2$, $s\in\overline{1,N}$, 
          $t\in\overline{0,n-1}$;
\item $\mathscr{P}_{ijk,p}^{st}$, $ij=00$ or $01$, $k\in\overline{0,N-1}$, $p\in \Bbb{Z}_2$,    
          $s\in\overline{1,N}$, $t\in\overline{0,n-1}$.
\end{enumerate} 
\item There are $8N^2$  non-isomorphic Yetter-Drinfeld modules of 
dimension $2n$:
\begin{enumerate}
\item $\mathscr{I}_{pjk}^s$, $j=2$ or $4$, 
          $k\in\overline{0,N-1}$, $p\in \Bbb{Z}_2$,
          $s\in\overline{1,N}$;
\item $\mathscr{K}_{jk,p}^s$, 
          $j=\left\{\begin{array}{ll}
          1\,\text{or}\,3, &\text{if}\,\lambda=-1,\\
          2\,\text{or}\,4, &\text{if}\,\lambda=1,
          \end{array}\right.
          $
          $k\in\overline{0,N-1}$, $p\in \Bbb{Z}_2$,
          $s\in\overline{1,N}$.
\end{enumerate}
\end{enumerate}
\end{theorem}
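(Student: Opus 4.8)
The plan is to obtain completeness from Radford's construction together with a dimension count in the Drinfeld double. By Radford's method (\cite[Proposition 2]{radford2003oriented}), every simple object of $\ydh$, with $H=A_{N\,2n}^{\mu\lambda}$, is isomorphic to a sub-Yetter--Drinfeld module of $V\boxtimes H$ for some simple left $H$-module $V$, the structures being \eqref{eq:action} and \eqref{eq:coaction}. Since $H$ is cosemisimple and $\operatorname{char}\k=0$, the Larson--Radford theorem makes $H$ semisimple as well; consequently the category $\ydh\simeq{}_{D(H^{\mathrm{cop}})}\mathcal{M}$ is semisimple and $D(H^{\mathrm{cop}})$ is a semisimple algebra of dimension $(\dim H)^2=(8Nn)^2$. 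In this semisimple setting every sub-object is a direct summand, so it suffices to decompose each of the four families $V_{ijk}\boxtimes H$, $V_{ijk}^\prime\boxtimes H$, $V_{jk}\boxtimes H$ and $V_{jk}^\prime\boxtimes H$ into simple pieces and then remove the repetitions.

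First I would make the two structures on $V\boxtimes H$ fully explicit: the action is given by Figure \ref{figureFormulae} and its analogues, while the coaction is read off from \eqref{eq:coaction} as $\rho(\ell\boxtimes h)=h_{(1)}\otimes(\ell\boxtimes h_{(2)})$, so that it is controlled entirely by $\Delta$ on the $H$-slot. Tracking both structures produces the stated decompositions into the pieces $\mathscr{A},\bar{\mathscr{A}},\mathscr{B},\mathscr{C},\mathscr{D},\mathscr{E},\mathscr{G},\mathscr{H},\mathscr{I},\mathscr{J},\mathscr{K},\mathscr{L}$ (and their companions) of dimensions $1$, $2$ and $2n$. For each piece I would check simplicity: the $1$- and $2$-dimensional pieces are small enough that irreducibility is immediate from the explicit action, whereas for the $2n$-dimensional pieces $\mathscr{I}_{pjk}^s$ and $\mathscr{K}_{jk,p}^s$ I would pick a cyclic vector and verify that repeated application of the generators sweeps out a full basis, so that no proper nonzero sub-Yetter--Drinfeld module can exist.

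Next I would establish the isomorphism relations (1)--(10), which are what collapses the raw list of summands to the list in the statement. The decisive invariants are the $H$-comodule support, namely which simple subcoalgebra $\k g$ or $C_{st}$ in $H=\bigoplus_{g\in G}\k g\oplus\bigoplus_{s,t}C_{st}$ carries the coaction, together with the scalars of the $H$-action (the eigenvalues $(-1)^i\omega^{4nk}$ and their matrix analogues). Two pieces are isomorphic exactly when these data agree, and this is precisely the content of the congruences modulo $4$ recorded in (1)--(10); conversely, mismatched invariants force non-isomorphism. After collapsing along these relations one is left with the $8N^2$ one-dimensional modules $\mathscr{A}_{iik,p}^s$, $\bar{\mathscr{A}}_{i\,i+1\,k,p}^s$, the $2N^2(4n^2-1)$ two-dimensional modules, and the $8N^2$ modules $\mathscr{I}_{pjk}^s$ ($j\in\{2,4\}$), $\mathscr{K}_{jk,p}^s$ of dimension $2n$.

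Finally the argument closes through the identity
\begin{equation*}
8N^2\cdot 1^2+2N^2(4n^2-1)\cdot 2^2+8N^2\cdot(2n)^2=64N^2n^2=(8Nn)^2=\dim D(H^{\mathrm{cop}}).
\end{equation*}
Since $D(H^{\mathrm{cop}})$ is semisimple over the algebraically closed field $\k$, its full set of non-isomorphic simples satisfies $\sum_S(\dim S)^2=\dim D(H^{\mathrm{cop}})$. Because each listed module is simple and occurs as a summand of some $V\boxtimes H$, these modules already exhaust all isomorphism classes; the numerical match above then forces each class to be counted exactly once, proving simultaneously that the listed modules are pairwise non-isomorphic and that none is missing. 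The hard part is the middle step: verifying simplicity of the $2n$-dimensional pieces and pinning down the exact relations (1)--(10) demands a careful case analysis on the parities of $s$ and $t$ and on the value of $\lambda$, and it is there that the bookkeeping is heavy; once those invariants are in hand, completeness is a formal consequence of the dimension count.
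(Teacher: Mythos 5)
Your proposal is correct and follows essentially the same route as the paper: Radford's construction of simple Yetter--Drinfeld modules as summands of $V\boxtimes A_{N\,2n}^{\mu\lambda}$, explicit decomposition of the four families, the isomorphism relations (1)--(10) to remove repetitions, and the closing count $8N^2\cdot 1^2+2N^2(4n^2-1)\cdot 2^2+8N^2\cdot(2n)^2=(8Nn)^2=\dim D(H^{\mathrm{cop}})$ in the semisimple Drinfeld double. The only difference is cosmetic: you spell out via Larson--Radford the semisimplicity that the paper leaves implicit, which is a welcome clarification but not a different argument.
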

\begin{remark}
As for the description of those Yetter-Drinfeld modules, please see the Appendix. 
To simplify the notations, we alway allow $j$ takes values $2$ and $4$ for $\mathscr{I}_{pjk}^s$, 
and the similar settings for $\mathscr{K}_{jk,p}^s$. 
\end{remark}

\begin{table}
\newcommand{\tabincell}[2]{\begin{tabular}{@{}#1@{}}#2\end{tabular}}
\begin{tabular}{|c|c|c|c|c|}
\hline
YD-mod 
& dim &  parameters &  mod   & comod\\\hline
$\mathscr{B}_{ijk}^s$ 
& $2$
&\tabincell{c}{\vspace{1mm} $i,j\in\Bbb{Z}_2$, $j=i+1$,
\\$s\in\overline{1,N}$,  $k\in\overline{0,N-1}$\vspace{1mm}} 
&  $V_{ijk}\oplus V_{jik}$  
&\tabincell{c}{$\k g^+_s \oplus \k g^-_s $}
\\\hline
\tabincell{c}{$\mathscr{C}_{ijk,p}^{st}$}
& $2$
& \tabincell{c}{$i,j,p\in\Bbb{Z}_2$,\\$s\in\overline{1,N}$,  $t\in\overline{0,n-2}$,\\
    $k\in\overline{0,N-1}$}
& $V_{ijk}\oplus V_{i+1\,j+1\,k}$ 
& $\Lambda_{s\,2t+2}$
\\\hline
\tabincell{c}{$\mathscr{C}_{ijk,p}^{st}$}
& $2$
& \tabincell{c}{$i,j,p\in\Bbb{Z}_2$,\\
    $j=\left\{\begin{array}{ll} i+1,&\lambda=1,\\i,&\lambda=-1,\end{array}\right.$\\
    $s\in\overline{1,N}$,  $t=n-1$,\\
    $k\in\overline{0,N-1}$}
& $V_{ijk}\oplus V_{i+1\,j+1\,k}$ 
& $\k h^+_s \oplus \k h^-_s$
\\\hline
\multirow{2}{*}{$\mathscr{D}_{jk,p}^{st}$}
& \multirow{2}{*}{$2$}
&\multirow{2}{*}{ \tabincell{c}{$s\in\overline{1,N}$,  $t\in\overline{0,n-1}$,\\
    $\frac j2\in\overline{1,n-1}$, $p\in\Bbb{Z}_2$,\\$k\in\overline{0,N-1}$}}
& \multirow{2}{*}{$V_{jk}$} 
&  \tabincell{c}{ $t\neq n-1$, \\$\Lambda_{s\,2t+2}$}
\\\cline{5-5}
&&&& \tabincell{c}{ $t=n-1$, \\$\k h^+_s \oplus \k h^-_s $}
\\\hline
\multirow{2}{*}{$\mathscr{E}_{jk,p}^{st}$}
& \multirow{2}{*}{$2$}
& \multirow{2}{*}{
\tabincell{c}{$s\in\overline{1,N}$,  $t\in\overline{0,n-1}$,\\
    $\frac j2\in\overline{1,n-1}$, $p\in\Bbb{Z}_2$,\\$k\in\overline{0,N-1}$}
}
& \multirow{2}{*}{$V_{jk}$} 
& \tabincell{c}{ $t\neq 0$, \\$\Lambda_{s\, 2t}$}
\\\cline{5-5}
&&&& \tabincell{c}{ $t=0$, \\$\k g^+_s \oplus \k g^-_s $}
\\\hline
$\mathscr{G}_{jk,p}^{st}$
& $2$
&\tabincell{c}{
$\left\{\begin{array}{ll}\frac j2\in\overline{1,n-1}, &\lambda=1,\vspace{1mm}\\
\frac {j+1}2\in\overline{1,n}, &\lambda=-1,
\end{array}\right.$\\
$s\in\overline{1,N}$, $t\in\overline{0,n-1}$,\\ $k\in\overline{0,N-1}$, $p\in\Bbb{Z}_2$ }
&  $V_{jk}^\prime$
& $\Lambda_{s\,2t+1}$
\\\hline
$\mathscr{H}_{jk,p}^{st}$
& $2$
&\tabincell{c}{
$\left\{\begin{array}{ll}\frac j2\in\overline{1,n-1}, &\lambda=1,\vspace{1mm}\\
\frac {j+1}2\in\overline{1,n}, &\lambda=-1,
\end{array}\right.$\\
$s\in\overline{1,N}$, $t\in\overline{0,n-1}$,\\ $k\in\overline{0,N-1}$, $p\in\Bbb{Z}_2$ }
&  $V_{jk}^\prime$
& $\Lambda_{s\,2t+1}$
\\\hline
\tabincell{c}{$\mathscr{P}_{ijk,p}^{st}$}
& $2$
& \tabincell{c}{$\lambda=1$, $i,j,p\in\Bbb{Z}_2$,\\$s\in\overline{1,N}$,  $t\in\overline{0,n-1}$,\\
    $k\in\overline{0,N-1}$}
& $V_{ijk}^\prime\oplus V_{i+1\,j+1\,k}^\prime$ 
& $\Lambda_{s\,2t+1}$
\\\hline
\end{tabular}
\vspace{3ex}
\caption{Two dimensional simple Yetter-Drinfeld modules over $A_{N\,2n}^{\mu\lambda}$ . Here $g^{\pm}_s=x_{11}^{2s}\pm x_{12}^{2s}$, $h^{\pm}_s=x_{11}^{2s}\chi_{11}^{2n}
\pm\sqrt{\lambda} x_{12}^{2s}\chi_{12}^{2n}$.}
\label{YDMod1}
\end{table}

\section{Nichols algebras over simple Yetter-Drinfeld modules}
In this section, we investigate  Nichols algebras over  simple Yetter-Drinfeld modules
of $A_{N\,2n}^{\mu\lambda}$. 
So the Yetter-Drinfeld modules discussed in the section are those listed in Theorem \ref{MainYDM}.
For the knowledge about Nichols algebras, please refer to 
\cite{Heckenberger[2020]copyright2020} \cite{Andruskiewitsch2017}.
\subsection{Nichols algebras of diagonal type}
Let $V=\bigoplus_{i\in I}\k v_i$ be a vector space with a braiding $c(v_i\otimes v_j)=q_{ij}v_j\otimes v_i$, 
$q_{ij}\in\k^\times$, then the Nichols algebra $\mathfrak{B}(V)$ is  of diagonal type. 
Our results in this section heavily rely on Heckenberger's classification work  \cite{heckenberger2009classification}. To keep the article concise, we do not repeat this in the following proofs.  
As for the Nichols algebra $\ufo(8)$, we mean the Nichols algebra with the Dynkin diagram 
   $\xymatrix{ \overset{-\zeta^2}{\underset{\ }{\circ}}\ar  @{-}[rr]^{\zeta}  &&
\overset{-\zeta^2}{\underset{\ }{\circ}}}$,  $\zeta\in\mathbb{G}_{12}$, 
see \cite[Page 561]{Andruskiewitsch2017}. 

\begin{lemma}\label{Nichols_A}
$
\dim \mathfrak{B}\left(\mathscr{A}_{iik,p}^s\right)
=\left\{\begin{array}{ll}
\infty, & N\mid ks,\\
\frac{N}{(N,\, ks)}, &N\nmid ks.
\end{array}\right.
$
\end{lemma}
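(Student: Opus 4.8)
The plan is to reduce the computation of $\dim\mathfrak{B}\left(\mathscr{A}_{iik,p}^s\right)$ to the standard one-dimensional (rank-one diagonal) case, where $\mathfrak{B}(V)=\k[v]/(v^m)$ with $m$ the order of the self-braiding scalar $q=q_{11}$, and where $\dim\mathfrak{B}(V)=\infty$ precisely when $q=1$ or $q$ is not a root of unity. Since $\mathscr{A}_{iik,p}^s$ is a one-dimensional Yetter-Drinfeld module, its braiding is automatically diagonal, $c(v\otimes v)=q\,v\otimes v$ for a single scalar $q\in\k^\times$, so the only real task is to read off $q$ explicitly from the module/comodule structure described in the Appendix (which I am entitled to assume).

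First I would extract the coaction and action data for $\mathscr{A}_{iik,p}^s$. By the decomposition of $V_{ijk}\boxtimes A_{N\,2n}^{\mu\lambda}$ given in Section~3, the summand $\mathscr{A}_{iik,p}^s$ arises inside $V_{ijk}\boxtimes C_{s\,0}$ in the case $t=-1$, $j=i$, so its underlying vector $v$ is a specific combination of $v\boxtimes x_{11}^{2s}$ and $v\boxtimes x_{12}^{2s}$ (indexed by $p\in\Bbb{Z}_2$). Using the coaction formula \eqref{eq:coaction}, $\rho(v)=g\otimes v$ where $g$ is the relevant grouplike element $g^{\pm}_s=x_{11}^{2s}\pm x_{12}^{2s}\in G$; using the action formula \eqref{eq:action} together with the action of the generators on $V_{iik}$ (namely $x_{11},x_{22}\mapsto(-1)^i\omega^{4nk}$ from the Proposition), I can compute how $g$ acts back on $v$. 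The self-braiding scalar is then $q=g\cdot v / v$, i.e.\ the eigenvalue by which the degree-one grouplike $g$ acts on $v$.

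The key computation is to evaluate this eigenvalue. Since $\omega$ is a primitive $8nN$-th root of unity, $x_{11}$ acts on $V_{iik}$ by $(-1)^i\omega^{4nk}$, so $x_{11}^{2s}$ contributes $\omega^{8nks}$ (the sign $(-1)^{2si}=1$ drops out), and one finds $q=\omega^{8nks}=\zeta^{ks}$ where $\zeta:=\omega^{8n}$ is a primitive $N$-th root of unity (because $\omega$ has order $8nN$). Thus $q=\zeta^{ks}$ has order exactly $N/(N,ks)$ when $N\nmid ks$, and $q=1$ precisely when $N\mid ks$. Plugging into the rank-one Nichols algebra dictionary gives $\dim\mathfrak{B}=\infty$ when $N\mid ks$ (since $q=1$) and $\dim\mathfrak{B}=\mathrm{ord}(q)=N/(N,ks)$ otherwise, which is exactly the claimed formula.

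The main obstacle, and the only place requiring care, is the bookkeeping in the second step: correctly identifying which grouplike $g^{\pm}_s$ corresponds to the index $p$, and verifying that the braiding scalar really is $\omega^{8nks}$ rather than being perturbed by the $(-1)^i$ or $\sqrt{\lambda}$ factors that appear elsewhere in $G$. Because the two basis vectors $x_{11}^{2s}$ and $x_{12}^{2s}$ are combined symmetrically in $g^{\pm}_s$ and the action of $x_{12},x_{21}$ on $V_{iik}$ vanishes, I expect these sign and $\lambda$ contributions to cancel, leaving $q$ independent of $i$, $p$, $\mu$ and $\lambda$ — consistent with the stated formula, which depends only on $N$, $k$ and $s$. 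Confirming this cancellation from the explicit Appendix formulas is the one genuine verification; everything after that is the standard rank-one fact.
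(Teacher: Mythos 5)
Your proposal is correct and follows essentially the same route as the paper: the paper's proof is exactly the one-line computation $c(w\otimes w)=\left[(-1)^i\omega^{4kn}\right]^{2s}w\otimes w=\omega^{8nks}\,w\otimes w$, after which the claim follows from the standard rank-one dictionary, since $\omega^{8n}$ is a primitive $N$-th root of unity so $q=\omega^{8nks}$ has order $N/(N,ks)$ and equals $1$ precisely when $N\mid ks$. The cancellation you flag as the one point needing verification does occur, exactly as you predict: the sign enters as $(-1)^{2si}=1$ and no $\sqrt{\lambda}$ factor appears since $\mathscr{A}_{iik,p}^s$ sits over the grouplikes $g^{\pm}_s=x_{11}^{2s}\pm x_{12}^{2s}$ rather than $h^{\pm}_s$.
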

\begin{proof}
$c(w\otimes w)
=\left[(-1)^i\omega^{4kn}\right]^{2s}w\otimes w
=\omega^{8nks}w\otimes w$.
\end{proof}
\begin{corollary}
\begin{enumerate}
\item If $N=1$, then 
$
\dim \mathfrak{B}\left(\mathscr{A}_{iik,p}^s\right)=\infty.
$
\item If $N$ is a prime, then 
$\dim \mathfrak{B}\left(\mathscr{A}_{iik,p}^s\right)
=\left\{\begin{array}{ll}
\infty, & k=0\,\text{or}\, s=N, \\
N, &otherwise.
\end{array}\right.$
\end{enumerate}
\end{corollary}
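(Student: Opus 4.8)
The plan is to derive both parts directly from Lemma~\ref{Nichols_A} by specializing its dichotomy to the admissible ranges of the parameters. Recall from Theorem~\ref{MainYDM}(1)(a) that for $\mathscr{A}_{iik,p}^s$ the indices run over $k\in\overline{0,N-1}$ and $s\in\overline{1,N}$, so $0\le k\le N-1$ and $1\le s\le N$. The whole argument then reduces to deciding, within these ranges, when the divisibility condition $N\mid ks$ of the lemma holds, and to evaluating $N/(N,ks)$ otherwise.

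For part (1), I would simply note that when $N=1$ the condition $N\mid ks$ holds for every choice of $k$ and $s$, since $1$ divides every integer. Hence the first branch of Lemma~\ref{Nichols_A} always applies and $\dim\mathfrak{B}(\mathscr{A}_{iik,p}^s)=\infty$ unconditionally.

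For part (2), assuming $N$ is prime, the key step is the elementary fact that a prime $N$ divides a product $ks$ if and only if $N\mid k$ or $N\mid s$. I would then translate each divisibility into the parameter ranges: since $0\le k\le N-1$ we have $N\mid k$ exactly when $k=0$, and since $1\le s\le N$ we have $N\mid s$ exactly when $s=N$. Thus $N\mid ks$ holds precisely when $k=0$ or $s=N$, which places us in the infinite-dimensional branch. In the complementary case $k\ne 0$ and $s\ne N$ we have $N\nmid ks$, and because $N$ is prime its only divisors are $1$ and $N$; since $N\nmid ks$ forces $(N,ks)\ne N$, we conclude $(N,ks)=1$ and therefore $\dim\mathfrak{B}(\mathscr{A}_{iik,p}^s)=N/(N,ks)=N$.

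I do not anticipate a genuine obstacle here, as the statement is a clean specialization of Lemma~\ref{Nichols_A}; the only points requiring care are reading off the correct parameter ranges from Theorem~\ref{MainYDM} and invoking primality to collapse the greatest common divisor to $1$. The mild subtlety worth double-checking is the boundary behaviour of the ranges—that $k=0$ is admissible while $s=0$ is not, and that $s=N$ is admissible—so that the two sources of divisibility are correctly identified as $k=0$ and $s=N$ rather than any other endpoint.
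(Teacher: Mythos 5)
Your proposal is correct and matches the paper's (implicit) reasoning: the corollary is stated there without proof as an immediate specialization of Lemma \ref{Nichols_A}, and your argument---using that $1\mid ks$ always, that a prime $N$ divides $ks$ within the ranges $k\in\overline{0,N-1}$, $s\in\overline{1,N}$ exactly when $k=0$ or $s=N$, and that primality forces $(N,ks)=1$ otherwise---is precisely the intended derivation. Your attention to the boundary cases ($k=0$ admissible, $s=N$ admissible, $s=0$ excluded) is exactly the care the statement requires.
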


\begin{lemma}\label{Nichols_barA}
\[
\dim \mathfrak{B}\left(\bar{\mathscr{A}}_{ijk,p}^s\right)
=\left\{\begin{array}{ll}
\infty, & \lambda=1,  N\mid k(s+n), \\
\frac{N}{(N,\, k(s+n))}, &\lambda=1,  N\nmid k(s+n), \\
\infty, & \lambda=-1,  2N\mid [Nn+2k(s+n)], \\
\frac{2N}{(2N,\, Nn+2k(s+n))}, &\lambda=-1,  2N\nmid [Nn+2k(s+n)].
\end{array}\right.
\]
\end{lemma}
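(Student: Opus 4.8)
The module $\bar{\mathscr{A}}_{ijk,p}^s$ is one-dimensional, so exactly as in the proof of Lemma~\ref{Nichols_A} its Nichols algebra is a quantum line: writing the self-braiding as $c(w\otimes w)=q\,w\otimes w$ with $q\in\k^\times$, one has $\dim\mathfrak{B}(\bar{\mathscr{A}}_{ijk,p}^s)=\operatorname{ord}(q)$ when $q\neq 1$ and $\dim\mathfrak{B}(\bar{\mathscr{A}}_{ijk,p}^s)=\infty$ when $q=1$. Since $q$ will turn out to be a power of $\omega$, hence always a root of unity, the entire computation reduces to determining $q$ and reading off its order via the elementary identity $\operatorname{ord}(\omega^{c})=8nN/(8nN,c)$. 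The plan is therefore to compute the scalar $q$, split on $\lambda$, and simplify the resulting greatest common divisors.

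To find $q$, I would recall from the decomposition of $V_{ijk}\boxtimes A_{N\,2n}^{\mu\lambda}$ preceding Table~\ref{YDMod1} that $\bar{\mathscr{A}}_{ijk,p}^s$ arises in the case $t=n-1$, so its underlying comodule is the grouplike $h^{\pm}_s=x_{11}^{2s}\chi_{11}^{2n}\pm\sqrt{\lambda}\,x_{12}^{2s}\chi_{12}^{2n}$. For a one-dimensional Yetter--Drinfeld module with coaction $\rho(w)=h^{\pm}_s\otimes w$, the braiding is $c(w\otimes w)=(h^{\pm}_s\cdot w)\otimes w$; and because $h^{\pm}_s$ is grouplike, the Radford action \eqref{eq:action} collapses to the ordinary $A_{N\,2n}^{\mu\lambda}$-action of $h^{\pm}_s$ on $V_{ijk}$. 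As $x_{12}$ and $x_{21}$ act by $0$ on $V_{ijk}$, the summand $\sqrt{\lambda}\,x_{12}^{2s}\chi_{12}^{2n}$ is annihilated, leaving $q=\chi\!\left(x_{11}^{2s}\chi_{11}^{2n}\right)$ for the character $\chi$ of $V_{ijk}$. Using $x_{11}\mapsto(-1)^i\omega^{4nk}$, $x_{22}\mapsto(-1)^j\omega^{4nk}$ together with $\chi_{11}^{2n}=(x_{11}x_{22})^n$, a direct evaluation gives
\[
q=(-1)^{n(i+j)}\,\omega^{8nk(s+n)}.
\]

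The last step is to feed in the parity constraint on $(i,j)$ recorded in the same decomposition: $\bar{\mathscr{A}}$ occurs with $i=j$ when $\lambda=1$ and with $i=j+1$ when $\lambda=-1$, so $i+j$ is even exactly for $\lambda=1$ and odd exactly for $\lambda=-1$. For $\lambda=1$ the sign drops out, $q=\omega^{8nk(s+n)}$, and $\operatorname{ord}(q)=N/(N,k(s+n))$, which degenerates to $q=1$ precisely when $N\mid k(s+n)$. For $\lambda=-1$ I would rewrite $(-1)^{n}=\omega^{4n^2N}$ and absorb it into the exponent, obtaining $q=\omega^{4n(nN+2k(s+n))}$; factoring $4n$ out of $\gcd\bigl(8nN,\,4n^2N+8nk(s+n)\bigr)$ then yields $\operatorname{ord}(q)=2N/(2N,\,nN+2k(s+n))$, with $q=1$ precisely when $2N\mid nN+2k(s+n)$. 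These four alternatives are exactly the four lines of the statement.

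The step most likely to demand care is the $\lambda=-1$ computation: one must correctly convert $(-1)^n$ into $\omega^{4n^2N}$, merge it with $\omega^{8nk(s+n)}$, and pull $4n$ (rather than $8n$) out of the gcd so as to land on the modulus $2N$ instead of $N$. The parity bookkeeping of $i+j$ against $\lambda$ is the other place where a sign slip would silently propagate into the wrong case, so I would verify it directly against the decomposition rather than the condensed notation of Theorem~\ref{MainYDM}.
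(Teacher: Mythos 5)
Your proposal is correct and matches the paper's proof, which consists exactly of the braiding computation $c(w\otimes w)=(-1)^{(i+j)n}\omega^{8kn(s+n)}\,w\otimes w$ — identical to your $q$ — with the order-of-root-of-unity bookkeeping left implicit. Your extra steps (identifying $w=v\boxtimes h_s^{\pm}$ with $h_s^{\pm}$ grouplike, the parity of $i+j$ against $\lambda$, and factoring $4n$ out of the gcd in the $\lambda=-1$ case) are all accurate fillings-in of what the paper omits.
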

\begin{proof}
$
c(w\otimes w)
=(-1)^{(i+j)n}\omega^{8kn(s+n)}w\otimes w.
$
\end{proof}
\begin{corollary}
If  $N=1$, then 
$\dim \mathfrak{B}\left(\bar{\mathscr{A}}_{ijk,p}^s\right)
=\left\{\begin{array}{ll}
2, &\lambda=-1,\, n\,\text{odd},\\
\infty, &otherwise.
\end{array}\right.
$
\end{corollary}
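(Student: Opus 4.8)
The plan is to obtain the corollary as a direct specialization of Lemma~\ref{Nichols_barA} at $N=1$, so the only work is to collapse the four cases of that lemma according to the parities involved. First I would treat the case $\lambda=1$: here the relevant divisibility condition reads $N\mid k(s+n)$, and since $N=1$ divides every integer this holds unconditionally, placing us in the first line of the lemma and giving $\dim\mathfrak{B}(\bar{\mathscr{A}}_{ijk,p}^s)=\infty$ regardless of the values of $k$, $s$ and $n$.

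For $\lambda=-1$ the governing quantity is $Nn+2k(s+n)$, which at $N=1$ becomes $n+2k(s+n)$; since the summand $2k(s+n)$ is always even, its parity is dictated solely by $n$. I would therefore split on the parity of $n$: when $n$ is even, $n+2k(s+n)$ is even, so $2N=2$ divides it, again landing in the infinite line of the lemma. When $n$ is odd, $n+2k(s+n)$ is odd, so $2\nmid[n+2k(s+n)]$; the lemma then returns $\dim = \frac{2N}{(2N,\,Nn+2k(s+n))} = \frac{2}{(2,\,n+2k(s+n))}$, and because the denominator $(2,\text{odd})=1$ we obtain $\dim = 2$.

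Assembling the three outcomes gives exactly the stated dichotomy: the dimension equals $2$ precisely when $\lambda=-1$ and $n$ is odd, and is infinite in every other case. There is no real obstacle here --- the argument is a routine case analysis --- and the only point demanding a moment's care is recognizing that at $N=1$ the parity of $Nn+2k(s+n)$ is controlled entirely by $n$, which is what lets the $k$- and $s$-dependence of the general formula drop out.
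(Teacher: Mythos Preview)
Your proof is correct and follows the same approach the paper implicitly takes: the corollary is stated immediately after Lemma~\ref{Nichols_barA} without a separate proof, so it is meant to be read as exactly the specialization you carried out. One minor remark: for $N=1$ the index ranges force $k=0$ and $s=1$, so the quantity $Nn+2k(s+n)$ simply equals $n$, which makes the parity check even more immediate than in your write-up; but your slightly more general argument is of course valid as well.
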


\begin{lemma}\label{Nichols_B}
$
\dim \mathfrak{B}\left(\mathscr{B}_{ijk}^s\right)
=\left\{\begin{array}{ll} 
4, &N\mid 2ks, N\nmid ks, \text{(Cartan type $A_1\times A_1$)},\\ 
27, &N\mid 3ks, N\nmid ks, 
\text{(Cartan type $A_2$)},\\
\infty, & otherwise. 
\end{array}\right.
$
\end{lemma}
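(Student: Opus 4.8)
The plan is to compute the diagonal braiding of $\mathscr{B}_{ijk}^s$ explicitly, extract its generalized Dynkin diagram, and then read off the dimension from Heckenberger's classification \cite{heckenberger2009classification}. By Table \ref{YDMod1} the module is two-dimensional with comodule part $\k g_s^+\oplus\k g_s^-$, where $g_s^\pm=x_{11}^{2s}\pm x_{12}^{2s}$ is group-like. So first I would fix the comodule eigenbasis $w_\pm=v\boxtimes g_s^\pm$, for which \eqref{eq:coaction} and $\Delta(g_s^\pm)=g_s^\pm\otimes g_s^\pm$ give $\rho(w_\pm)=g_s^\pm\otimes w_\pm$. The Yetter-Drinfeld braiding is then $c(w_\alpha\otimes w_\beta)=(g_s^\alpha\cdot w_\beta)\otimes w_\alpha$, so the whole problem reduces to computing the action of $g_s^\pm$ on the basis.

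First I would evaluate this action using Figure \ref{figureFormulae}. Since $V_{ijk}$ sends $x_{12},x_{21}\mapsto 0$, the generators $x_{12}$ and $x_{21}$ fall into the ``$0$, otherwise'' case on all of $V_{ijk}\boxtimes A_{N\,2n}^{\mu\lambda}$; in particular $x_{12}^{2s}\cdot w_\beta=0$. On the other hand $x_{11}$ acts on the two module summands $V_{ijk}$ and $V_{jik}$ (recall $j=i+1$) by $(-1)^i\omega^{4nk}$ and $(-1)^j\omega^{4nk}$ respectively; raising to the even power $2s$ kills the signs, so $x_{11}^{2s}\cdot w_\beta=\omega^{8nks}w_\beta$. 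Hence $g_s^\alpha\cdot w_\beta=\omega^{8nks}w_\beta$ for all $\alpha,\beta$, i.e. the braiding matrix is the constant matrix with every entry $q:=\omega^{8nks}$.

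Next I would read off the generalized Dynkin diagram: two vertices both labelled $q$ joined by an edge labelled $\widetilde q=q_{+-}q_{-+}=q^2$. Writing $d:=(N,ks)$, the order of $q$ is $N/d$. Heckenberger's rank-two list then leaves exactly two finite cases. If $q^2=1$ and $q\neq1$, i.e. $N\mid 2ks$ and $N\nmid ks$, the edge is trivial and the diagram is $A_1\times A_1$ with both vertices $-1$, so $\dim\mathfrak{B}=2\cdot2=4$. If $q^3=1$ and $q\neq1$, i.e. $N\mid 3ks$ and $N\nmid ks$ (which then forces $N\nmid 2ks$, so $\operatorname{ord}(q)=3$), the relation $q^2=q^{-1}$ makes the Cartan entry $-1$, the diagram is $A_2$ with $q\in\Bbb{G}_3$, and $\dim\mathfrak{B}=3^{3}=27$. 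When $q=1$ ($N\mid ks$) the algebra is a polynomial algebra, and when $\operatorname{ord}(q)\geq4$ the diagram lies outside the finite list; both give $\infty$. These cases are mutually exclusive and exhaustive, matching the statement.

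The only delicate point is the last exclusion. I would argue that, with both vertices forced equal to $q$ and the edge forced to be exactly $q^2$, none of the connected rank-two super or $\ufo$ diagrams can occur once $\operatorname{ord}(q)\geq4$: those diagrams all require either a vertex labelled $-1$ (which here forces $q=-1$, hence $\operatorname{ord}(q)=2$) or a vertex/edge pattern incompatible with the rigid pair $(q,q;\,q^2)$. Checking this against Heckenberger's rank-two table is the main obstacle, but it is a finite inspection; the braiding computation itself is routine once the group-like eigenbasis $w_\pm=v\boxtimes g_s^\pm$ is in place.
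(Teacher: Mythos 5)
Your proposal is correct and follows essentially the same route as the paper: the paper's proof consists precisely of the computation $c(w_\alpha\otimes w_\beta)=\omega^{8nks}\,w_\beta\otimes w_\alpha$ (which your group-like eigenbasis argument via $g_s^\pm$ reproduces, the signs $(-1)^i,(-1)^j$ vanishing in the even power $2s$), followed by the appeal to Heckenberger's classification that the paper makes globally at the start of the section. Your explicit case analysis for $\operatorname{ord}(q)\geq 4$, including ruling out the super and $\ufo(8)$ diagrams for the rigid pattern $(q,q;q^2)$, is exactly the inspection the paper leaves implicit.
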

\begin{proof}
$c(w_\alpha\otimes w_\beta)=\omega^{8nks}w_\beta\otimes w_\alpha$ for 
$\alpha, \beta\in\overline{1,2}$.
\end{proof}

\begin{lemma}\label{Nichols_C}
Denote $d=2k(s+t+1)+N(i+j)(t+1)$, 
\[
\dim \mathfrak{B}\left(\mathscr{C}_{ijk,p}^{st}\right)
=\left\{\begin{array}{ll} 
4, &N\mid d, 2N\nmid d, \text{(Cartan type $A_1\times A_1$)},\\ 
27, &2N\mid 3d, 2N\nmid d,  \text{(Cartan type $A_2$)},\\
\infty, & otherwise. 
\end{array}\right.
\]
\end{lemma}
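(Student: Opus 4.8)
The plan is to compute the braiding $c$ on the two-dimensional module $\mathscr{C}_{ijk,p}^{st}$ explicitly, to recognise that it is of diagonal type with a uniform braiding matrix, and then to read off the dimension from Heckenberger's classification \cite{heckenberger2009classification}, exactly along the lines of the proof of Lemma \ref{Nichols_B}. By Table \ref{YDMod1} the space $\mathscr{C}_{ijk,p}^{st}=\k e_1\oplus\k e_2$ carries the module structure $V_{ijk}\oplus V_{i+1\,j+1\,k}$ and the comodule structure $\Lambda_{s\,2t+2}$ (and neither of these depends on $p$, so the answer should not either). First I would record the coaction from the defining formula of $\Lambda_{st}$,
\begin{align*}
\rho(e_1)&=x_{11}^{2s}\chi_{11}^{2t+2}\otimes e_1+x_{12}^{2s}\chi_{12}^{2t+2}\otimes e_2,\\
\rho(e_2)&=x_{11}^{2s}\chi_{22}^{2t+2}\otimes e_2+x_{12}^{2s}\chi_{21}^{2t+2}\otimes e_1.
\end{align*}

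The key observation is that $x_{12}$ and $x_{21}$ act by $0$ on both summands $V_{ijk}$ and $V_{i+1\,j+1\,k}$, so in the braiding $c(e_\alpha\otimes e_\beta)=(e_\alpha)_{(-1)}\cdot e_\beta\otimes (e_\alpha)_{(0)}$ the two off-diagonal comodule components (those carrying $x_{12}^{2s}$) annihilate $e_\beta$. Hence only the terms $x_{11}^{2s}\chi_{11}^{2t+2}$ and $x_{11}^{2s}\chi_{22}^{2t+2}$ contribute, and $c$ is already diagonal in $\{e_1,e_2\}$, say $c(e_\alpha\otimes e_\beta)=q_{\alpha\beta}\,e_\beta\otimes e_\alpha$. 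Both $\chi_{11}^{2t+2}$ and $\chi_{22}^{2t+2}$ contain $t+1$ letters $x_{11}$ and $t+1$ letters $x_{22}$, so on a one-dimensional summand of type $(i_\beta,j_\beta)$ they act by the same scalar; using $x_{11}\mapsto(-1)^{i_\beta}\omega^{4nk}$ and $x_{22}\mapsto(-1)^{j_\beta}\omega^{4nk}$ I expect every entry to equal
\[
q=(-1)^{(i+j)(t+1)}\omega^{8nk(s+t+1)}=\omega^{4nd},\qquad d=2k(s+t+1)+N(i+j)(t+1),
\]
where I used $-1=\omega^{4nN}$; the extra sign from the $V_{i+1\,j+1\,k}$-summand is $(-1)^{2(t+1)}=1$, so both basis vectors yield the same value and the braiding matrix is the uniform $\bigl(\begin{smallmatrix}q&q\\q&q\end{smallmatrix}\bigr)$. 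In particular the symmetry of this matrix explains why $p$ is irrelevant.

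At this point the generalised Dynkin diagram has both vertices labelled $q$ and connecting datum $q_{12}q_{21}=q^2$, which is precisely the configuration of Lemma \ref{Nichols_B}. Invoking Heckenberger's list, a uniform rank-two braiding is finite-dimensional only when $q=-1$, giving two disconnected vertices, i.e.\ Cartan type $A_1\times A_1$ of dimension $4$, or when $q\in\Bbb{G}_3$ (so that $q^2=q^{-1}$), giving Cartan type $A_2$ of dimension $27$. It then remains to convert these into arithmetic conditions through $\mathrm{ord}(q)=\tfrac{2N}{(2N,\,d)}$: the equality $q=-1$ amounts to $N\mid d$ and $2N\nmid d$, while $q\in\Bbb{G}_3$ amounts to $2N\mid 3d$ and $2N\nmid d$, and every other value of $q$ gives $\dim\mathfrak{B}=\infty$.

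The only point that needs genuine care, rather than routine bookkeeping, is the appeal to Heckenberger's classification in the last step: I must confirm that no uniform diagram $\circ^{q}\!-\!\circ^{q}$ with edge $q^2$ other than the two above occurs in the finite rank-two table. The potential competitors are $q$ of order $4$ and of order $6$, which turn the Cartan datum $q_{12}q_{21}=q_{11}^{-a_{12}}$ into $a_{12}=-2$, i.e.\ affine type $A_1^{(1)}$, and are therefore infinite-dimensional; ruling these out, together with the case where $q$ is not a root of unity, is a short inspection of the table rather than a real obstacle.
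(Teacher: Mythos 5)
Your proposal is correct and follows essentially the same route as the paper's own proof: the paper likewise observes that $x_{12},x_{21}$ kill both one-dimensional module summands, obtains the uniform diagonal braiding $c(w_\alpha\otimes w_\beta)=q\,w_\beta\otimes w_\alpha$ with $q=(-1)^{(i+j)(t+1)}\omega^{8nk(s+t+1)}=\omega^{4nd}$, and concludes via Heckenberger's classification that finiteness forces Cartan type $A_1\times A_1$ ($q=-1$) or $A_2$ ($q\in\Bbb{G}_3$). Your translation of these conditions into $N\mid d$, $2N\nmid d$ and $2N\mid 3d$, $2N\nmid d$ via $\mathrm{ord}(q)=\frac{2N}{(2N,\,d)}$, and your check that orders $4$ and $6$ only yield affine $A_1^{(1)}$, are exactly the bookkeeping the paper leaves implicit.
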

\begin{proof}
$c(w_\alpha\otimes w_\beta)=q w_\beta\otimes w_\alpha$ for $\alpha, \beta\in\overline{1,2}$, 
$q=(-1)^{(i+j)(t+1)}\omega^{8nk(s+t+1)}$.  \\
$\dim \mathfrak{B}\left(\mathscr{C}_{ijk,p}^{st}\right)<\infty\iff$  
$\mathfrak{B}\left(\mathscr{C}_{ijk,p}^{st}\right)$
is of Cartan type $A_1\times A_1$ or $A_2$.
\end{proof}

\begin{lemma}\label{Nichols_D}
Denote $\alpha=8nk(s+t+1)-2jN(t+1)$, $\beta=8nk(s+t+1)+2jN(t+1)$.
$
\dim \mathfrak{B}\left(\mathscr{D}_{jk,p}^{st}\right)<\infty
$ 
if and only if one of the following conditions holds. 
\begin{enumerate}
\item   $8nN\nmid \alpha$, $8nN\mid 2\beta$, Cartan type $A_1\times A_1$;
\item $8nN\nmid \alpha$, $8nN\mid (\alpha+2\beta)$, Cartan type $A_2$;
\item  $\alpha\equiv 4nN\mod 8nN$,  $2\beta\nequiv 0\,\text{and}\,4nN\mod 8nN$, 
Super type ${\bf A}_{2}(q;\I_2)$;
\item $\alpha-4\beta\equiv 12\beta\equiv 4nN\mod 8nN$, $8\beta\nequiv 0\mod 8nN$.
The Nichols algebras $\ufo(8)$, see \cite[Page 561]{Andruskiewitsch2017}.
\end{enumerate}
\end{lemma}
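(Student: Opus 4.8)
The plan is to reduce the statement about $\dim\mathfrak{B}\left(\mathscr{D}_{jk,p}^{st}\right)$ to a purely combinatorial condition on a $2\times 2$ braiding matrix and then invoke Heckenberger's classification of finite-dimensional Nichols algebras of diagonal type. First I would compute the braiding explicitly. Since $\mathscr{D}_{jk,p}^{st}$ is two-dimensional, arising inside the decomposition of $V_{jk}\boxtimes A_{N\,2n}^{\mu\lambda}$, I would take the basis $\{w_1,w_2\}$ described in the appendix and use the Yetter-Drinfeld action and coaction formulas \eqref{eq:action}--\eqref{eq:coaction} together with the explicit matrices for $x_{11},x_{22}$ (and the vanishing of $x_{12},x_{21}$) on $V_{jk}$ from the Proposition to read off the self-braidings $c(w_\mu\otimes w_\nu)=q_{\mu\nu}\,w_\nu\otimes w_\mu$. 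The key arithmetic input is that the exponents of $\omega$ that appear are exactly $\alpha$ and $\beta$ as defined in the statement: I expect to find $q_{11}q_{22}=\omega^{\alpha+\beta}$ up to a controlled sign, and the diagonal entries $q_{11},q_{22}$ expressible through $\alpha$ and $\beta$ individually, with $\omega$ a primitive $8nN$-th root of unity so that every exponent is read modulo $8nN$.

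Once the braiding matrix $(q_{\mu\nu})$ is in hand, the Nichols algebra is of diagonal type with a rank-two Dynkin diagram determined by $q_{11}$, $q_{22}$, and the product $q_{12}q_{21}$. The heart of the argument is then to match this against Heckenberger's list of rank-two diagonal braidings with finite-dimensional Nichols algebra. The four cases in the statement correspond precisely to the four relevant rank-two types that can occur here: Cartan type $A_1\times A_1$ (disconnected diagram, $q_{12}q_{21}=1$ with each $q_{\mu\mu}$ a root of unity of the appropriate finite order), Cartan type $A_2$ (the standard connected Cartan datum), Super type ${\bf A}_2(q;\I_2)$, and the exceptional Nichols algebra $\ufo(8)$ whose diagram I would recognise from the $\zeta\in\mathbb{G}_{12}$ data recalled just before Lemma~\ref{Nichols_D}. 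For each type I would translate the defining condition on $(q_{11},q_{22},q_{12}q_{21})$ into the stated congruences: for instance the $A_1\times A_1$ condition "$8nN\nmid\alpha$, $8nN\mid 2\beta$" should encode that the two vertices are disconnected (forcing a relation among the exponents captured by $8nN\mid 2\beta$) while each vertex carries a nontrivial root of unity (the $8nN\nmid\alpha$ clause), and the $\ufo(8)$ conditions "$\alpha-4\beta\equiv 12\beta\equiv 4nN$" should pin down that the diagonal entries are $-\zeta^2$ and the off-diagonal product is $\zeta$ for a twelfth root of unity.

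The main obstacle I anticipate is the bookkeeping in the converse direction: showing that these four cases are \emph{exhaustive} among the finite-dimensional possibilities, i.e. that no other entry of Heckenberger's table can be realised by a braiding of the special form produced by $\mathscr{D}_{jk,p}^{st}$. This is a finiteness/rigidity check rather than a single computation. Because $\omega$ is a fixed primitive $8nN$-th root of unity, the entries $q_{11},q_{22}$ are severely constrained — they are powers of $\omega$ determined by $\alpha,\beta$ — and many of Heckenberger's generic rank-two families simply cannot be hit. I would organise this by first noting which diagrams in the classification have all vertices labelled by roots of unity of order dividing $8nN$ and with the forced relation $q_{11}=q_{22}$ (or the analogous symmetry coming from the two basis vectors being exchanged by the comodule structure), and then eliminate the remaining types one by one. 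I expect the symmetry $q_{11}=q_{22}$ inherent in the module $V_{jk}$ to do most of the pruning, leaving exactly the four listed families.

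Finally, for the affirmative cases I would confirm the diagrams actually satisfy the relevant finiteness criterion (Cartan matrix of finite type, or membership in the super/exceptional list), and for the single nontrivial dimension claim the statement only asserts finiteness of $\mathfrak{B}\left(\mathscr{D}_{jk,p}^{st}\right)$, so I need not compute dimensions here; the stated congruences plus Heckenberger's theorem give "$\dim<\infty$" immediately, and the side conditions such as "$2\beta\nequiv 0\,\text{and}\,4nN\bmod 8nN$" serve to separate the Super type from degenerate overlaps with the Cartan cases. The proof therefore reduces to one explicit braiding computation followed by a careful but finite case-matching against a known classification.
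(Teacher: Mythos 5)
Your proposal follows essentially the same route as the paper: the paper's proof consists precisely of computing the braiding, which is of diagonal type with $c(w_\mu\otimes w_\mu)=\omega^{\alpha}w_\mu\otimes w_\mu$ and $c(w_\mu\otimes w_\nu)=\omega^{\beta}w_\nu\otimes w_\mu$ for $\mu\neq\nu$ (so $q_{11}=q_{22}=\omega^{\alpha}$, $q_{12}q_{21}=\omega^{2\beta}$, slightly cleaner than your guessed form $q_{11}q_{22}=\omega^{\alpha+\beta}$), and then matching the resulting symmetric rank-two Dynkin diagram against Heckenberger's classification, exactly as you plan, with the symmetry $q_{11}=q_{22}$ doing the pruning that leaves only the four listed types. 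Your blind reconstruction is correct and, if anything, more explicit about the exhaustiveness check than the paper, which declares at the start of the section that appeals to \cite{heckenberger2009classification} are left implicit.
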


\begin{proof}
The braiding is  given by 
\begin{align*}
c(w_1\otimes w_1)
&=\omega^{\alpha}w_1\otimes w_1,&
c(w_1\otimes w_2)
&=\omega^{\beta}w_2\otimes w_1,\\
c(w_2\otimes w_1)
&=\omega^{\beta}w_1\otimes w_2,&
c(w_2\otimes w_2)
&=\omega^{\alpha}w_2\otimes w_2.
\end{align*}
\end{proof}

\begin{corollary}\label{proofUFO8}
If $\mathfrak{B}\left(\mathscr{D}_{jk,p}^{st}\right)$ is isomorphic to the Nichols algebras $\ufo(8)$, $5\nmid N$ and $17\nmid n$, then $8\mid N$, $4\mid n$.
\end{corollary}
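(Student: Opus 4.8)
The plan is to read off the two claimed divisibilities from the numerical description of the $\ufo(8)$ case in Lemma~\ref{Nichols_D}(4). The braiding on $\mathscr{D}_{jk,p}^{st}$ is symmetric, with $q_{11}=q_{22}=\omega^{\alpha}$ and $q_{12}q_{21}=\omega^{2\beta}$, so an isomorphism $\mathfrak{B}(\mathscr{D}_{jk,p}^{st})\cong\ufo(8)$ means that the generalised Dynkin diagram with two equal vertices $\omega^{\alpha}$ and edge $\omega^{2\beta}$ agrees with the symmetric diagram of $\ufo(8)$; equivalently $\omega^{2\beta}$ has order $12$ and $\omega^{\alpha}=-\omega^{4\beta}$ has order $3$, which is exactly the content of the congruences $\alpha-4\beta\equiv 12\beta\equiv 4nN$ and $8\beta\not\equiv 0\pmod{8nN}$. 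The hypotheses $5\nmid N$ and $17\nmid n$ are what make this identification unambiguous against Heckenberger's list: they remove the primes whose presence in the order of $\omega$ would otherwise let the same symmetric diagram be matched to a different finite-dimensional row, so that ``$\cong\ufo(8)$'' genuinely forces case (4).

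Granting (4), I would treat its congruences as linear relations in the two ``atoms'' $P:=8nk(s+t+1)$ and $Q:=2jN(t+1)$, where $\alpha=P-Q$ and $\beta=P+Q$, so that they read $-3P-5Q\equiv 4nN$ and $12P+12Q\equiv 4nN\pmod{8nN}$. Eliminating $Q$ (twelve times the first plus five times the second) gives $24P\equiv 4nN$, and after substituting $P=8nk(s+t+1)$ and cancelling the common factor $4n$,
\[
48\,k(s+t+1)\equiv N\pmod{2N}.
\]
Eliminating $P$ (four times the first plus the second) gives $8Q\equiv 4nN$, and after substituting $Q=2jN(t+1)$ and cancelling $4N$,
\[
4\,j(t+1)\equiv n\pmod{2n}.
\]
Each relation exhibits $N$ (respectively $n$) as an explicit integer that differs from a multiple of $2N$ (respectively $2n$), hence as $N$ (respectively $n$) times an odd integer. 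Comparing $2$-adic valuations then gives $v_2(N)=4+v_2\!\big(k(s+t+1)\big)\ge 4$ and $v_2(n)=2+v_2\!\big(j(t+1)\big)\ge 2$, whence $8\mid N$ and $4\mid n$, as asserted. (In fact one gets the stronger $16\mid N$, and $8\mid n$ as soon as one uses that $j$ is even, since $\tfrac j2\in\overline{1,n-1}$.)

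The elimination and the valuation count are routine; the delicate part is the preliminary identification. One must verify, via Heckenberger's classification, that among the finite-dimensional rank-two diagonal types a \emph{symmetric} diagram with vertex of order $3$ and edge of order $12$ can only be $\ufo(8)$, and it is here that $5\nmid N$ and $17\nmid n$ are spent, to discard the spurious order coincidences in the part of $\omega$ of order prime to $6$ that would permit a competing match. Once case (4) is secured, the conclusion follows immediately from the two boxed congruences, so I expect the main obstacle to be this order-by-order matching rather than the final arithmetic.
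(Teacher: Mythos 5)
Your arithmetic is correct, and it takes a genuinely cleaner route than the paper's. The paper works with the same two congruences (written there as $15\times 8nk(s+t+1)+17\times 2jN(t+1)\equiv 0$ and $12\times 8nk(s+t+1)+12\times 2jN(t+1)\equiv 4nN \pmod{8nN}$), but instead of eliminating one variable it extracts the divisibilities $8n\mid 17\times 2jN(t+1)$ and $4N\mid 15\times 8nk(s+t+1)$, introduces the integers $r_1,r_2$, and then must divide by $17$ and by $15$ --- and it is exactly there that the hypotheses $17\nmid n$ and $5\nmid N$ are spent, to guarantee $17\mid r_1$ and $5\mid r_2$ so that the substituted left-hand sides are divisible by $32n$ and $16N$ respectively; comparing $2$-adic valuations with $4nN$ then yields $8\mid N$ and $4\mid n$. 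Your integer linear elimination ($12\cdot(\mathrm{i})+5\cdot(\mathrm{ii})$ and $4\cdot(\mathrm{i})+(\mathrm{ii})$) bypasses all of this: the consequences $24P\equiv 4nN$ and $8Q\equiv 4nN \pmod{8nN}$ are unconditional, and your valuation count is sound (note $k=0$ is excluded outright, since $48k(s+t+1)\equiv N\pmod{2N}$ fails for $k=0$). So your proof never uses $5\nmid N$ or $17\nmid n$ at all, and it buys the strictly stronger conclusions $16\mid N$ and, since $\tfrac j2\in\overline{1,n-1}$ forces $j$ even, $8\mid n$ --- both consistent with the paper's numerical examples $(n,N)=(8,48)$, where $v_2(N)=4$ matches your formula $v_2(N)=4+v_2(k(s+t+1))$ with $k(s+t+1)$ odd in every listed triple.

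One small correction to your framing: your guess that $5\nmid N$ and $17\nmid n$ are needed in the \emph{identification} step, to disambiguate the symmetric diagram against Heckenberger's list, does not reflect the paper. Both the paper and you treat ``isomorphic to $\ufo(8)$'' as meaning that the case (4) congruences of Lemma \ref{Nichols_D} hold (which is the correct reading, since twist-equivalence preserves $q_{ii}$ and $q_{12}q_{21}$, so the diagram with vertices $\omega^{\alpha}=-\zeta^{2}$ of order $3$ and edge $\omega^{2\beta}=\zeta$ of order $12$ is exactly encoded by $\alpha-4\beta\equiv 12\beta\equiv 4nN$ and $8\beta\not\equiv 0 \pmod{8nN}$); the hypotheses enter only in the paper's arithmetic, and your elimination shows they are in fact superfluous. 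This is not a gap in your proof --- unused hypotheses are harmless --- but the ``delicate preliminary identification'' you anticipate is not where the paper spends them.
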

\begin{remark}
When $n=8$, $N=48$, $j=2$ and $k\leq 11$, then $\mathfrak{B}\left(\mathscr{D}_{jk,p}^{st}\right)$ is isomorphic to the Nichols algebras $\ufo(8)$ in case that $(k,s,t)$ is in the set 
\begin{align*}
\left\{\begin{array}{l}
( 1 , 18 , 0 ),
( 1 , 22 , 2 ),
( 1 , 26 , 4 ),
( 1 , 30 , 6 ),
( 1 , 34 , 0 ),
( 1 , 38 , 2 ),
( 1 , 42 , 4 ),\\
( 1 , 46 , 6 ),
( 5 , 2 , 2 ),
( 5 , 6 , 0 ),
( 5 , 10 , 6 ),
( 5 , 14 , 4 ),
( 5 , 22 , 0 ),
( 5 , 30 , 4 ),\\
( 5 , 34 , 2 ),
( 5 , 42 , 6 ),
( 7 , 4 , 0 ),
( 7 , 12 , 6 ),
( 7 , 20 , 4 ),
( 7 , 28 , 2 ),
( 7 , 28 , 6 ),\\
( 7 , 36 , 0 ),
( 7 , 36 , 4 ),
( 7 , 44 , 2 ),
( 11 , 8 , 2 ),
( 11 , 8 , 4 ),
( 11 , 24 , 0 ),
( 11 , 24 , 4 ),\\
( 11 , 24 , 6 ),
( 11 , 40 , 0 ),
( 11 , 40 , 2 ),
( 11 , 40 , 6 )
\end{array}\right\}
\end{align*}
\end{remark}
\begin{proof}
Since $\alpha-4\beta\equiv 12\beta\equiv 4nN\mod 8nN$, 
\begin{align*}
&\quad \left\{\begin{array}{l}
15\times 8nk(s+t+1)+17\times 2jN(t+1)\equiv 0 \mod 8nN,\\
12\times 8nk(s+t+1)+12\times 2jN(t+1)\equiv 4nN \mod 8nN,
\end{array}\right.\\
&\Rightarrow 8n\mid 17\times 2jN(t+1),\quad 4N\mid 15\times 8nk(s+t+1),\\
&\Rightarrow   17\times 2jN(t+1)=8nr_1, \quad  15\times 8nk(s+t+1)=4Nr_2,\\
&\Rightarrow 
\left\{\begin{array}{l}
12\times 8nk(s+t+1)+12\times \frac{8nr_1}{17}\equiv 4nN \mod 8nN,\\
12\cdot \frac{4Nr_2}{15}+24jN(t+1)\equiv 4nN\mod 8nN,\\
\end{array}\right.\\
&\Rightarrow 32n\mid 4nN,\quad 16N\mid 4nN,\\
&\Rightarrow 8\mid N,\quad 4\mid n.
\end{align*}
\end{proof}

\begin{lemma}\label{Nichols_E}
Denote $\alpha=8nk(s+t)+2jNt$, $\beta=8nk(s+t)-2jNt$.
$
\dim \mathfrak{B}\left(\mathscr{E}_{jk,p}^{st}\right)<\infty
$ 
if and only if one of the following conditions holds. 
\begin{enumerate}
\item   $8nN\nmid \alpha$, $8nN\mid 2\beta$, Cartan type $A_1\times A_1$;
\item $8nN\nmid \alpha$, $8nN\mid (\alpha+2\beta)$, Cartan type $A_2$;
\item  $\alpha\equiv 4nN\mod 8nN$,  $2\beta\nequiv 0\,\text{and}\,4nN\mod 8nN$, 
Super type ${\bf A}_{2}(q;\I_2)$;
\item $\alpha-4\beta\equiv 12\beta\equiv 4nN\mod 8nN$, $8\beta\nequiv 0\mod 8nN$.
The Nichols algebras $\ufo(8)$.
\end{enumerate}
\end{lemma}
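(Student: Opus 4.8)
The plan is to reduce the statement to a rank-two diagonal computation identical in shape to Lemma~\ref{Nichols_D}. First I would read off the module and comodule structure of $\mathscr{E}_{jk,p}^{st}$ from the appendix---it sits inside $V_{jk}\boxtimes A_{N\,2n}^{\mu\lambda}$ with underlying module $V_{jk}$ and comodule $\Lambda_{s\,2t}$ (resp.\ $\k g^+_s\oplus\k g^-_s$ when $t=0$)---and feed this through Radford's formulas \eqref{eq:action}--\eqref{eq:coaction} to compute the braiding on a homogeneous basis $w_1,w_2$. I expect to obtain the diagonal braiding
\begin{align*}
c(w_1\otimes w_1)&=\omega^{\alpha}w_1\otimes w_1, & c(w_1\otimes w_2)&=\omega^{\beta}w_2\otimes w_1,\\
c(w_2\otimes w_1)&=\omega^{\beta}w_1\otimes w_2, & c(w_2\otimes w_2)&=\omega^{\alpha}w_2\otimes w_2,
\end{align*}
with $\alpha,\beta$ as in the statement; the index $p$ merely labels which isomorphic copy is chosen and does not enter the self-braiding. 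This is the same matrix shape as in Lemma~\ref{Nichols_D}, so the remaining analysis transports once the new $\alpha,\beta$ are inserted.

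Next I would invoke Heckenberger's classification of finite-dimensional rank-two Nichols algebras of diagonal type, for which the only pertinent invariants are the vertex labels $q_{11}=q_{22}=\omega^{\alpha}$ and the edge label $q_{12}q_{21}=\omega^{2\beta}$. The decisive feature is that both vertices carry the \emph{same} label $\omega^{\alpha}$, which trims Heckenberger's list down to the four symmetric diagrams in the statement. Matching each one is then elementary, using that $\omega$ is a primitive $8nN$-th root of unity, whence $\omega^{4nN}=-1$:
\begin{enumerate}
\item disconnected Cartan $A_1\times A_1$ demands $q_{12}q_{21}=1$ and $q_{11}\neq 1$, i.e.\ $8nN\mid 2\beta$, $8nN\nmid\alpha$;
\item Cartan $A_2$ demands $q_{12}q_{21}=q_{11}^{-1}$ with $q_{11}\neq 1$, i.e.\ $8nN\mid(\alpha+2\beta)$, $8nN\nmid\alpha$;
\item super type ${\bf A}_2(q;\I_2)$ demands $q_{11}=-1$ and $q_{12}q_{21}\neq\pm 1$, i.e.\ $\alpha\equiv 4nN$ and $2\beta\not\equiv 0,\,4nN\mod 8nN$;
\item $\ufo(8)$ demands $q_{11}=-\zeta^2$ and $q_{12}q_{21}=\zeta$ for $\zeta\in\mathbb{G}_{12}$, which unwinds to $\alpha-4\beta\equiv 4nN$, $12\beta\equiv 4nN$ and $8\beta\not\equiv 0\mod 8nN$.
\end{enumerate}

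The genuine content sits in two places. The opening braiding computation must be carried out attentively from the appendix data, since this is where the comodule-degree shift relative to $\mathscr{D}_{jk,p}^{st}$ and the sign pattern of the $jN$-terms get pinned down; but this is routine bookkeeping, not a conceptual hurdle. The real obstacle is the exhaustiveness of the second step: I must certify that \emph{no other} row of Heckenberger's rank-two tables is compatible with the equal-vertex condition $q_{11}=q_{22}$ together with a realizable edge label, so that precisely these four types exhaust the finite-dimensional possibilities. For the exceptional $\ufo(8)$ row this demands pinning $\mathrm{ord}(\zeta)$ to be exactly $12$: the congruence $12\beta\equiv 4nN$ only yields $\zeta^6=-1$, hence $\mathrm{ord}(\zeta)\in\{4,12\}$, and the supplementary inequality $8\beta\not\equiv 0\mod 8nN$ (that is, $\zeta^4\neq 1$) is exactly what excludes order $4$ and forces $\zeta\in\mathbb{G}_{12}$. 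Assembling the four mutually exclusive cases yields the claimed equivalence.
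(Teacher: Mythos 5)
Your proposal is correct and takes essentially the same route as the paper: the paper's proof consists precisely of computing, via Radford's formulas and the appendix data, the symmetric diagonal braiding $q_{11}=q_{22}=\omega^{\alpha}$, $q_{12}q_{21}=\omega^{2\beta}$, and then silently invoking Heckenberger's rank-two classification (the author states at the start of Section~4.1 that this is used without repetition). Your additional bookkeeping---restricting to the symmetric diagrams of the table and pinning $\mathrm{ord}(\zeta)=12$ for $\ufo(8)$ via $12\beta\equiv 4nN$ together with $8\beta\not\equiv 0 \bmod 8nN$---merely makes explicit what the paper leaves implicit, matching the parallel treatment of $\mathscr{D}_{jk,p}^{st}$ in Lemma~\ref{Nichols_D} and Corollary~\ref{proofUFO8}.
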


\begin{proof}
The braiding is  given by 
\begin{align*}
c(w_1\otimes w_1)
&=\omega^{\alpha}w_1\otimes w_1,&
c(w_1\otimes w_2)
&=\omega^{\beta}w_2\otimes w_1,\\
c(w_2\otimes w_1)
&=\omega^{\beta}w_1\otimes w_2,&
c(w_2\otimes w_2)
&=\omega^{\alpha}w_2\otimes w_2.
\end{align*}
\end{proof}

\begin{corollary}
If $\mathfrak{B}\left(\mathscr{E}_{jk,p}^{st}\right)$ is isomorphic to the Nichols algebras $\ufo(8)$, 
$5\nmid N$ and $17\nmid n$, then $8\mid N$, $4\mid n$.
\end{corollary}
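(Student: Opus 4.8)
The plan is to exploit the fact that Lemma~\ref{Nichols_E} produces a braiding of exactly the same diagonal shape as Lemma~\ref{Nichols_D}: the self-braidings are $\omega^{\alpha}$ and the cross-braidings are $\omega^{\beta}$, the only difference being that now $\alpha=8nk(s+t)+2jNt$ and $\beta=8nk(s+t)-2jNt$. Consequently the criterion isolating $\ufo(8)$ is verbatim item~(4) of Lemma~\ref{Nichols_E}, namely $\alpha-4\beta\equiv 12\beta\equiv 4nN \pmod{8nN}$ together with $8\beta\not\equiv 0$, and the corollary can be proved by repeating the elimination carried out in Corollary~\ref{proofUFO8} with these new values of $\alpha$ and $\beta$.

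Concretely, I would first combine the two congruences $\alpha-4\beta\equiv 4nN$ and $12\beta\equiv 4nN$ into the single relation $\alpha-16\beta\equiv 0 \pmod{8nN}$. Substituting the present $\alpha,\beta$ gives $15\cdot 8nk(s+t)\equiv 17\cdot 2jNt \pmod{8nN}$, which is the $\mathscr{E}$-analogue of the first displayed line of Corollary~\ref{proofUFO8}; the sign change relative to the $\mathscr{D}$-case, coming from the opposite sign conventions in $\alpha,\beta$, is irrelevant for all the ensuing divisibility statements. Reducing this relation modulo $8n$ and modulo $4N$ separately then yields $8n\mid 17\cdot 2jNt$ and $4N\mid 15\cdot 8nk(s+t)$.

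The decisive step is to cancel the prime factors $17$ and $5$. Since $17$ is prime with $17\nmid 8$ and, by hypothesis, $17\nmid n$, the divisibility $8n\mid 17\cdot 2jNt$ forces $8n\mid 2jNt$: writing $17\cdot 2jNt=8nr_1$ one has $17\mid 8nr_1$, whence $17\mid r_1$. Dually, writing $15\cdot 8nk(s+t)=4Nr_2$ and using $15=3\cdot 5$ together with $5\nmid N$ forces $5\mid r_2$. Feeding these cleaned-up expressions back into $12\beta\equiv 4nN \pmod{8nN}$---once eliminating $2jNt$ through $r_1$ and once eliminating $8nk(s+t)$ through $r_2$---makes the left-hand side a multiple of $32n$ in the first substitution and a multiple of $16N$ in the second. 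Comparing with the right-hand side $4nN$ then gives $32n\mid 4nN$ and $16N\mid 4nN$, that is, $8\mid N$ and $4\mid n$.

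The routine part is the bookkeeping of the congruences; the only genuinely delicate point is the cancellation in the previous paragraph, and it is precisely there that the hypotheses $5\nmid N$ and $17\nmid n$ are indispensable: without them the factors $15$ and $17$ cannot be removed and the argument breaks down. I would therefore structure the write-up so that these two coprimality conditions are invoked exactly at the moment the factors $5$ and $17$ are divided out, mirroring the displayed chain of Corollary~\ref{proofUFO8} line for line with $(s+t+1,t+1)$ replaced by $(s+t,t)$.
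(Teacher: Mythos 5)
Your proposal is correct and is essentially the paper's own argument: the paper proves this corollary simply by the remark that ``the proof is similar to Corollary~\ref{proofUFO8}'', i.e.\ by repeating the displayed chain there with $(s+t+1,\,t+1)$ replaced by $(s+t,\,t)$, which is exactly what you do, including combining the two congruences into $\alpha-16\beta\equiv 0 \pmod{8nN}$, extracting $8n\mid 17\cdot 2jNt$ and $4N\mid 15\cdot 8nk(s+t)$, and cancelling the factors $17$ and $5$ via the hypotheses $17\nmid n$ and $5\nmid N$ to force $32n\mid 4nN$ and $16N\mid 4nN$. Your observation that the sign flip in $\beta$ is immaterial for the divisibility statements is accurate, so no further comment is needed.
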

\begin{remark}
The proof is similar to  the Corollary \ref{proofUFO8}. 
If $n=8$, $N=48$, $j=2$ and $k\leq 11$, then $\mathfrak{B}\left(\mathscr{E}_{jk,p}^{st}\right)$ is isomorphic to the Nichols algebras $\ufo(8)$ in case that $(k,s,t)$ is in the set 
\begin{align*}
\left\{\begin{array}{l}
( 1 , 4 , 3 ),
( 1 , 4 , 7 ),
( 1 , 12 , 1 ),
( 1 , 12 , 5 ),
( 1 , 20 , 3 ),
( 1 , 28 , 1 ),
( 1 , 36 , 7 ),\\
( 1 , 44 , 5 ),
( 5 , 8 , 3 ),
( 5 , 8 , 5 ),
( 5 , 24 , 1 ),
( 5 , 24 , 5 ),
( 5 , 24 , 7 ),
( 5 , 40 , 1 ),\\
( 5 , 40 , 3 ),
( 5 , 40 , 7 ),
( 7 , 2 , 5 ),
( 7 , 6 , 7 ),
( 7 , 10 , 1 ),
( 7 , 14 , 3 ),
( 7 , 18 , 5 ),\\
( 7 , 22 , 7 ),
( 7 , 42 , 1 ),
( 7 , 46 , 3 ),
( 11 , 2 , 3 ),
( 11 , 6 , 1 ),
( 11 , 10 , 7 ),
( 11 , 14 , 5 ),\\
( 11 , 22 , 1 ),
( 11 , 30 , 5 ),
( 11 , 34 , 3 ),
( 11 , 42 , 7 )
\end{array}\right\}
\end{align*}
\end{remark}

\subsection{The Nichols algebras of type $V_{abe}$}\label{typeVabe}
Let $V_{abe}=\k v_1\oplus \k v_2$ be a vector space with a braiding given by 
\begin{align*}
c(v_1\otimes v_1)&=a v_2\otimes v_2,\quad 
&c(v_1\otimes v_2)&=b   v_1\otimes v_2,\\
c(v_2\otimes v_1)&=b v_2\otimes v_1,\quad 
&c(v_2\otimes v_2)&=e   v_1\otimes v_1,
\end{align*}
then we call that the braided vector space $(V_{abe}, c)$ is of type $V_{abe}$. The braided vector space $V_{abe}$ is isomorphic to  $V_{ae\,b\,1}$ via $v_1\mapsto \sqrt{e}v_1$, $v_2\mapsto v_2$. If $ae=b^2$, then 
$V_{abe}$ is of diagonal type and
\[
\dim \mathfrak{B}(V_{abe})
=\left\{\begin{array}{ll}
4, & b=-1,\quad (\text{Cartan type $A_1\times A_1$}),\\
27, & b^3=1\neq b,\quad  (\text{Cartan type $A_2$}),\\
\infty, & \text{otherwise}.
\end{array}\right.
\]
If $b^2\neq ae$, $\mathfrak{B}(V_{abe})$ is obviously not of rack type,  please refer to 
the formula \eqref{fomulaeVabe} for more details.

\begin{lemma}\label{Nichols_G}
$\mathfrak{B}\left(\mathscr{G}_{jk,p}^{st}\right)$ is of type $V_{abe}$, where 
\begin{align*}
ae=\bar{\mu}^{2s+2t+1}\omega^{4kn(4s+4t+2)+jN(-2-4t)},\quad
b=(-1)^p\bar{\mu}^{s+t+\frac12}\omega^{4nk(2s+2t+1)+jN(2t+1)}.
\end{align*}
\end{lemma}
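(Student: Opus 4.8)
The plan is to compute the Yetter--Drinfeld braiding $c(u\otimes v)=u_{(-1)}\cdot v\otimes u_{(0)}$ directly from the module and comodule data of $\mathscr{G}_{jk,p}^{st}$ recorded in the Appendix and in Table~\ref{YDMod1}: as an $A_{N\,2n}^{\mu\lambda}$-module $\mathscr{G}_{jk,p}^{st}\cong V_{jk}^\prime$, and as a comodule it is $\Lambda_{s\,2t+1}$. Writing $\mathscr{G}_{jk,p}^{st}=\k w_1\oplus\k w_2$ for the comodule weight basis, the coaction \eqref{eq:coaction} takes the $\Lambda_{s\,2t+1}$-shape, so that $(w_1)_{(-1)}$ runs over $\{x_{11}^{2s}\chi_{11}^{2t+1},\,x_{12}^{2s}\chi_{12}^{2t+1}\}$ paired with $w_1,w_2$, and $(w_2)_{(-1)}$ over $\{x_{11}^{2s}\chi_{22}^{2t+1},\,x_{12}^{2s}\chi_{21}^{2t+1}\}$ paired with $w_2,w_1$; the action $\cdot$ is the Yetter--Drinfeld action \eqref{eq:action}, which in the $V_{jk}^\prime$-basis is given by the matrices of the proposition, with $x_{11}$ and $x_{22}$ acting as $0$.

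First I would exploit the decisive feature of $V_{jk}^\prime$: the generators $x_{11}$ and $x_{22}$ act as $0$. Since $\chi_{11}^{2t+1}$ and $\chi_{22}^{2t+1}$ are words of positive length in $x_{11},x_{22}$, they annihilate the module, so every comodule term carrying such a coefficient drops out of the braiding. What survives are exactly the ``crossed'' coefficients $x_{12}^{2s}\chi_{12}^{2t+1}$ and $x_{12}^{2s}\chi_{21}^{2t+1}$, built from $x_{12},x_{21}$, which act invertibly and interchange the two weight lines. Consequently $c(w_1\otimes w_1)$ is a multiple of $w_2\otimes w_2$ and $c(w_2\otimes w_2)$ a multiple of $w_1\otimes w_1$, while $c(w_1\otimes w_2)$ and $c(w_2\otimes w_1)$ are diagonal. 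This is precisely the shape of a type $V_{abe}$ braiding as defined in Section~\ref{typeVabe}, so $\mathscr{G}_{jk,p}^{st}$ is of type $V_{abe}$; it remains to evaluate the scalars.

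To obtain the scalars I would reduce the surviving words. Using $x_{12}^2=\bar{\mu}\omega^{8kn}$ as a scalar, $x_{12}^{2s}$ contributes $\bar{\mu}^s\omega^{8kns}$, and the alternating products factor as $\chi_{12}^{2t+1}=(x_{12}x_{21})^t x_{12}$ and $\chi_{21}^{2t+1}=(x_{21}x_{12})^t x_{21}$, where $x_{12}x_{21}$ acts as a scalar on each weight line; iterating these scalars $t$ times and tracking the powers of $\bar{\mu}$, $\omega$ and the sign yields $a$ and $e$. Their product is basis-independent --- a diagonal rescaling $w_1\mapsto\xi_1 w_1$, $w_2\mapsto\xi_2 w_2$ multiplies $a$ by $\xi_1^2\xi_2^{-2}$ and $e$ by $\xi_2^2\xi_1^{-2}$ --- and it equals the stated $ae=\bar{\mu}^{2s+2t+1}\omega^{4kn(4s+4t+2)+jN(-2-4t)}$. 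The coefficient $b$ is read off from $c(w_1\otimes w_2)$; the braid equation, automatic for a Yetter--Drinfeld module, forces $c(w_2\otimes w_1)$ to carry the same coefficient (indeed the hexagon applied to $w_1\otimes w_1\otimes w_1$ gives $a b e=a b' e$ with $ae\neq 0$), so $b$ is well defined and equals $b=(-1)^p\bar{\mu}^{s+t+\frac12}\omega^{4nk(2s+2t+1)+jN(2t+1)}$, the sign $(-1)^p$ recording the choice of weight basis indexed by $p$.

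The main obstacle is the scalar bookkeeping rather than any conceptual point: one must carry the powers of $\bar{\mu}=\omega^{4n}$ (when $\mu=-1$) and of $\omega$ through the length-$(2t+1)$ alternating products without sign or exponent errors, and --- most delicately --- pin down the single off-diagonal scalar $b$ in the realized basis. This last step requires the explicit representatives $w_1,w_2\in V_{jk}^\prime\boxtimes A_{N\,2n}^{\mu\lambda}$ from the Appendix, so that \eqref{eq:action} and the coproduct are applied to the correct elements; it is this explicit normalization that produces the half-integer exponent $\bar{\mu}^{s+t+\frac12}$ and the sign $(-1)^p$ in $b$, which a naive weight-basis computation does not see directly.
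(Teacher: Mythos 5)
Your proposal is correct and follows essentially the same route as the paper, which states Lemma \ref{Nichols_G} without a written proof precisely because the proof is this direct computation: one reads off the braiding $c(u\otimes v)=u_{(-1)}\cdot v\otimes u_{(0)}$ from the appendix realization of $\mathscr{G}_{jk,p}^{st}$ (module $V_{jk}^\prime$, comodule $\Lambda_{s\,2t+1}$), where the vanishing of $x_{11},x_{22}$ on $V_{jk}^\prime$ kills the diagonal comodule legs and the surviving odd-length words in $x_{12},x_{21}$ force exactly the $V_{abe}$ shape, after which the scalars follow from $x_{12}^2=\bar{\mu}\omega^{8kn}$ and the normalizations of $w_1,w_2$. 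Your braid-equation (hexagon) argument that the two off-diagonal scalars coincide is a tidy shortcut for what the paper's implicit computation verifies directly, and your observation that only $ae$ and $b$ are invariant under diagonal rescaling correctly explains why the lemma records only those two quantities.
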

\begin{remark}
Notice that $\frac{ae}{b^2}=\omega^{-4jN(2t+1)}$, we have 
 $b=\omega^{2jN(2t+1)}$ for some suitable $p\in\Bbb{Z}_2$ under the case 
 $ae=1$. Furthermore, if we take $t=0$ and 
 $
j=\left\{\begin{array}{ll}
 2,& \lambda=1,\\
1,& \lambda=-1,\\
 \end{array}\right.
 $
 then 
  $
b\in \left\{\begin{array}{ll}
\Bbb{G}_{2n},& \lambda=1,\\
\Bbb{G}_{4n},& \lambda=-1.\\
 \end{array}\right.
 $
 So 
 $\dim \mathfrak{B}\left(\mathscr{G}_{jk,p}^{st}\right)=
 \left\{\begin{array}{ll}
 (2n)^2,& \lambda=1,\\
 (4n)^2,& \lambda=-1,\\
 \end{array}\right.
 $
 for suitable choice of  $(n,N,j,s,t,k,p)$.
 
Similarly, if $b=-1$, then 
 $\dim \mathfrak{B}\left(\mathscr{G}_{jk,p}^{st}\right)=
 \left\{\begin{array}{ll}
4n,& \lambda=1,\\
8n,& \lambda=-1,\\
 \end{array}\right.
 $
 for suitable choice of  $(n,N,j,s,t,k,p)$.
\end{remark}

\begin{corollary}\label{Vabe}
Suppose $ae\neq b^2=(ae)^{-1}$ and $b\in\Bbb{G}_{n}$ for $n\geq 5$, then 
\[
\dim \mathfrak{B}(V_{abe})=\infty.
\]
\end{corollary}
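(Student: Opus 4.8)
The plan is to reduce to a normalized braiding and then invoke the quoted result of Masuoka. First I would use the stated isomorphism $V_{abe}\cong V_{ae\,b\,1}$, which preserves the isomorphism type of the braided vector space and hence the dimension of the Nichols algebra, to assume from the outset that $e=1$ and $a=ae=b^{-2}$, the last equality being exactly the hypothesis $b^2=(ae)^{-1}$. In this normalization the braiding reads $c(v_1\otimes v_1)=b^{-2}v_2\otimes v_2$, $c(v_2\otimes v_2)=v_1\otimes v_1$, and $c(v_i\otimes v_j)=b\,v_i\otimes v_j$ for $i\neq j$. Since $b\in\Bbb{G}_n$ with $n\geq 5$ we have $b^4\neq 1$, so $ae=b^{-2}\neq b^2$ and the braiding is genuinely non-diagonal; this is precisely the regime in which the finite-dimensional diagonal computations of the displayed formula preceding Lemma~\ref{Nichols_G} do not apply.

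Before invoking the external input I would record the eigenvalue structure of $c$ on $V_{abe}\otimes V_{abe}$, both as a sanity check and to locate why $n\geq 5$ is the right hypothesis. On the plane $\k(v_1\otimes v_1)\oplus\k(v_2\otimes v_2)$ the braiding acts by $\left(\begin{smallmatrix}0&1\\ b^{-2}&0\end{smallmatrix}\right)$, with eigenvalues $\pm b^{-1}$, while on $\k(v_1\otimes v_2)\oplus\k(v_2\otimes v_1)$ it acts as $b\cdot\mathrm{id}$. Thus the eigenvalues of $c$ are $b,b,b^{-1},-b^{-1}$, and none equals $-1$ because $b\neq\pm1$; consequently $\ker(\mathrm{id}+c)=0$ and $\mathfrak{B}(V_{abe})$ has no quadratic relations at all. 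Finiteness could therefore be forced only in higher degree, so no elementary termination argument is available, which is exactly what makes the appeal to Masuoka's theorem necessary rather than cosmetic.

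The main step, and the main obstacle, is to match $(V_{b^{-2}\,b\,1},c)$ with the braided vector space treated in the quoted result of Masuoka: concretely, to realize it as the relevant Yetter--Drinfeld module, compatibly with the Suzuki-algebra realization used throughout the paper, so that Masuoka's hypotheses are literally met, and then to read off that the associated Nichols algebra, equivalently its bosonization, is infinite-dimensional for $b\in\Bbb{G}_n$, $n\geq5$. The bookkeeping on the order of $b$ is essential and explains the bound: for $n=4$ one has $b^{-2}=b^2$, so the braiding degenerates to diagonal type and is governed by the displayed formula rather than by Masuoka's theorem, whereas for $n=3$ the hypotheses of the theorem are not satisfied and the case stays among the ``unknown'' line of \eqref{fomulaeVabe}; only $n\geq5$ falls into the infinite regime covered by the result. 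A tempting self-contained alternative would be to attach to $c^2$ the diagonal braiding with matrix $\left(\begin{smallmatrix}b^{-2}&b^2\\ b^2&b^{-2}\end{smallmatrix}\right)$, note that it yields the affine Cartan datum $\left(\begin{smallmatrix}2&-2\\ -2&2\end{smallmatrix}\right)$ and hence is not of finite type by Heckenberger's classification for $n\geq5$, and then transport infiniteness back to $c$; but both the reducibility of the Cartan integer when $\mathrm{ord}(b^2)$ is small for even $n$ and the passage from $c^2$ to $c$ are delicate, and it is exactly these points that Masuoka's result disposes of cleanly, so I would follow the author and cite it.
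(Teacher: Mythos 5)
There is a genuine gap here, and it sits exactly where you located ``the main step'': you defer the crux to matching the braided vector space with ``the braided vector space treated in the quoted result of Masuoka,'' but no such theorem of Masuoka exists, and this misidentification means the plan cannot be executed as stated. In the paper, Masuoka's result \cite{MR1800713} says only that $A_{1n}^{++}$ is a $2$-cocycle deformation of $\k^{D_{4n}}$; it says nothing about Nichols algebras of spaces of type $V_{abe}$, so there are no ``hypotheses of Masuoka's theorem'' that your normalized braiding could literally meet, and nothing to ``read off.'' The infinite-dimensionality input is a different theorem entirely, the dichotomy of Andruskiewitsch--Fantino \cite{andruskiewitsch2007pointed}: Nichols algebras of two-dimensional Yetter--Drinfeld modules over the dihedral group $D_{4n}$ are either $4$-dimensional or infinite-dimensional. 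The actual proof chains three steps you never perform: (i) realize the braided vector space as the Yetter--Drinfeld module $\mathscr{G}_{jk,p}^{st}$ over the Suzuki algebra $A_{1n}^{++}$ (take $N=1$, $\mu=\lambda=1$ in Lemma \ref{Nichols_G}), where $b=(-1)^p\omega^{j(2t+1)}$ and $ae=\omega^{-2j(2t+1)}$, so the constraint $b^2=(ae)^{-1}$ holds automatically on this family; (ii) check the covering statement that with $j=8$ the values $(-1)^p\omega^{8(2t+1)}$ exhaust all primitive $n$-th roots of unity, which is where $n\geq 5$ genuinely enters --- the parameter range $\frac j2\in\overline{1,n-1}$ makes $j=8$ admissible only for $n\geq 5$ --- rather than through any degeneration of the braiding; (iii) transport through the cocycle deformation, which induces a braided tensor equivalence of the Yetter--Drinfeld categories of $A_{1n}^{++}$ and $\k^{D_{4n}}$, and then invoke the dichotomy, excluding the $4$-dimensional alternative because $b\in\Bbb{G}_n$ with $n\geq 5$ rules out the Cartan type $A_1\times A_1$ case (which forces $b=-1$).

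Your preliminary observations are correct but do not advance the proof: the normalization to $V_{ae\,b\,1}$, the eigenvalues $b,b,\pm b^{-1}$ of $c$, and the absence of quadratic relations are fine sanity checks, and your remark that $n=4$ is vacuous (there $b^2=b^{-2}$ forces $ae=b^2$, contradicting the hypothesis) is accurate. But your proposed explanation of the bound $n\geq 5$ --- that for $n=3$ ``the hypotheses of the theorem are not satisfied'' --- is unsupported precisely because you do not know which theorem is being applied; as noted above, the bound comes from the admissibility of $j=8$ in the realization step. The alternative route via $c^2$ and an affine Cartan matrix is flagged by you as delicate and is not carried out, so it cannot substitute for the missing argument. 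In short: the proposal is a plan whose sole substantive step is both unexecuted and aimed at the wrong theorem, so it does not constitute a proof.
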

\begin{proof}
$\mathfrak{B}\left(\mathscr{G}_{jk,p}^{st}\right)$ is of type $V_{abe}$, 
and $b=(-1)^p\omega^{j(2t+1)}$, $ae=\omega^{-2j(2t+1)}$ in case 
$\mu=\lambda=1=N$. According to  \cite{andruskiewitsch2007pointed}, 
Nichols algebras  associated with two dimensional Yetter-Drinfeld modules, over  the dihedral group $D_{4n}$ of order $4n$,  are either $4$-dimension or infinite dimension. 
And $A_{1n}^{++}$ is isomorphic to a $2$-cocycle deformation of $\k^{D_{4n}}$
\cite{MR1800713}, so $\dim \mathfrak{B}\left(\mathscr{G}_{jk,p}^{st}\right)=\infty$ under the provided conditions. 
If we  take $j=8$, then $b=(-1)^p \omega^{8(2t+1)}$ covers all $n$-th primitive roots of unity for $n\geq 5$.  
\end{proof}
\begin{remark}
The corollary provided a correct proof for 
\[
\dim\mathfrak{B}\left(W_1^a\right)=\infty=\dim\mathfrak{B}\left(W_2^a\right),\] 
see \cite[Page 278]{Shi2019}, where the braiding of $\mathfrak{B}\left(W_1^a\right)$
should be corrected as 
\begin{align*}
&c\left(w_1^{(1)}\otimes w_1^{(1)}\right)=
-\theta w_2^{(1)}\otimes w_2^{(1)},\quad
c\left(w_1^{(1)}\otimes w_2^{(1)}\right)=
-\theta w_1^{(1)}\otimes w_2^{(1)},\\
&c\left(w_2^{(1)}\otimes w_1^{(1)}\right)=
-\theta w_2^{(1)}\otimes w_1^{(1)},\quad
c\left(w_2^{(1)}\otimes w_2^{(1)}\right)=
\theta w_1^{(1)}\otimes w_1^{(1)}.
\end{align*}
The parameters $\theta=\pm \frac{\sqrt{2}\left(\sqrt{-1}-1\right)}{2}$ are $8$-th primitive roots of unity. 
\end{remark}

\begin{lemma}\label{Nichols_H}
$\mathfrak{B}\left(\mathscr{H}_{jk,p}^{s}\right)$  is  of type $V_{abe}$ with 
\[
ae=\bar{\mu}^{2s+2t+1}\omega^{4kn(4t+4s+2)+jN(4t+2)},\quad
b=(-1)^p\bar{\mu}^{s+t+\frac12}\omega^{4kn(2t+2s+1)+jN(-1-2t)}.
\]
\end{lemma}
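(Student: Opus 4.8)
The plan is to compute the Yetter--Drinfeld braiding $c(m\otimes m')=m_{(-1)}\cdot m'\otimes m_{(0)}$ directly on the two-dimensional module $\mathscr{H}_{jk,p}^{st}$, using the module action \eqref{eq:action} and the comodule coaction \eqref{eq:coaction} that realize it as a summand of $V_{jk}^\prime\boxtimes A_{N\,2n}^{\mu\lambda}$ inside the decomposition $V_{jk}^\prime\boxtimes C_{s\,2t+1}\simeq\bigoplus_{p=0}^1(\mathscr{G}_{jk,p}^{st}\oplus\mathscr{H}_{jk,p}^{st})$. Concretely, I would first take from the appendix the explicit basis $\{w_1,w_2\}$ of $\mathscr{H}_{jk,p}^{st}$ together with its coaction $\rho(w_\alpha)=(w_\alpha)_{(-1)}\otimes(w_\alpha)_{(0)}$, whose left tensor factors are the coalgebra elements coming from $\Lambda_{s\,2t+1}$, namely $x_{11}^{2s}\chi_{11}^{2t+1}$, $x_{12}^{2s}\chi_{12}^{2t+1}$, $x_{11}^{2s}\chi_{22}^{2t+1}$ and $x_{12}^{2s}\chi_{21}^{2t+1}$.

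Then I would feed each grading element into the module action recorded in the appendix (ultimately governed by item~(4) of the Proposition, where on $V_{jk}^\prime$ the generators $x_{11},x_{22}$ act by zero while $x_{12},x_{21}$ act by the off-diagonal matrices with entries built from $\bar\mu$, $\omega^{2(4kn-jN)}$, $\omega^{2jN}$ and $\omega^{8kn}$). Because the surviving generators interchange the two basis vectors, the diagonal brackets $c(w_1\otimes w_1)$ and $c(w_2\otimes w_2)$ land in $w_2\otimes w_2$ and $w_1\otimes w_1$ respectively --- this is exactly what forces the $V_{abe}$ shape rather than diagonal type --- while the mixed brackets $c(w_1\otimes w_2)$ and $c(w_2\otimes w_1)$ return $w_1\otimes w_2$ and $w_2\otimes w_1$ with a common scalar $b$. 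Collecting scalars yields $a$ from $c(w_1\otimes w_1)$, $e$ from $c(w_2\otimes w_2)$ and $b$ from the mixed terms; since $a$ and $e$ separately depend on the normalization of $w_1$ against $w_2$, only the product $ae$ and the scalar $b$ are the genuine invariants of the braided vector space, and these are what the stated formulas record.

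The entire computation runs parallel to Lemma~\ref{Nichols_G}, the sole change being the parity/index shift distinguishing the $\mathscr{H}$-summand from the $\mathscr{G}$-summand, which is precisely what turns the exponent $jN(2t+1)$ in $b$ for $\mathscr{G}$ into $jN(-1-2t)$ for $\mathscr{H}$, and similarly flips the sign of the corresponding contribution to $ae$. I expect the only real difficulty to be bookkeeping: tracking the powers of the primitive $8nN$-th root $\omega$ and of $\bar\mu=\omega^{4n}$ (when $\mu=-1$) through the comultiplications of $\chi_{12}^{2t+1}$ and $\chi_{21}^{2t+1}$, and in particular justifying the half-integer exponent $\bar\mu^{s+t+\frac12}$ appearing in $b$. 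That half-power signals a square-root choice pinned down only up to sign, and the resulting sign ambiguity is exactly what the factor $(-1)^p$ absorbs; checking that the two values $p\in\Bbb{Z}_2$ consistently account for the two admissible signs is the one step deserving genuine care rather than mechanical expansion.
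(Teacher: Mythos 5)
Your proposal is correct and matches the paper's (largely implicit) argument: the paper states Lemma~\ref{Nichols_H} without a written proof precisely because it is the routine computation you describe --- evaluating $c(u\otimes v)=u_{(-1)}\cdot v\otimes u_{(0)}$ on the appendix basis of $\mathscr{H}_{jk,p}^{st}$, with the coaction coming from $\Lambda_{s\,2t+1}$ and the action from formula \eqref{eq:action} together with item (4) of the Proposition, exactly as in the parallel Lemma~\ref{Nichols_G}. You also correctly identify the two points that need care, namely that only $ae$ and $b$ are invariants of the braided vector space (so recording them suffices) and that the half-power $\bar{\mu}^{s+t+\frac12}$ reflects the square-root choice $\sqrt{\bar{\mu}}$ already built into the appendix basis, whose sign ambiguity is absorbed by $(-1)^p$.
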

\begin{remark}
From observation, we have $\frac{ae}{b^2}=\omega^{4jN(2t+1)}$. 
\begin{enumerate}
\item Suppose  $ae=1$, then 
$
\dim \mathfrak{B}\left(\mathscr{H}_{jk,p}^{s}\right)
=\left\{\begin{array}{ll}
(4n)^2, & \lambda=-1,\\
(2n)^2, & \lambda=1,
\end{array}\right.
$
under suitable choice of  $(n,N,j,s,t,k,p)$.
\item Suppose  $b=-1$, then 
$
\dim \mathfrak{B}\left(\mathscr{H}_{jk,p}^{s}\right)
=\left\{\begin{array}{ll}
8n, & \lambda=-1,\\
4n, & \lambda=1,
\end{array}\right.
$
under suitable choice of  $(n,N,j,s,t,k,p)$.
\end{enumerate}
\end{remark}

\begin{lemma}\label{Nichols_P}
Denote $q=(-1)^{p+(i+j)(t+\frac12)+j}\tilde{\mu}^{2s+2t+1}\omega^{4nk(2s+2t+1)}$, then 
\[
\dim \mathfrak{B}\left(\mathscr{P}_{ijk,p}^{st}\right)
=\left\{\begin{array}{ll}
4, & q=-1, (\text{Cartan type $A_1\times A_1$}),\\
27, & q^3=1\neq q,  (\text{Cartan type $A_2$}),\\
\infty, & \text{otherwise}.
\end{array}\right.
\]
\end{lemma}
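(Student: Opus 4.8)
The plan is to reduce the computation of $\dim \mathfrak{B}\left(\mathscr{P}_{ijk,p}^{st}\right)$ to the diagonal-type classification already invoked throughout this section. First I would read off the braiding of the module $\mathscr{P}_{ijk,p}^{st}$ from its explicit description in the Appendix, exactly as was done for the companion modules $\mathscr{C}$, $\mathscr{D}$, $\mathscr{E}$ in Lemmas \ref{Nichols_C}--\ref{Nichols_E}. The underlying module is $V_{ijk}^\prime\oplus V_{i+1\,j+1\,k}^\prime$ with comodule structure $\Lambda_{s\,2t+1}$ (see Table \ref{YDMod1}), so the braided space is two-dimensional with basis $w_1,w_2$. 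The key point to establish is that this braiding is \emph{diagonal}, i.e.\ $c(w_\alpha\otimes w_\beta)=q_{\alpha\beta}\,w_\beta\otimes w_\alpha$ with all four scalars governed by a single parameter $q$, just as the statement asserts via the formula $q=(-1)^{p+(i+j)(t+\frac12)+j}\tilde{\mu}^{2s+2t+1}\omega^{4nk(2s+2t+1)}$.

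The main computation is to verify that $q_{11}=q_{12}=q_{21}=q_{22}=q$. Concretely, the braiding on a Yetter-Drinfeld module is $c(x\otimes y)=x_{(-1)}\cdot y\otimes x_{(0)}$, so for each basis pair I would apply the comodule coaction coming from $\Lambda_{s\,2t+1}$ and then act by the resulting group-like (or coalgebra) elements through the module structure of $V_{ijk}^\prime$ given in the Proposition. Since the generators $x_{12},x_{21}$ act on $V_{ijk}^\prime$ by scalar multiples of $\tilde{\mu}\,\omega^{4nk}$ (with signs $(-1)^i,(-1)^j$) and the relevant comodule coefficients are products of the $x_{11}^{2s}\chi^{t}$-type elements, every braiding coefficient collapses to a single scalar. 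The bookkeeping of the exponents of $\omega$, the powers of $\tilde{\mu}$, and the accumulated signs $(-1)^{p+(i+j)(t+\frac12)+j}$ is the most error-prone part: the half-integer exponent $t+\tfrac12$ signals that a square-root of $\mu$ (encoded in $\tilde\mu$, a $2n$-th root when $\mu=-1$) enters, so care is needed that the exponents remain consistent modulo $8nN$.

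Once the braiding is shown to be diagonal with the stated $q$, the dimension formula follows immediately from Heckenberger's classification of finite-dimensional rank-one Nichols algebras of diagonal type \cite{heckenberger2009classification}: a one-parameter diagonal braiding in a single $q\in\k^\times$ gives $\dim\mathfrak{B}=\infty$ unless $q$ is a nontrivial root of unity realizing Cartan type $A_1\times A_1$ (that is $q=-1$, dimension $4$) or Cartan type $A_2$ (that is $q^3=1\neq q$, dimension $27$). This is precisely the trichotomy already recorded in the diagonal-type preamble of the section and reused in Lemma \ref{Nichols_P}, so no independent argument is needed beyond citing it.

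I expect the genuine obstacle to be purely computational rather than conceptual: confirming that the two summands $V_{ijk}^\prime$ and $V_{i+1\,j+1\,k}^\prime$ interact so as to produce equal off-diagonal coefficients $q_{12}=q_{21}$ and that these coincide with the diagonal ones $q_{11}=q_{22}$, thereby collapsing to a single scalar. In the analogous Lemma \ref{Nichols_P} for the primed modules the mechanism is the parity shift $i\mapsto i+1$, $j\mapsto j+1$ that pairs the two module components, and the factor $(-1)^{(i+j)(t+\frac12)+j}$ tracks exactly how these parities combine with the half-integer twist from $\chi^{2t+1}$. Checking that this sign emerges correctly, and that $\tilde\mu^{2s+2t+1}$ is the right power once the coaction is applied, is where I would be most careful; everything downstream is a direct appeal to the cited classification.
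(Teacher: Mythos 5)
There is a genuine gap: your key structural claim --- that the braiding of $\mathscr{P}_{ijk,p}^{st}$ is diagonal in the basis $w_1,w_2$, i.e.\ $c(w_\alpha\otimes w_\beta)=q_{\alpha\beta}\,w_\beta\otimes w_\alpha$ with $q_{11}=q_{12}=q_{21}=q_{22}=q$ --- is false, and the verification you propose would fail at the first coefficient. The paper's proof instead asserts that $\mathscr{P}_{ijk,p}^{st}$ is of type $V_{qqq}$, meaning (in the conventions of Section \ref{typeVabe}) $c(w_1\otimes w_1)=q\,w_2\otimes w_2$, $c(w_1\otimes w_2)=q\,w_1\otimes w_2$, $c(w_2\otimes w_1)=q\,w_2\otimes w_1$, $c(w_2\otimes w_2)=q\,w_1\otimes w_1$, which is non-diagonal in this basis. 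One can see the diagonality must fail directly from the structure you yourself cite: since $x_{11},x_{22}$ act as $0$ on $V_{ijk}^\prime$, every coefficient of the coaction lying in the span of the $x_{11}^a\chi_{22}^b$ acts as zero on $V_{ijk}^\prime\boxtimes A_{N\,2n}^{\mu\lambda}$, so in $c(u\otimes u')=u_{(-1)}\cdot u'\otimes u_{(0)}$ only the $x_{12}^a\chi_{21}^b$-type leg of $\rho(w_1)$ survives, and from the Appendix that leg is attached to $w_2$ (one computes $\rho(w_1)=x_{11}^{2s+1}\chi_{22}^{2t}\otimes w_1+c_1\,x_{12}^{2s+1}\chi_{21}^{2t}\otimes w_2$ with $c_1=(-1)^p/\sqrt{(-1)^{i+j}}$). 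Hence $c(w_1\otimes w_\beta)$ always has second tensor factor $w_2$, never $w_1$; in particular $c(w_1\otimes w_1)\notin\k\,w_1\otimes w_1$.

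The repair is short, and it is exactly the paper's route: identify the braiding as type $V_{abe}$ with $a=b=e=q$, observe $ae=q^2=b^2$, so by the preamble of Section \ref{typeVabe} the braided space is of diagonal type after a change of basis --- concretely $u_\pm=w_1\pm w_2$ gives
\begin{align*}
c(u_\pm\otimes u_\pm)=q\,u_\pm\otimes u_\pm,\qquad
c(u_\pm\otimes u_\mp)=-q\,u_\mp\otimes u_\pm,
\end{align*}
a diagonal braiding with vertices $q$ and $q_{12}q_{21}=q^2$ --- and then Heckenberger's classification yields dimension $4$ iff $q=-1$ (Cartan $A_1\times A_1$), $27$ iff $q^3=1\neq q$ (Cartan $A_2$), and $\infty$ otherwise. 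Note that your final trichotomy is numerically correct only by coincidence: the constant-$q$ diagonal braiding you posited has the same generalized Dynkin diagram $(q,q^2,q)$ as the genuinely occurring diagonalization of $V_{qqq}$, so the downstream appeal to \cite{heckenberger2009classification} lands on the right answer despite resting on a wrong description of the braiding. As written, though, the proof does not go through: the diagonalization step (or equivalently the reduction to the $ae=b^2$ case of $V_{abe}$) is the missing idea.
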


\begin{proof}
$\mathscr{P}_{ijk,p}^{st}$  is of type $V_{qqq}$, 
so $\mathfrak{B}\left(\mathscr{P}_{ijk,p}^{st}\right)$ is finite 
dimensional iff  $q=-1$ or $q^3=1\neq q$. 
\end{proof}

\subsection{The Nichols algebras over $\mathscr{I}_{pjk}^{s}$}\label{section_Nichos_I}
\begin{lemma}\label{racklemma}
Let $(V,c)$ be a braided vector space such that 
$c(x\otimes y)\in \k f_x(y)\otimes x$, where the map 
$f_x: V\to V$ is bijective for any $x\in V$ under a fixed basis. Then $(V,\vartriangleright)$ is a rack 
with $x\vartriangleright y=f_x(y)$ under the fixed basis. 
\end{lemma}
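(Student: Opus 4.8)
The plan is to verify the two defining properties of a rack directly from the hypothesis on the braiding. Recall that a rack is a set $(X,\vartriangleright)$ together with a binary operation such that each left-translation $y\mapsto x\vartriangleright y$ is a bijection and the self-distributivity law $x\vartriangleright(y\vartriangleright z)=(x\vartriangleright y)\vartriangleright(x\vartriangleright z)$ holds. First I would fix the basis promised in the statement, and for each basis element $x$ define $x\vartriangleright y:=f_x(y)$. Bijectivity of each left-translation is immediate: the hypothesis already asserts that $f_x\colon V\to V$ is bijective for every $x$, so nothing further is needed for the first rack axiom.

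The substance of the argument is the self-distributivity law, which I would extract from the braid equation satisfied by $c$. Since $(V,c)$ is a braided vector space, $c$ obeys the Yang--Baxter equation on $V\otimes V\otimes V$, namely $(c\otimes\mathrm{id})(\mathrm{id}\otimes c)(c\otimes\mathrm{id})=(\mathrm{id}\otimes c)(c\otimes\mathrm{id})(\mathrm{id}\otimes c)$. My plan is to evaluate both sides on a pure tensor $x\otimes y\otimes z$ of basis elements and compare the results. Using the special form $c(x\otimes y)=\gamma_{x,y}\,f_x(y)\otimes x$ (with scalar $\gamma_{x,y}\in\k^\times$), each side telescopes into a single basis tensor with an accumulated scalar. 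Tracking the first tensor factor on the left-hand side produces $f_x\bigl(f_y(z)\bigr)$, while the right-hand side produces $f_{f_x(y)}\bigl(f_x(z)\bigr)$; equating the resulting basis tensors forces
\[
f_x\bigl(f_y(z)\bigr)=f_{f_x(y)}\bigl(f_x(z)\bigr),
\]
which is exactly $x\vartriangleright(y\vartriangleright z)=(x\vartriangleright y)\vartriangleright(x\vartriangleright z)$. Thus the self-distributive identity is precisely the combinatorial shadow of the Yang--Baxter equation under this factorization of $c$.

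The step I expect to require the most care is bookkeeping the scalar coefficients in the Yang--Baxter comparison, and in particular confirming that the basis permutation induced by each side is well defined and consistent. Because $c$ sends a basis tensor to a scalar multiple of another basis tensor, one must check that the two sides of the Yang--Baxter equation land on the \emph{same} basis element before the scalar identities can even be read off; this is where the bijectivity of the $f_x$ is used a second time, to guarantee that the map $x\vartriangleright(-)$ permutes the chosen basis rather than merely sending it into the span. A clean way to handle this is to pass to the induced set-theoretic solution on the basis: the map $(x,y)\mapsto(f_x(y),x)$ is a set-theoretic solution of the Yang--Baxter equation on the basis, and it is standard that a set-theoretic solution of this ``permutation'' form yields a rack via $x\vartriangleright y=f_x(y)$. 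I would therefore either cite this standard correspondence or spell out the three-letter computation above, and conclude that $(V,\vartriangleright)$ — more precisely, the chosen basis with the operation $\vartriangleright$ — is a rack.
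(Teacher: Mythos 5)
Your proposal is correct and follows essentially the same route as the paper: the paper likewise applies the braid relation $c_1c_2c_1=c_2c_1c_2$ to a basis tensor $x\otimes y\otimes z$ and reads off $x\vartriangleright(y\vartriangleright z)=(x\vartriangleright y)\vartriangleright(x\vartriangleright z)$ from the first tensor factor, with bijectivity of the $f_x$ supplying the remaining rack axiom. (Your only slip is cosmetic: $(c\otimes\mathrm{id})(\mathrm{id}\otimes c)(c\otimes\mathrm{id})$ yields $f_{f_x(y)}\bigl(f_x(z)\bigr)$ and the other side yields $f_x\bigl(f_y(z)\bigr)$, i.e., you swapped the two sides, which does not affect the identity obtained.)
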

\begin{proof}
For any $x,y,z$ in a fixed basis of $V$,
\begin{align*}
c_1c_2c_1(x\otimes y\otimes z)
&\in \k [(x\vartriangleright y)\vartriangleright (x \vartriangleright z)]
\otimes (x\vartriangleright y)\otimes x, \\
c_2c_1c_2(x\otimes y\otimes z)
&\in\k [x\vartriangleright (y\vartriangleright z)]
\otimes (x\vartriangleright y)\otimes x. 
\end{align*}
So $x\vartriangleright (y\vartriangleright z)
=(x\vartriangleright y)\vartriangleright (x \vartriangleright z)$.
\end{proof}

\begin{lemma} \label{Nichols_I_rack}
If $n>2$, then $\dim  \mathfrak{B}\left(\mathscr{I}_{pjk}^{s}\right)=\infty$.
\end{lemma}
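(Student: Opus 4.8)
The plan is to realize $\mathscr{I}_{pjk}^{s}$ as a braided vector space of rack type and then to show that, for $n>2$, its underlying rack is of type $D$; the conclusion will then follow at once from the fact that a finite rack of type $D$ carries only infinite-dimensional Nichols algebras \cite{Andruskiewitsch2011}. First I would read off from the appendix the $2n$ permutations $f_{x}$ attached to the chosen basis of $\mathscr{I}_{pjk}^{s}$ and check, via Lemma \ref{racklemma}, that each $f_{x}$ is bijective, so that $(\mathscr{I}_{pjk}^{s},\vartriangleright)$ with $x\vartriangleright y=f_{x}(y)$ is a rack on $2n$ points. Comparing with the case $n=2$, where one already obtains the dihedral rack $\Bbb{D}_{4}$, I expect the rack here to be the dihedral rack $\Bbb{D}_{2n}=\Bbb{Z}/2n$ with $a\vartriangleright b=2a-b\ \bmod\ 2n$; I would confirm this by matching the explicit permutations against $b\mapsto 2a-b$ after a suitable relabelling of the basis.

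Next I would produce the type-$D$ data. Split $\Bbb{Z}/2n$ into the subrack $R$ of even residues and the subrack $S$ of odd residues; since $2n$ is even, $R$ and $S$ are each closed under $\vartriangleright$ and satisfy $R\vartriangleright S\subseteq S$ and $S\vartriangleright R\subseteq R$, so $\Bbb{D}_{2n}=R\sqcup S$ is a decomposable subrack, each part being isomorphic to $\Bbb{D}_{n}$. Taking $r=0\in R$ and $s=1\in S$ one computes $r\vartriangleright s=2n-1$, then $s\vartriangleright(r\vartriangleright s)=3$, and finally $r\vartriangleright\bigl(s\vartriangleright(r\vartriangleright s)\bigr)=2n-3$. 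For $n>2$ we have $2n-3\neq 1=s$, so the defining inequality for type $D$ holds; note that precisely when $n=2$ this quantity equals $1=s$, which is exactly why $\Bbb{D}_{4}$ fails to be of type $D$ and may support a finite-dimensional Nichols algebra. Having exhibited a finite rack of type $D$, the theorem of \cite{Andruskiewitsch2011} yields $\dim\mathfrak{B}(\mathscr{I}_{pjk}^{s})=\infty$ for every cocycle, in particular for the one arising from the braiding of $\mathscr{I}_{pjk}^{s}$.

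The rack axiom verification is routine, since Lemma \ref{racklemma} reduces it to the bijectivity of each $f_{x}$, and the type-$D$ witness above is elementary. The main obstacle is the bookkeeping needed to pin the rack down precisely: I must extract the $2n$ permutations from the module and comodule formulas in the appendix and verify that, after relabelling the basis, they coincide with $b\mapsto 2a-b$, so that the rack is genuinely $\Bbb{D}_{2n}$ rather than some other permutation rack on $2n$ points. Once the identification with $\Bbb{D}_{2n}$ is secured, the decomposition into even and odd residues and the appeal to \cite{Andruskiewitsch2011} complete the argument.
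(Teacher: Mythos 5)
Your proposal is correct and takes essentially the same route as the paper: the paper also invokes Lemma \ref{racklemma} to make the basis $X=\{w_r,m_r\mid r\in\overline{1,n}\}$ a rack, uses the decomposition into the two subracks $\{w_r\}$ and $\{m_r\}$ --- which is exactly your even/odd splitting under your labelling $w_1\leftrightarrow 0$, $m_1\leftrightarrow 1$ --- and certifies type $D$ with the same witness pair, computing $w_1\vartriangleright\bigl(m_1\vartriangleright(w_1\vartriangleright m_1)\bigr)=m_4$ (resp.\ $m_3$ when $n=3$) $\neq m_1$, before citing \cite[Theorem 3.6]{Andruskiewitsch2011}. The only difference is that the paper never names the abstract rack: it computes directly from the module action (each $f_{w_r}$, $f_{m_r}$ being, up to scalars, an alternating product of $x_{11}$ and $x_{22}$), so your explicit identification with the dihedral rack $\Bbb{D}_{2n}$ is correct but strictly stronger than what the type-$D$ argument requires.
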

\begin{proof}
In case $n>2$,
let $X=\{w_r, m_r\mid r\in\overline{1,n}\}$, then $(X,\vartriangleright)$
is a rack as defined in Lemma \ref{racklemma}. It's easy to see that 
$\{w_r\mid r\in\overline{1,n}\}$ and $\{m_r\mid r\in\overline{1,n}\}$
are two subracks of $X$.  $X$ is of type $D$, since 
\[
w_1\vartriangleright (m_1\vartriangleright (w_1 \vartriangleright m_1))
=\left\{\begin{array}{ll}
m_4,&\text{if}\,\, n>3,\\
m_3,&\text{if}\,\, n=3.
\end{array}\right.
\]
According to \cite[Theorem 3.6]{Andruskiewitsch2011}, $\dim  \mathfrak{B}\left(\mathscr{I}_{pjk}^{s}\right)=\infty$.
\end{proof}

\begin{lemma} \label{Nichols_I}
Let  $q=(-1)^p\omega^{4kn(2s+1)}$.
\begin{enumerate}
\item If $n=1$, then 
\[
\dim  \mathfrak{B}\left(\mathscr{I}_{pjk}^{s}\right)<\infty
\iff \left\{\begin{array}{l}
\lambda q^2=1\neq q, (\text{Cartan type}\,\, A_1\times A_1),\\
\lambda q^3=1\neq q, (\text{Cartan type}\,\, A_2).
\end{array}\right.
\]
\item If $n=2$, then 
\[\dim \mathfrak{B}\left(\mathscr{I}_{pjk}^{s}\right)
=\left\{\begin{array}{ll} 64, & q=-1, j=2\,\text{and}\, \lambda=1,  
(\text{dihedral rack type $\Bbb{D}_4$}), \\
64, & q=-1, j=4\,\text{and}\, \lambda=1,  
(\text{Cartan type $A_2\times A_2$}), \\
\infty, &\text{otherwise}. 
\end{array}\right.
\]
\end{enumerate}
\end{lemma}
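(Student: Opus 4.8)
The plan is to exploit the fact that $\mathscr{I}_{pjk}^{s}$ has dimension $2n$, so that the two remaining values $n=1$ and $n=2$ produce a two- and a four-dimensional braided vector space respectively, which I would treat with different tools; the case $n>2$ is already settled by Lemma \ref{Nichols_I_rack}. In both cases I would begin from the explicit module and comodule structure recorded in the appendix, compute the braiding $c$ on the natural basis $\{w_r,m_r\}$, and observe that the scalar $q=(-1)^p\omega^{4kn(2s+1)}$ governs the self-braidings while $\lambda$ enters through the relation $\chi_{21}^{n}=\lambda\chi_{12}^{n}$ that links the $w$- and $m$-blocks.

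For part (1), with $n=1$ the basis is $\{w_1,m_1\}$, and a direct computation shows that $(\mathscr{I}_{pjk}^{s},c)$ is of type $V_{abe}$ with $ae=b^{2}$, hence of diagonal type. I would record the $2\times2$ Dynkin datum $(q_{\alpha\beta})$ as explicit monomials in $q$ and $\lambda$, with $q_{11}=q_{22}$ and edge label $q_{12}q_{21}$, and then invoke Heckenberger's classification \cite{heckenberger2009classification}. For a rank-two diagonal braiding of this shape the only finite-dimensional Nichols algebras are the disconnected Cartan type $A_1\times A_1$, which forces the diagonal entry to be $-1$ and the edge label to be $1$, and the connected Cartan type $A_2$, which forces the edge label to be the inverse of the diagonal entry. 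Rewriting these two diagrammatic equalities in terms of $q$ and $\lambda$ and discarding the degenerate value $q_{11}=1$ should produce precisely $\lambda q^{2}=1\neq q$ and $\lambda q^{3}=1\neq q$, which is the claimed equivalence.

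For part (2), with $n=2$ the braiding is of rack type, so Lemma \ref{racklemma} attaches to it a rack $(X,\vartriangleright)$ on the four-element basis $X=\{w_1,w_2,m_1,m_2\}$ together with a two-cocycle determined by $q$. I would first write $\vartriangleright$ and the cocycle out explicitly and track their dependence on $j\in\{2,4\}$ and on $\lambda$. When $j=2$ and $\lambda=1$ I expect $(X,\vartriangleright)$ to be isomorphic to the dihedral rack $\Bbb{D}_4$; for the cocycle coming from $q=-1$ the associated Nichols algebra is the known $64$-dimensional one \cite{Andruskiewitsch2003MR1994219}, while every other cocycle yields an infinite-dimensional algebra. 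When $j=4$ and $\lambda=1$ the braiding should instead admit a full diagonalization, and for $q=-1$ the resulting rank-four diagonal datum has all four self-braidings equal to $-1$ with two disjoint edges of label $-1$, i.e.\ Cartan type $A_2\times A_2$; since an $A_2$ block with diagonal entry $-1$ has dimension $2^{3}$, this gives $\dim\mathfrak{B}\bigl(\mathscr{I}_{pjk}^{s}\bigr)=2^{6}=64$. In all other configurations, in particular whenever $\lambda=-1$ or $q\neq-1$, I would show that the Nichols algebra is infinite dimensional.

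The hard part will be this last elimination step for $n=2$, because the type-$D$ argument of Lemma \ref{Nichols_I_rack}, which rests on \cite[Theorem 3.6]{Andruskiewitsch2011} and requires $n>2$, is no longer available; here the rack has only four elements and contains no subrack forcing type $D$. Instead the $j=2$ branch must be closed using the classification of finite-dimensional Nichols algebras over the dihedral rack $\Bbb{D}_4$ to rule out every cocycle other than $q=-1$, while the $j=4$ branch is closed by diagonalizing and appealing once more to Heckenberger's list \cite{heckenberger2009classification}. The delicate point throughout is to verify that the change of basis diagonalizing the rack-type braiding exists exactly when $\lambda=1$, and that, together with $q=-1$, this is the only datum landing inside the finite part of the list; confirming that $\lambda=-1$ always produces a generalized Dynkin diagram off the list is the computation I expect to demand the most care.
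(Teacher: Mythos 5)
Your overall route is the same as the paper's: for $n=1$ the braiding is of diagonal type with both vertices equal to $q$ and edge label $\lambda q^{2}$, settled by Heckenberger's list \cite{heckenberger2009classification}; for $n=2$ the paper likewise identifies $\mathfrak{B}\left(\mathscr{I}_{pjk}^{s}\right)\simeq \mathfrak{B}(\Bbb{D}_4,c_q)$ and invokes the rank-two classification of \cite{Heckenberger2017}, then exhibits the $j=4$ case as Cartan type $A_2\times A_2$ via the substitution $u_1=w_1+\alpha w_2$, $u_2=m_1-\alpha m_2$, $u_3=w_1-\alpha w_2$, $u_4=m_1+\alpha m_2$. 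However, two of your intermediate claims are wrong as stated and would derail the argument if followed literally. First, in part (1), finiteness of the disconnected type $A_1\times A_1$ does \emph{not} force the vertex label to be $-1$: it only requires $q$ to be a root of unity different from $1$, which is automatic here since $q=(-1)^p\omega^{4kn(2s+1)}$ with $\omega$ a root of unity. The correct condition is exactly the vanishing of the edge, $\lambda q^{2}=1\neq q$; for $\lambda=-1$ this admits $q=\pm\sqrt{-1}$, giving a $16$-dimensional Nichols algebra, a case your ``diagonal entry $=-1$'' restriction would wrongly discard and which is needed for the stated equivalence. (Incidentally, for $n=1$ the braiding is already diagonal on the basis $\{w_1,m_1\}$, not merely of type $V_{abe}$ with $ae=b^{2}$.)

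Second, in part (2), diagonalizability of the braiding is governed by $j$, not by $\lambda$: the relevant scalar is $\beta=\omega^{2njN}$, which equals $+1$ for $j=4$ and $-1$ for $j=2$, and the change of basis above diagonalizes the braiding precisely when $\beta=1$. For $j=2$ the braiding remains genuinely of non-abelian $\Bbb{D}_4$ type for \emph{both} values of $\lambda$; the parameter $\lambda$ enters only through the cocycle, and it is membership of $(\Bbb{D}_4,c_q)$ in the finite list of \cite{Heckenberger2017} that forces $\lambda=1$ and $q=-1$. So your closing assertion that ``the change of basis diagonalizing the rack-type braiding exists exactly when $\lambda=1$'' must be replaced accordingly. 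Finally, the elimination step you flag as hard is shortened in the paper: the subspaces $\k w_1\oplus\k w_2$ and $\k m_1\oplus\k m_2$ are braided subspaces of diagonal type with both vertices $q$ and edge $q^{2}$, which already forces $q=-1$ or $q^{3}=1\neq q$ before any rack-theoretic input; adopting that observation lets you avoid a cocycle-by-cocycle analysis over $\Bbb{D}_4$ and reduces the remaining work to checking which of the surviving data occur in the list of \cite{Heckenberger2017}.
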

\begin{proof}
If $n=1$, then the braiding of $\mathscr{I}_{pjk}^{s}$ is given by 
\begin{align*}
c(w_1\otimes w_1)
&
=q w_1\otimes w_1, &
c(w_1\otimes m_1)
&
=\lambda q\omega^{2njN(2s+1)} m_1\otimes w_1,\\
c(m_1\otimes w_1)
&
=q\omega^{2njN}w_1\otimes m_1 ,&
c(m_1\otimes m_1)
&
=q m_1\otimes m_1.
\end{align*}
So $\mathfrak{B}\left(\mathscr{I}_{pjk}^{s}\right)$ is of diagonal type and its Dynkin diagram is 
   $\xymatrix{ \overset{q}{\underset{\ }{\circ}}\ar  @{-}[rr]^{\lambda q^2}  &&
\overset{q}{\underset{\ }{\circ}}}$ if  $\lambda q^2\neq 1$. 

If $n=2$, then the braiding of $\mathscr{I}_{pjk}^{s}$ is given by 
\begin{align*}
c(w_1\otimes w_1)&=q w_1\otimes w_1,  &
c(w_1\otimes w_2)&=q\beta w_2\otimes w_1,\\
c(w_2\otimes w_1)&=q\beta w_1\otimes w_2,  &
c(w_2\otimes w_2)&=q w_2\otimes w_2,\\
c(w_1\otimes m_1)&=\alpha m_2\otimes w_1,  &
c(w_1\otimes m_2)&=q^2\alpha^{-1} m_1\otimes w_1,\\
c(w_2\otimes m_1)&=\lambda \alpha\beta m_2\otimes w_2,  &
c(w_2\otimes m_2)&=\lambda q^2 \alpha^{-1}\beta m_1\otimes w_2,\\
c(m_1\otimes m_1)&=q m_1\otimes m_1,  &
c(m_1\otimes m_2)&=q\lambda \beta m_2\otimes m_1,\\
c(m_2\otimes m_1)&=q\lambda \beta m_1\otimes m_2,  &
c(m_2\otimes m_2)&=q m_2\otimes m_2,\\
c(m_1\otimes w_1)&=\alpha  w_2\otimes m_1,&
c(m_1\otimes w_2)&=q^2\alpha^{-1}w_1\otimes m_1,\\
c(m_2\otimes w_1)&=\alpha\beta w_2\otimes m_2,&
c(m_2\otimes w_2)&=q^2\alpha^{-1}\beta w_1\otimes m_2, 
\end{align*}
where $\alpha=\omega^{8kns}$, $\beta=\omega^{2njN}=\pm 1$. 
Both $\k w_1\oplus \k w_2$ and $\k m_1\oplus \k m_2$ are braided subspaces of diagonal type. 
If $q^2\neq 1$, then their Dynkin diagrams are given by 
$\xymatrix{ \overset{q}{\underset{\ }{\circ}}\ar  @{-}[rr]^{ q^2}  &&
\overset{q}{\underset{\ }{\circ}}}$. So $\dim \mathfrak{B}\left(\mathscr{I}_{pjk}^{s}\right)<\infty$
iff          $q=-1$ or $q^3=1\neq q$.

It's easy to see $\mathfrak{B}\left(\mathscr{I}_{pjk}^{s}\right)
\simeq \mathfrak{B}(\Bbb{D}_4,c_q)$, where  $\Bbb{D}_4$ is the dihedral rack and 
$c_q$ is some $2$-cocycle over $\Bbb{D}_4$. According to \cite{Heckenberger2017}, 
if  $\dim \mathfrak{B}\left(\mathscr{I}_{pjk}^{s}\right)<\infty$, then 
$\dim \mathfrak{B}\left(\mathscr{I}_{pjk}^{s}\right)=64$. This only could be happened in case 
$\lambda=1$ and $q=-1$. The relations of the $64$-dimensional Nichols algebras are given by 
\begin{align*}
w_1w_2+\beta w_2w_1=0,\quad   w_1^2=w_2^2=0,\quad
m_1m_2+\beta m_2m_1=0,\quad   m_1^2=m_2^2=0,\\
w_1m_1-\alpha  m_2w_1+\alpha ^2\beta  w_2m_2-\alpha m_1w_2=0,\\
w_1m_2-\alpha ^{-1} m_1w_1+w_2m_1-\alpha \beta m_2w_2=0,\\
w_1m_1w_1m_1+\beta m_1w_1m_1w_1=0, \\
w_1m_1w_2m_1+w_2m_1w_1m_1+m_1w_1m_1w_2+m_1w_2m_1w_1=0.
\end{align*}
In particular, if $j=4$, then the Nichols algebra is of Cartan type $A_2\times A_2$, which 
was  appeared  first in \cite[Example 6.5]{Milinski2000}. Denote 
$u_1=w_1+\alpha w_2$, $u_2=m_1-\alpha m_2$, $u_3=w_1-\alpha w_2$, $u_4=m_1+\alpha m_2$,
we can see this from the  relations: $u_i^2=0$ for $i\in\{1,2,3,4\}$ and 
\begin{align*}
(u_1u_2)^2+(u_2u_1)^2=0,
\quad u_1u_3+u_3u_1=0, \quad u_1u_4-u_4u_1=0,\\
(u_3u_4)^2+(u_4u_3)^2=0,\quad  
u_3u_2+u_2u_3=0, \quad u_2u_4+u_4u_2=0.
\end{align*}
\end{proof}

\subsection{The Nichols algebras over $\mathscr{K}_{jk,p}^{s}$}\label{section_Nichos_K}
Let 
\begin{align*}
b+2a-2=2nr+d,\quad r\in\Bbb{N},\quad 0\leq d\leq 2n-1,\\
2n+1-b+2a-2=2ne+f, \quad e\in\Bbb{N},\quad 0\leq f\leq 2n-1,
\end{align*}
then the braiding of $\mathscr{K}_{jk,p}^{s}$ is given by 
\begin{align*}
&\quad c(w_a\otimes w_b)=\\
&\left\{\begin{array}{ll}
(-1)^p\left(\bar{\mu}\omega^{8nk}\right)^s w_b\otimes w_1, & a=1,\\
(-1)^p\lambda^r\bar{\mu}^{s+n(r-2)}
\omega^{2n(r-2)(4nk+jN)+8nks} w_{2n}\otimes w_{2n-a+2}, &a>1, d=0, 2\mid (a+b),\\
(-1)^p\lambda^{r+1}\bar{\mu}^{s+n(r-1)}
\omega^{2n(r-1)(4nk+jN)+8nks} w_{d}\otimes w_{2n-a+2}, &a>1, d>0, 2\mid (a+b),\\
(-1)^p\lambda^e\left(\bar{\mu}\omega^{8nk}\right)^{s-ne-2+2a}
\omega^{-2jNne}w_{1}\otimes w_{2n-a+2}, &a>1, f=0, 2\nmid (a+b),\\
\frac{(-1)^p\lambda^{e+1}\left(\bar{\mu}\omega^{8nk}\right)^{s-n(e+1)-2+2a}}
{\omega^{2jNn(e+1)}}w_{2n+1-f}\otimes w_{2n-a+2}, &a>1, f>0, 2\nmid (a+b).
\end{array}\right. 
\end{align*}

\subsubsection{The Nichols algebras over $\mathscr{K}_{jk,p}^{s}$ for $n=1$, $2$}
\begin{lemma}\label{Nichols_K}
Let $q=(-1)^p\bar{\mu}^s\omega^{8kns}$.
\begin{enumerate}
\item If $n=1$, then 
\[
\dim \mathfrak{B}\left(\mathscr{K}_{jk,p}^{s}\right)<\infty\iff
\left\{\begin{array}{l}
\lambda q^2=1\neq q,\quad (\text{Cartan type}\,\, A_1\times A_1),\\
\lambda q^3=1\neq q, \quad (\text{Cartan type}\,\, A_2).
\end{array}\right.
\]
\item If $n=2$, then $\dim \mathfrak{B}\left(\mathscr{K}_{jk,p}^{s}\right)<\infty\Longrightarrow q=-1$ or $q^3=1\neq q$. In particular, if
$\lambda=1$, then 
$\dim \mathfrak{B}\left(\mathscr{K}_{jk,p}^{s}\right)=
\left\{\begin{array}{ll}
64, &q=-1,\\
\infty, &\text{otherwise}.
\end{array}\right.$
\end{enumerate}
\end{lemma}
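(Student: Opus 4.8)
The plan is to mirror, for $\mathscr{K}_{jk,p}^{s}$, the two-step strategy already used for $\mathscr{I}_{pjk}^{s}$ in Lemma \ref{Nichols_I}: first specialize the displayed general braiding formula to $n=1$ and $n=2$, and then read off finiteness from Heckenberger's classification of diagonal type \cite{heckenberger2009classification} together with the Heckenberger--Vendramin classification of the relevant dihedral case \cite{Heckenberger2017}.

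For $n=1$ the module $\mathscr{K}_{jk,p}^{s}$ is two-dimensional with basis $\{w_1,w_2\}$, and I would substitute $n=1$ into the four cases of the braiding. A direct evaluation gives $c(w_1\otimes w_1)=q\,w_1\otimes w_1$, $c(w_2\otimes w_2)=q\,w_2\otimes w_2$, $c(w_1\otimes w_2)=q\,w_2\otimes w_1$ and $c(w_2\otimes w_1)=q\,\omega^{-4jN}w_1\otimes w_2$, so the braiding is of diagonal type. The only input beyond bookkeeping is the identity $\omega^{-4jN}=\lambda$: since $\omega$ is a primitive $8nN=8N$-th root of unity one has $\omega^{4N}=-1$, whence $\omega^{-4jN}=(-1)^{j}$, and by the parameter constraints on $\mathscr{K}$ the index $j$ is even exactly when $\lambda=1$ and odd exactly when $\lambda=-1$. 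Thus the Dynkin diagram is $\xymatrix{ \overset{q}{\underset{\ }{\circ}}\ar@{-}[rr]^{\lambda q^2} && \overset{q}{\underset{\ }{\circ}}}$, identical in shape to the $n=1$ case of Lemma \ref{Nichols_I}, and \cite{heckenberger2009classification} yields finiteness precisely when the diagram is disconnected of Cartan type $A_1\times A_1$ (i.e.\ $\lambda q^2=1\neq q$) or connected of Cartan type $A_2$ (i.e.\ $q_{12}q_{21}=q^{-1}$, that is $\lambda q^3=1\neq q$).

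For $n=2$ the module is four-dimensional with basis $\{w_1,w_2,w_3,w_4\}$, and the plan is to extract tractable braided subspaces. Specializing the formula, the odd-indexed vectors span a diagonal braided subspace $\k w_1\oplus\k w_3$ with all four coefficients equal to $q$, hence Dynkin diagram $\xymatrix{ \overset{q}{\underset{\ }{\circ}}\ar@{-}[rr]^{q^2} && \overset{q}{\underset{\ }{\circ}}}$; finiteness of $\mathfrak{B}(\mathscr{K}_{jk,p}^{s})$ therefore forces $q=-1$ or $q^3=1\neq q$, which is the stated necessary condition. The even-indexed vectors span a subspace $\k w_2\oplus\k w_4$ which I would recognize as being of type $V_{abe}$ with $b=q$ and $ae=q^{2}=b^{2}$, the product $ae$ collapsing because the two $\lambda$-powers and the two $\bar{\mu}$-powers combine; so this subspace is again of diagonal type and yields the same constraint.

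Finally, for the sharp statement when $\lambda=1$, the plan is to resolve the swap that the braiding induces on the even-indexed factor: after diagonalizing the $V_{abe}$ piece (possible since $ae=b^{2}$), the full braiding should be put into rack form and realize a $2$-cocycle over the dihedral rack $\Bbb{D}_4$, exactly as for $\mathscr{I}_{pjk}^{s}$ with $n=2$. Applying \cite{Heckenberger2017} then shows that a finite-dimensional such Nichols algebra must have dimension $64$, which occurs precisely at $q=-1$, while the branch $q^3=1\neq q$ produces an infinite-dimensional algebra. I expect the main obstacle to be precisely this last reduction: the diagonal subspace alone only narrows the answer to two candidates, and pinning down that the surviving case is the $64$-dimensional $\Bbb{D}_4$-algebra (and not $27$ or $\infty$) requires correctly exhibiting the $\Bbb{D}_4$-cocycle after the basis change; this is also why the analogous $\lambda=-1$ case, which does not reduce to a classified rack, is left open.
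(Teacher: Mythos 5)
Your treatment of part (1) and of the necessary condition in part (2) coincides with the paper's proof: for $n=1$ the braiding is diagonal with Dynkin diagram $\xymatrix{ \overset{q}{\underset{\ }{\circ}}\ar@{-}[rr]^{\lambda q^2} && \overset{q}{\underset{\ }{\circ}}}$ (your bookkeeping $\omega^{-4jN}=(-1)^j=\lambda$, using the parity constraint on $j$ from Theorem \ref{MainYDM}, is exactly what the paper uses implicitly), and for $n=2$ the diagonal braided subspace $\k w_1\oplus\k w_3$ forces $q=-1$ or $q^3=1\neq q$ by \cite{heckenberger2009classification}; your identification of $\k w_2\oplus\k w_4$ as type $V_{abe}$ with $ae=b^2=q^2$ also matches.

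The genuine gap is your final step for $\lambda=1$, and the flaw is precisely the $\Bbb{D}_4$ identification you flag as the main obstacle. The analogy with $\mathscr{I}_{pjk}^{s}$ fails structurally: in $\mathscr{I}$ ($n=2$) the two components swap each other in \emph{both} directions, which is exactly the dihedral rack $\Bbb{D}_4$ (within a component the action is trivial, across components it is the transposition), and it is only the sign $\beta=-1$ that obstructs simultaneous diagonalization there. In $\mathscr{K}$ ($n=2$) the cross-action is one-sided: every braiding $c(w_1\otimes -)$ and $c(w_3\otimes -)$ has trivial first-leg permutation, while only $w_2,w_4$ induce the swap $w_1\leftrightarrow w_3$. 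So the natural rack has one orbit acting trivially — it is not $\Bbb{D}_4$, where every element acts by a nontrivial involution on the other orbit — and appealing to \cite{Heckenberger2017} without exhibiting an explicit $\Bbb{D}_4$-cocycle is unsupported. Because the action is one-sided, there is in fact no compatibility obstruction at all when $\lambda=1$ (then $b^2=1$ for both $j=2$ and $j=4$): the paper diagonalizes the \emph{whole} four-dimensional space via $u_1=w_2+\sqrt{1/(ab)}\,w_4$, $u_2=w_2-\sqrt{1/(ab)}\,w_4$, $u_3=w_1+\sqrt{1/(ab)}\,w_3$, $u_4=w_1-\sqrt{1/(ab)}\,w_3$, obtaining a rank-four braiding of diagonal type with generalized Dynkin diagram as in Figure \ref{figureEightM23} when $q^2\neq\pm 1$. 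The conclusion then comes from Heckenberger's diagonal classification alone: for $q\neq -1$ (in particular on your surviving branch $q^3=1\neq q$, where $q^2\neq\pm1$ so the figure applies) the diagram is not in the list, so the dimension is infinite; and at $q=-1$ direct inspection shows Cartan type $A_2\times A_2$, whence $\dim\mathfrak{B}\left(\mathscr{K}_{jk,p}^{s}\right)=8\cdot 8=64$. Your proposal, as written, leaves exactly this decisive dichotomy unproved.
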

\begin{proof}
If $n=1$, then the braiding of $\mathfrak{B}\left(\mathscr{K}_{jk,p}^{s}\right)$ is given by 
\begin{align*}
c( w_1 \otimes w_1 )&
                                    =q w_1 \otimes w_1,&
c( w_1 \otimes w_2 )&
                                    =q w_2 \otimes w_1,\\
c( w_2 \otimes w_1 )&
                                   =q\omega^{-4jN}w_1 \otimes w_2,&
c( w_2 \otimes w_2 )&
                                    =q w_2 \otimes w_2.
\end{align*}
$\mathfrak{B}\left(\mathscr{K}_{jk,p}^{s}\right)$ is of diagonal type and 
its Dynkin diagram is    $\xymatrix{ \overset{q}{\underset{\ }{\circ}}\ar  @{-}[rr]^{ \lambda q^2}  &&
\overset{q}{\underset{\ }{\circ}}}$ in case  $\lambda q^2\neq 1$. 

Denote $a=\bar{\mu}^2\omega^{16kn}$, 
$b=\omega^{4jN}$. 
If $n=2$,  then the braiding of $\mathfrak{B}\left(\mathscr{K}_{jk,p}^{s}\right)$ is given by 
\begin{align*}
c( w_1 \otimes w_1 )&= q w_1 \otimes w_1, &
c( w_1 \otimes w_2 )&= q w_2 \otimes w_1, \\
c( w_1 \otimes w_3 )&= q w_3 \otimes w_1, &
c( w_1 \otimes w_4 )&= q w_4 \otimes w_1, \\
c( w_2 \otimes w_1 )&= qa^{-1}b^{2} w_3 \otimes w_4, &
c( w_2 \otimes w_2 )&= q\lambda a^{-1}b^{-1} w_4 \otimes w_4, \\
c( w_2 \otimes w_3 )&=q\lambda b^{-1} w_1 \otimes w_4, &
c( w_2 \otimes w_4 )&=  q w_2 \otimes w_4, \\
c( w_3 \otimes w_1 )&=  q w_1 \otimes w_3, &
c( w_3 \otimes w_2 )&= qb^2 w_2 \otimes w_3, \\
c( w_3 \otimes w_3 )&=  q w_3 \otimes w_3, &
c( w_3 \otimes w_4 )&= qb^2 w_4 \otimes w_3, \\
c( w_4 \otimes w_1 )&=  q\lambda b w_3 \otimes w_2, &
c( w_4 \otimes w_2 )&=  q w_4 \otimes w_2, \\
c( w_4 \otimes w_3 )&=  qab^2 w_1 \otimes w_2, &
c( w_4 \otimes w_4 )&=  q\lambda ab w_2 \otimes w_2.
\end{align*}
$W_1=\k w_1\oplus \k w_3$ and $W_2=\k w_2\oplus \k w_4$  are braided vector spaces. $\mathfrak{B}(W_1)$
is of diagonal type and its Dynkin diagram is 
   $\xymatrix{ \overset{q}{\underset{\ }{\circ}}\ar  @{-}[rr]^{ q^2}  &&
\overset{q}{\underset{\ }{\circ}}}$ if $q^2\neq 1$. 
$\mathfrak{B}(W_1)$ (or $\mathfrak{B}(W_2)$) 
  is finite dimensional iff $q=-1$ or $q^3=1\neq q$.

In case $\lambda=1$, then $b^2=1$. 
Denote $u_i=w_2+(-1)^{i+1}\sqrt{\frac{1}{ab}}w_4$ for $i=1, 2$, and 
$u_j=w_1+(-1)^{j+1}\sqrt{\frac{1}{ab}}w_3$ for $j=3, 4$. 
Then the Nichols algebra is diagonal type and its  Dynkin diagram is given by 
the Figure \ref{figureEightM23} if $q^2\neq \pm 1$. 
According to Heckenberger's classification result \cite{heckenberger2009classification}, 
we have  $\dim \mathfrak{B}\left(\mathscr{K}_{jk,p}^{s}\right)=\infty$ in case $\lambda=1$ and 
$q\neq -1$. 
\begin{figure}[h!]
$$
\xy 
(0,0)*\cir<4pt>{}="E1", 
(30,0)*\cir<4pt>{}="E2",
(30,30)*\cir<4pt>{}="E3",
(0,30)*\cir<4pt>{}="E4",
(-4,0)*+{q},
(-4,15)*+{q^2b},
(-4,30)*+{q},
(34,15)*+{q^2b},
(34,0)*+{q},
(34,30)*+{q},
(14,33)*+{q^2},
(15,-3)*+{q^2},
(5,18)*+{-bq^2},
(24,18)*+{-bq^2},
\ar @{-}"E1";"E2"
\ar @{-}"E2";"E3"
\ar @{-}"E4";"E1"
\ar @{-}"E4";"E3"
\ar @{-}"E1";"E3"
\ar @{-}"E2";"E4"
\endxy
$$
\caption{}
\label{figureEightM23}
\end{figure}
In case $q=-1$ and $\lambda=1$, it's easy to see  that 
$\mathfrak{B}\left(\mathscr{K}_{jk,p}^{s}\right)$
is of Cartan type $A_2\times A_2$ and $\dim \mathfrak{B}\left(\mathscr{K}_{jk,p}^{s}\right)=64$.
\end{proof}

\begin{remark}
The relations of the $64$-dimensional Nichols 
algebra of Cartan type $A_2\times A_2$ is given by 
\begin{align*}
w_1^2=0,\quad w_3^2=0,\quad w_1w_3+w_3w_1=0,\\
w_2w_4=0,\quad w_4w_2=0,\quad w_2^2+\frac{ b}{a}w_4^2=0,\\
w_1w_2+w_2w_1+a^{-1} w_3w_4+a^{-1}w_4w_3=0, \\
w_1w_4+w_4w_1+ b w_3w_2+ b w_2w_3=0, \\
w_1w_2w_3w_4=a w_2w_1w_2w_1,\\
w_2w_3w_2w_1=b w_1w_2w_1w_4+\frac1a w_3w_2w_3w_4-w_2w_1w_2w_3,
\end{align*}
where $a=\bar{\mu}^2\omega^{16kn}$, 
$b=\omega^{4jN}=\pm 1$.  
\end{remark}

\subsubsection{The Nichols algebras over $\mathscr{K}_{jk,p}^{s}$ for $n=3$}
\label{NicholsKCasen3}
Denote $q=(-1)^p\bar{\mu}^s\omega^{8kns}$, 
$\alpha=\lambda\bar{\mu}\omega^{8kn}$, $\beta=\omega^{6jN}$, then the braiding of 
$\mathscr{K}_{jk,p}^{s}$ is given by 
\begin{align*}
c( w_1 \otimes w_1 )&=q w_1 \otimes w_1, &
c( w_1 \otimes w_2 )&=q w_2 \otimes w_1, \\
c( w_1 \otimes w_3 )&=q w_3 \otimes w_1, &
c( w_1 \otimes w_4 )&=q w_4 \otimes w_1,\\
c( w_1 \otimes w_5 )& =q w_5 \otimes w_1,&
c( w_1 \otimes w_6 )& =q w_6 \otimes w_1,\\
c( w_2 \otimes w_1 )& =\frac{q}{\beta^2  \alpha^4}w_5 \otimes w_6,&
c( w_2 \otimes w_2 )& =\frac{q}{\beta  \alpha^3}w_4 \otimes w_6,\\
c( w_2 \otimes w_3 )& =\frac{q}{\beta \alpha}w_1 \otimes w_6,&
c( w_2 \otimes w_4 )&=\frac{q}{\beta \alpha^3} w_6 \otimes w_6,\\
c( w_2 \otimes w_5 )&=\frac{q}{\beta \alpha} w_3 \otimes w_6,&
c( w_2 \otimes w_6 )&=q w_2 \otimes w_6, \\
c( w_3 \otimes w_1 )&=\frac{q}{\beta \alpha^3} w_5 \otimes w_5,&
c( w_3 \otimes w_2 )&=\frac{q}{\beta^2  \alpha^2} w_4 \otimes w_5,\\
c( w_3 \otimes w_3 )&=q w_1 \otimes w_5, &
c( w_3 \otimes w_4 )&=\frac{q}{\beta^2  \alpha^2} w_6 \otimes w_5, \\
c( w_3 \otimes w_5 )&=q w_3 \otimes w_5, &
c( w_3 \otimes w_6 )&=\frac{q\alpha}{\beta } w_2 \otimes w_5, \\
c( w_4 \otimes w_1 )&=\frac{q}{\beta^2  } w_1 \otimes w_4, &
c( w_4 \otimes w_2 )&=q w_2 \otimes w_4, \\
c( w_4 \otimes w_3 )&=\frac{q}{\beta^2  }w_3 \otimes w_4, &
c( w_4 \otimes w_4 )&=q w_4 \otimes w_4,  \\
c( w_4 \otimes w_5 )&=\frac{q}{\beta^2  }w_5 \otimes w_4, &
c( w_4 \otimes w_6 )&=q w_6 \otimes w_4, \\
c( w_5 \otimes w_1 )&=q w_3 \otimes w_3,  &
c( w_5 \otimes w_2 )&=\frac{q}{\beta^3  \alpha} w_6 \otimes w_3, \\
c( w_5 \otimes w_3 )&=q w_5 \otimes w_3,  &
c( w_5 \otimes w_4 )&=\frac{q\alpha^2}{\beta^2  } w_2 \otimes w_3, \\
c( w_5 \otimes w_5 )&=\beta q\alpha^3 w_1 \otimes w_3, &
c( w_5 \otimes w_6 )&=\frac{q\alpha^2}{\beta^2  }w_4 \otimes w_3, \\
c( w_6 \otimes w_1 )&=\frac{q\alpha}{\beta^3  }w_3 \otimes w_2,  &
c( w_6 \otimes w_2 )&=q  w_6 \otimes w_2, \\
c( w_6 \otimes w_3 )&=\frac{q\alpha}{\beta^3  } w_5 \otimes w_2, &
c( w_6 \otimes w_4 )&=\beta  q\alpha^3 w_2 \otimes w_2, \\
c( w_6 \otimes w_5 )&=\frac{q\alpha^4}{\beta^2  } w_1 \otimes w_2, &
c( w_6 \otimes w_6 )&=\beta  q\alpha^3 w_4 \otimes w_2.
\end{align*}
If $q=-1$,  $\beta^2=1$, then the Nichols algebra has the folllowing relations:
\begin{align*}
w_1w_2+w_2w_1+\frac1{\alpha^4}w_5w_6
+\frac1{\alpha^4}w_6w_5+\frac1{\alpha^2}w_3w_4+\frac1{\alpha^2}w_4w_3=0,\\
w_1w_6+w_6w_1+\alpha\beta  w_2w_3+\alpha\beta w_3w_2
+\frac {\beta}{\alpha}  w_4w_5+\frac {\beta}{\alpha}  w_5w_4=0,\\
w_6w_2=w_5w_3=w_4^2=w_3w_5=w_2w_6=w_1^2=0,\\
 w_1w_4+w_4w_1=0,\quad
 w_2w_5+\frac{1}{\alpha\beta}  w_3w_6=0,\quad
 w_5w_2+\frac{1}{\alpha\beta}  w_6w_3=0,\\
w_1w_5+w_5w_1+w_3^2=0,\quad
w_1w_3+w_3w_1+\frac{1}{\alpha^3\beta}  w_5w_5=0,\\
w_2^2+\frac{1}{\alpha^3\beta}  w_4w_6+\frac{1}{\alpha^3\beta}  w_6w_4=0,\quad
w_2w_4+w_4w_2+\frac{1}{\alpha^3\beta}  w_6^2=0.
\end{align*}
\section{Appendix}
Here is a list of Yetter-Drinfeld modules over $A_{N\,2n}^{\mu\lambda}$ with structures decided by 
the formulae \ref{eq:action} and \ref{eq:coaction}. 
\subsection{One and two dimensional Yetter-Drinfeld modules over $A_{N\,2n}^{\mu\lambda}$}
\begin{enumerate}
\item $\mathscr{A}_{iik,p}^s=\k w$, where $w=v\boxtimes\left[x_{11}^{2s}+(-1)^px_{12}^{2s}\right]$, 
         $s\in\overline{1,N}$, $p\in\Bbb{Z}_2$, $V_{iik}=\k v$.
\item $\bar{\mathscr{A}}_{ijk,p}^s=\k w$, where
      $
      w=v\boxtimes\left[x_{11}^{2s+1}\chi_{22}^{2n-1}
      +(-1)^p\sqrt{\lambda} x_{12}^{2s+1}\chi_{21}^{2n-1}\right],
       $ 
      $s\in\overline{1,N}$ , $p\in\Bbb{Z}_2$, $V_{ijk}=\k v$, 
      $\left\{\begin{array}{ll}
       i=j,&\lambda=1,\\
       i= j+1, &\lambda=-1,
       \end{array}\right.$ and $i,j\in\Bbb{Z}_2$.
\item $\mathscr{B}_{ijk}^s=\k w_1\oplus \k w_2$, where 
       $s\in\overline{1,N}$, $i,j\in\Bbb{Z}_2$, $i= j+1$, $V_{ijk}=\k v$, 
       \[
       w_1=v\boxtimes \left[x_{11}^{2s}+x_{12}^{2s}\right], \quad
       w_2=v\boxtimes \left[x_{11}^{2s}-x_{12}^{2s}\right].
       \] 

\item $\mathscr{C}_{ijk,p}^{st}=\k w_1\oplus \k w_2$, where 
         $s\in\overline{1,N}$, $t\in\overline{0,n-1}$,  $V_{ijk}=\k v$, $i,j,p\in\Bbb{Z}_2$,  
         \begin{align*}
        w_1&=v\boxtimes\left[x_{11}^{2s+1}\chi_{22}^{2t+1}
        +(-1)^p\sqrt{(-1)^{i+j}}x_{12}^{2s+1}\chi_{21}^{2t+1}\right],\\
        w_2&=v\boxtimes\left[x_{11}^{2s}\chi_{22}^{2t+2}
        +\frac{(-1)^p}{\sqrt{(-1)^{i+j}}}x_{12}^{2s}\chi_{21}^{2t+2}\right]. 
         \end{align*} 
  
\item $\mathscr{D}_{jk,p}^{st}=\k w_1\oplus \k w_2$, where 
        $V_{jk}=\k v_1\oplus\k v_2$, $s\in\overline{1,N} $, 
        $t\in\overline{0,n-1}$, $p\in\Bbb{Z}_2$,   
         \begin{align*}
         w_1&=v_1\boxtimes x_{11}^{2s+1}\chi_{22}^{2t+1}
                     +(-1)^p\omega^{jN-4kn}v_2\boxtimes x_{12}^{2s+1}\chi_{21}^{2t+1},\\
        w_2&=v_2\boxtimes x_{11}^{2s}\chi_{22}^{2t+2}
                    +(-1)^p\omega^{4kn-jN}v_1\boxtimes x_{12}^{2s}\chi_{21}^{2t+2}. 
        \end{align*}
     
\item  $\mathscr{E}_{jk,p}^{st}=\k w_1\oplus \k w_2$, where 
         $V_{jk}=\k v_1\oplus\k v_2$, $s\in\overline{1,N}$, 
         $t\in\overline{0,n-1}$, $p\in\Bbb{Z}_2$, 
         \begin{align*}
         w_1&=v_1\boxtimes x_{11}^{2s}\chi_{22}^{2t}
                    +(-1)^p\omega^{jN-4kn}v_2\boxtimes x_{12}^{2s}\chi_{21}^{2t},\\
         w_2&=v_2\boxtimes x_{11}^{2s}\chi_{11}^{2t}
                      +(-1)^p\omega^{4kn-jN}v_1\boxtimes x_{12}^{2s}\chi_{12}^{2t}.
         \end{align*} 
 
\item $\mathscr{F}_{jk,p}^s=\k w_1\oplus\k w_2$, where 
       $V_{jk}=\k v_1\oplus\k v_2$, $s\in\overline{1,N}$,  $p\in\Bbb{Z}_2$, 
        \begin{align*}
        w_1&=v_1\boxtimes x_{11}^{2s}+(-1)^p\omega^{jN-4kn}v_2\boxtimes x_{12}^{2s},\\ 
        w_2&=v_2\boxtimes x_{11}^{2s}+(-1)^p\omega^{4kn-jN} v_1\boxtimes x_{12}^{2s}.
       \end{align*}

\item  $\mathscr{G}_{jk,p}^{st}=\k w_1\oplus \k w_2$, where 
       $V_{jk}^\prime=\k v_1^\prime\oplus \k v_2^\prime$, 
       $s\in\overline{1,N}$, $t\in\overline{0,n-1}$, $p\in\Bbb{Z}_2$, 
        \begin{align*}
        w_1&=v_1^\prime\boxtimes x_{11}^{2s+1}\chi_{22}^{2t}
                   +\frac{(-1)^p\omega^{jN-4kn}}{\sqrt{\bar{\mu}}}
                   v_2^\prime\boxtimes x_{12}^{2s+1}\chi_{21}^{2t},\\
        w_2&=v_2^\prime\boxtimes x_{11}^{2s}\chi_{22}^{2t+1}
                   +(-1)^p\sqrt{\bar{\mu}}\omega^{4kn-jN}
                    v_1^\prime\boxtimes x_{12}^{2s}\chi_{21}^{2t+1}.
       \end{align*} 
  
\item $\mathscr{H}_{jk,p}^{st}=\k w_1\oplus \k w_2$ , where 
        $V_{jk}^\prime=\k v_1^\prime\oplus \k v_2^\prime$, 
        $s\in\overline{1,N}$, $t\in\overline{0,n-1}$, $p\in\Bbb{Z}_2$, 
        \begin{align*}
        w_1&=v_1^\prime\boxtimes x_{11}^{2s}\chi_{22}^{2t+1}
                    +\frac{(-1)^p\omega^{jN-4kn}}{\sqrt{\bar{\mu}}}
                     v_2^\prime\boxtimes x_{12}^{2s}\chi_{21}^{2t+1},\\
        w_2&=v_2^\prime\boxtimes x_{11}^{2s+1}\chi_{22}^{2t}
                    +(-1)^p\sqrt{\bar{\mu}}\omega^{4kn-jN}
                     v_1^\prime\boxtimes x_{12}^{2s+1}\chi_{21}^{2t}.
        \end{align*}
    
\item $\mathscr{P}_{ijk,p}^{st}=\k w_1\oplus \k w_2$, where $\lambda=1$, 
         $V_{ijk}^\prime=\k v$, 
         $s\in\overline{1,N}$, $t\in\overline{0,n-1}$, $p\in\Bbb{Z}_2$, 
         \begin{align*}
         w_1&=v\boxtimes \left[x_{11}^{2s+1}\chi_{22}^{2t}
                     +\frac{(-1)^p}{\sqrt{(-1)^{i+j}}}x_{12}^{2s+1}\chi_{21}^{2t}\right],\\
         w_2&=v\boxtimes \left[
                     x_{11}^{2s}\chi_{22}^{2t+1}+\sqrt{(-1)^{i+j}}(-1)^p x_{12}^{2s}\chi_{21}^{2t+1}\right]. 
         \end{align*}  
                              
\end{enumerate}

\subsection{$2n$-dimensional Yetter-Drinfeld modules over $A_{N\,2n}^{\mu\lambda}$}
\begin{enumerate}
\item Let  $V_{jk}=\k v_1\oplus\k v_2$, $s\in\overline{1,N}$, $p\in \Bbb{Z}_2$.  Denote
\begin{align*}
w_r&=\left\{\begin{array}{rl}
           \left[v_1+(-1)^p\omega^{2(jN-2kn)}v_2\right]\boxtimes x_{11}^{2s+1}, & r=1, \\
           \chi_{22}^{r-1}\cdot w_1,&r\,\,\text{even},\vspace{1mm}\\
           \chi_{11}^{r-1}\cdot w_1,&r\,\,\text{odd}, 
        \end{array}\right.\\
m_r&=\left\{\begin{array}{rl}
           \left[v_1+(-1)^p\omega^{2(jN-2kn)}v_2\right]\boxtimes x_{12}^{2s}x_{21},& r=1, \\
           \chi_{11}^{r-1}\cdot m_1,&r\,\,\text{even},\vspace{1mm}\\
           \chi_{22}^{r-1}\cdot m_1,&r\,\,\text{odd}, 
        \end{array}\right.
\end{align*}
then $\mathscr{I}_{pjk}^s=\bigoplus_{r=1}^{n}\left(\k w_r\oplus\k m_r\right)$ is a $2n$-dimensional Yetter-Drinfeld module over $A_{N\,2n}^{\mu\lambda}$ with the module structure given by 
\begin{align*}
x_{11}\cdot w_r&=\left\{\begin{array}{rl}
                           (-1)^p\omega^{4kn}w_1, & r=1,\\
                           w_{r+1},& r\,\,\text{even}, 1<r< n, \\
                           \omega^{8kn}w_{r-1},& r\,\,\text{odd}, 1<r\leq n, \\
                           (-1)^p\omega^{2n(2k+jN)}w_{n},& r=n\,\,\text{even}, 
                           \end{array}\right.\\
x_{22}\cdot w_r&=\left\{\begin{array}{rl}
                           \omega^{8kn}w_{r-1},& r\,\,\text{even}, 1<r\leq n, \\
                           w_{r+1},& r\,\,\text{odd}, 1\leq r<n, \\
                           (-1)^p\omega^{2n(2k+jN)}w_{n},& r=n\,\,\text{odd}, 
                           \end{array}\right.\\
x_{pq}\cdot w_r&=0, \quad pq=12\,\,\text{or}\,\,21, 1\leq r\leq n, \\
x_{11}\cdot m_r&=\left\{\begin{array}{rl}
                           \omega^{8kn}m_{r-1},& r\,\,\text{even}, 1<r\leq n, \\
                           m_{r+1},& r\,\,\text{odd}, 1\leq r<n, \\
                           \lambda(-1)^p\omega^{2n(2k+jN)}m_{n},& r=n\,\,\text{odd}, 
                           \end{array}\right.\\
x_{pq}\cdot m_r&=0, \quad pq=12\,\,\text{or}\,\,21, 1\leq r\leq n, \\
x_{22}\cdot m_r&=\left\{\begin{array}{rl}
                           (-1)^p\omega^{4kn}m_1, & r=1,\\
                           m_{r+1},& r\,\,\text{even}, 1<r< n, \\
                           \omega^{8kn}m_{r-1},& r\,\,\text{odd}, 1<r\leq n, \\
                           \lambda(-1)^p\omega^{2n(2k+jN)}m_{n},& r=n\,\,\text{even}, 
                           \end{array}\right.
\end{align*}
and the comodule structure given by
\begin{align*}
\rho(w_r)=\left\{\begin{array}{rl}
                   x_{11}^{2(s-r+1)}\chi_{22}^{2r-1}\otimes w_r
                   +x_{12}^{2(s-r+1)}\chi_{21}^{2r-1}\otimes m_r, 
                   & r\,\,\text{even},\vspace{1mm}\\
                 x_{11}^{2(s-r+1)}\chi_{11}^{2r-1}\otimes w_r
                             +x_{12}^{2(s-r+1)}\chi_{12}^{2r-1}\otimes m_r, & r\,\,\text{odd},\\
                 \end{array}\right.
\\
\rho(m_r)=\left\{\begin{array}{rl}
                 x_{11}^{2(s-r+1)}\chi_{11}^{2r-1}\otimes m_r
                 +x_{12}^{2(s-r+1)}\chi_{12}^{2r-1}\otimes w_r, & r\,\,\text{even},\vspace{1mm}\\
                 x_{11}^{2(s-r+1)}\chi_{22}^{2r-1}\otimes m_r
                 +x_{12}^{2(s-r+1)}\chi_{21}^{2r-1}\otimes w_r, & r\,\,\text{odd}.
                 \end{array}\right.
\end{align*}
\item Let  $V_{jk}^\prime=\k v_1^\prime\oplus \k v_2^\prime$, 
$s\in\overline{1,N}$, $p\in\Bbb{Z}_2$, and denote 
\begin{align*}
w_r
&=\left\{\begin{array}{rl} 
v_{1}^\prime\boxtimes \left[x_{11}^{2s}+(-1)^p x_{12}^{2s}\right], &r=1,\\
\chi_{12}^{r-1}\cdot w_{1}, & r\,\text{even and}\, 2\leq r\leq 2n,\vspace{1mm}\\
\chi_{21}^{r-1}\cdot w_{1}, & r\,\text{odd and }\, 1\leq r\leq 2n,
\end{array}\right.
\end{align*}
then $\mathscr{K}_{jk,p}^s=\bigoplus_{r=1}^{2n}w_r$ is a $2n$-dimensional Yetter-Drinfeld module over $A_{N\,2n}^{\mu\lambda}$ with the module structure given by 
\begin{align*}
x_{12}\cdot w_r
&=\left\{\begin{array}{rl}
         w_{r+1}, & r\,\text{odd and }\, 1\leq r<2n,\\
         \bar{\mu}\omega^{8kn} w_{r-1}, & r\,\text{even and }\, 2\leq r\leq 2n,\\
         \end{array}\right.\\
x_{21}\cdot w_r
&=\left\{\begin{array}{rl}
         \lambda\bar{\mu}^{1-n}\omega^{8kn-2n(4kn+jN)}w_{2n}, & r=1,\\
         \bar{\mu}\omega^{8kn} w_{r-1}, & r\,\text{odd and }\, 3\leq r<2n,\\
         w_{r+1}, & r\,\text{even and }\, 2\leq r< 2n,\\
         \lambda\bar{\mu}^{n}\omega^{2n(4kn+jN)}w_1, &r=2n,
         \end{array}\right.\\
x_{pq}\cdot w_r &=0, pq=11\,\text{or}\, 22, 1\leq r\leq 2n,
\end{align*}
and the comodule structure given by 
\begin{align*}
\rho(w_r)=\left\{\begin{array}{rl}
                 x_{11}^{2(s-r+1)}\chi_{11}^{2r-2}\otimes w_r
                 +\Omega_1 
                 x_{12}^{2(s-r+1)}\chi_{12}^{2r-2}\otimes w_{2n-r+2},
                 & 2\leq r\,\text{even},\vspace{1mm}\\
                 x_{11}^{2(s-r+1)}\chi_{22}^{2r-2}\otimes w_r
                 +\Omega_1
                 x_{12}^{2(s-r+1)}\chi_{21}^{2r-2}\otimes w_{2n-r+2}, 
                 &3\leq r\,\text{odd},\vspace{1mm}\\
                 \left[x_{11}^{2s}+(-1)^p x_{12}^{2s}\right]\otimes w_1, &r=1, 
                 \end{array}\right.
\end{align*}
where $\Omega_1=(-1)^p\lambda\bar{\mu}^{r-1-n}\omega^{8kn(r-1-n)-2jNn}$.
\item $\mathscr{M}_{ijk}^s=\bigoplus_{r=1}^{n}\left(\k w_r\oplus\k m_r\right)$, where 
     $s\in\overline{1,N}$, $r\in \overline{1,n}$ and $V_{ijk}=\k v$, 
      \begin{align*}
       w_r=\left\{\begin{array}{rl}
               v\boxtimes x_{11}^{2s+1}, &r=1,\vspace{1mm}\\
               \chi_{22}^{r-1}\cdot w_1, & r\,\,\text{even},\vspace{1mm}\\
               \chi_{11}^{r-1}\cdot w_1, & r\,\,\text{odd},\vspace{1mm}
               \end{array}\right.
       \quad
      m_r=\left\{\begin{array}{rl}
              v\boxtimes x_{12}^{2s}\chi_{21}, &r=1,\vspace{1mm}\\
              \chi_{11}^{r-1}\cdot m_1, & r\,\,\text{even},\vspace{1mm}\\
              \chi_{22}^{r-1}\cdot m_1, & r\,\,\text{odd}.\vspace{1mm}
              \end{array}\right.
      \end{align*}

\item  $\mathscr{N}_{ijk}^s=\bigoplus_{r=1}^{n}\left(\k w_r\oplus\k m_r\right)$, where 
$s\in\overline{1,N}$, $V_{ijk}=\k v$, 
\begin{align*}
w_r=\left\{\begin{array}{rl}
         v\boxtimes x_{12}^{2s+1}, &r=1,\vspace{1mm}\\
         \chi_{22}^{r-1}\cdot w_1, & r\,\,\text{even},\vspace{1mm}\\
         \chi_{11}^{r-1}\cdot w_1, & r\,\,\text{odd},\vspace{1mm}
         \end{array}\right.
\quad
m_r=\left\{\begin{array}{rl}
         v\boxtimes x_{11}^{2s}\chi_{22}, &r=1,\vspace{1mm}\\
         \chi_{11}^{r-1}\cdot m_1, & r\,\,\text{even},\vspace{1mm}\\
         \chi_{22}^{r-1}\cdot m_1, & r\,\,\text{odd}.\vspace{1mm}
         \end{array}\right.
\end{align*} 

\item  $\mathscr{J}_{pjk}^s=\bigoplus_{r=1}^{n}\left(\k w_r\oplus\k m_r\right)$, where 
$V_{jk}=\k v_1\oplus\k v_2$, $s\in\overline{1,N}$, $p\in\Bbb{Z}_2$, 
\begin{align*}
w_r&=\left\{\begin{array}{rl}
           \left[v_1+(-1)^p\omega^{-4kn}v_2\right]\boxtimes x_{12}^{2s+1}, &r=1, \vspace{1mm}\\
           \chi_{22}^{r-1}\cdot w_1,&r\,\,\text{even},\vspace{1mm}\\
           \chi_{11}^{r-1}\cdot w_1,&r\,\,\text{odd}, 
        \end{array}\right.\\
m_r&=\left\{\begin{array}{rl}
           \left[v_1+(-1)^p\omega^{-4kn}v_2\right]\boxtimes x_{11}^{2s}x_{22}, &r=1, \vspace{1mm}\\
           \chi_{11}^{r-1}\cdot m_1,&r\,\,\text{even},\vspace{1mm}\\
           \chi_{22}^{r-1}\cdot m_1,&r\,\,\text{odd}.
        \end{array}\right.
\end{align*}

\item $\mathscr{L}_{jk,p}^s=\bigoplus_{r=1}^{2n}\k w_r$, where 
$V_{jk}^\prime=\k v_1^\prime\oplus \k v_2^\prime$, 
$s\in\overline{1,N}$, $p\in \Bbb{Z}_2$, 
\begin{align*}
w_r
&=\left\{\begin{array}{rl} 
v_{2}^\prime\boxtimes \left[x_{11}^{2s}+(-1)^p x_{12}^{2s}\right], &r=1,\\
\chi_{12}^{r-1}\cdot w_{1}, & r\,\text{even and}\, 2\leq r\leq 2n,\vspace{1mm}\\
\chi_{21}^{r-1}\cdot w_{1}, & r\,\text{odd and }\, 1\leq r\leq 2n.
\end{array}\right.
\end{align*}

\item $\mathscr{Q}_{ijk,p}^s=\bigoplus_{r=1}^{2n}w_r$, where
$V_{ijk}^\prime=\k v$, 
$s\in\overline{1,N}$, $i,j,p\in\Bbb{Z}_2$, $k\in\overline{0,N-1}$,  
\begin{align*}
w_r
&=\left\{\begin{array}{rl} 
v\boxtimes \left[x_{11}^{2s}+(-1)^p x_{12}^{2s}\right], &r=1,\\
\chi_{12}^{r-1}\cdot w_{1}, & r\,\text{even and}\, 2\leq r\leq 2n,\vspace{1mm}\\
\chi_{21}^{r-1}\cdot w_{1}, & r\,\text{odd and }\, 1\leq r\leq 2n.
\end{array}\right.
\end{align*}
\end{enumerate}

\section*{Acknowledgements}
The author thanks  Prof. Nicol{\'a}s 
Andruskiewitsch for constructive comments  and Rongchuan Xiong for helpful discussions.

\end{document}